\numberwithin{equation}{section}
\newcommand{\ie}{{\em i.e. }}
\newcommand{\cf}{{\em cf.}\ }
\newcommand{\ul}[1]{\underline{#1}}
\newcommand{\ol}[1]{\overline{#1}}
\newtheorem{theorem}{Theorem}[section]
\newtheorem{lemma}[theorem]{Lemma}
\newtheorem{proposition}[theorem]{Proposition}
\newtheorem{corollary}[theorem]{Corollary}
\newtheorem{lemma and definition}[theorem]{Lemma and Definition}
\theoremstyle{definition}
\newtheorem*{theorem*}{Theorem}
\newtheorem{remark}[theorem]{Remark}
\newtheorem{example}[theorem]{Example}
\newtheorem{definition}[theorem]{Definition}
\newtheorem{assumption}[theorem]{Assumption}
\newtheorem{notation}[theorem]{Notation}
\newcommand{\opname}[1]{\operatorname{\mathsf{#1}}}
\renewcommand{\mod}{\opname{mod}\nolimits}
\newcommand{\proj}{\opname{proj}\nolimits}
\newcommand{\rad}{\opname{rad}\nolimits}
\newcommand{\Mod}{\opname{Mod}\nolimits}
\newcommand{\ind}{\opname{ind}\nolimits}
\newcommand{\add}{\opname{add}\nolimits}
\newcommand{\op}{^{op}}
\newcommand{\dimv}{\underline{\dim}\,}
\newcommand{\can}{\operatorname{can}\nolimits}
\def\Trop{\operatorname{Trop}}
\newcommand{\corb}{\ \widehat{\!\! /} }
\newcommand{\pretr}{\opname{pretr}\nolimits}
\newcommand{\ac}{\mathcal{A}c}
\newcommand{\eoe}{\ce \corb E}
\newcommand{\ccf}{\cc \corb F}
\newcommand{\eaf}{\ce \corb F}
\newcommand{\taf}{\widetilde{\ct} \corb F}
\newcommand{\ctt}{\widetilde{\ct}}
\newcommand{\eafast}{\ce \corb F_*}
\newcommand{\colim}{\opname{colim}}
\newcommand{\cok}{\opname{cok}\nolimits}
\newcommand{\res}{\opname{res}}
\newcommand{\Z}{\mathbb{Z}}
\newcommand{\T}{\mathbb{T}}
\renewcommand{\P}{\mathbb{P}}
\newcommand{\iso}{\stackrel{_\sim}{\rightarrow}}
\newcommand{\Hom}{\opname{Hom}}
\newcommand{\Ext}{\opname{Ext}}
\newcommand{\ten}{\otimes}
\newcommand{\lten}{\overset{\boldmath{L}}{\ten}}
\newcommand{\gpr}{\operatorname{gpr}\nolimits}
\newcommand{\ca}{{\mathcal A}}
\newcommand{\cb}{{\mathcal B}}
\newcommand{\cc}{{\mathcal C}}
\newcommand{\cd}{{\mathcal D}}
\newcommand{\ce}{{\mathcal E}}
\newcommand{\ch}{{\mathcal H}}
\newcommand{\cp}{{\mathcal P}}
\newcommand{\cR}{{\mathcal R}}
\newcommand{\cs}{{\mathcal S}}
\newcommand{\ct}{{\mathcal T}}
\newcommand{\cx}{{\mathcal X}}
\begin{document}
\title{A 2-Calabi-Yau realization of finite-type cluster algebras
 with universal coefficients\\
 }
\author{ALFREDO N\'AJERA CH\'AVEZ}
\address {CONACyT--Instituto de matemáticas UNAM\\
       Le\'on 2, altos, Oaxaca de Ju\'arez \\
                  68000 Oaxaca\\
                  M\'exico.}
\email {najera@matem.unam.mx}

\maketitle

\begin{abstract}
We categorify various finite-type cluster algebras with coefficients using completed orbit categories associated to Frobenius categories. Namely, the Frobenius categories we consider are the categories of finitely generated Gorenstein projective modules over the singular Nakajima category associated to a Dynkin diagram and their standard Frobenius quotients. In particular, we are able to categorify all finite-type skew-symmetric cluster algebras with universal coefficients and finite-type Grassmannian cluster algebras. Along the way, we classify the standard Frobenius models of a certain family of triangulated orbit categories which include all finite-type $n$-cluster categories, for all integers $n\geq 1$.
\end{abstract}

\section{Introduction}
In this article we pursue the representation-theoretic approach to categorification of cluster algebras that has been developed by many authors (see for example \cite{Amiot,BMRRT,Demonet Iyama, Demonet Luo,Fu Keller,GLS flag, JKS}). Here, we are able to categorify a new family of skew-symmetric finite-type cluster algebras with geometric coefficients and give a new description of the categories constructed in \cite{JKS} to categorify finite-type Grassmannian cluster algebras (note that in loc. cit. the authors categorify \emph{all} Grassmannian cluster algebras). Our main idea is to combine the techniques used to categorify acyclic cluster algebras with those used to categorify geometric cluster algebras.

Cluster categories were introduced in \cite{BMRRT} to study \emph{coefficient free} cluster algebras associated to acyclic quivers using representation theory. They are defined as follows. Let $k$ be an algebraically closed field and $Q$ a finite quiver without oriented cycles. The cluster category $\cc_Q$ is defined as the orbit category
\begin{equation*}
\cd^b(\mod (kQ)) / \Sigma \circ \tau^{-1},
\end{equation*}
where $\cd^b(\mod (kQ))$ is the bounded derived category of finitely generated right modules over the path algebra $kQ$ and $\Sigma \circ \tau^{-1}$ is the composition of the suspension functor $\Sigma$ and the inverse of the Auslander-Reiten translation $\tau^{-1}$. Some of the fundamental properties of cluster categories are as follows:
\begin{itemize}
\item they are $2$-Calabi-Yau triangulated categories (2-CY for short);
\item they Krull-Schmidt categories;
\item they have a cluster-tilting subcategory.
\end{itemize}
It was noticed in \cite{BIRS} that Frobenius categories can provide an adequate framework to categorify cluster algebras with geometric coefficients. This proposal has been successfully implemented by many authors. For instances we can mention the work of Fu and Keller \cite{Fu Keller}; Geiss-Leclerc-Schr\"oer's catgorification of the multi-homogenus coordinate ring of a partial flag variety \cite{GLS flag}; the categorification of Grassmannian cluster algebras obtained by Jensen, King and Su in \cite{JKS}; the recent generalization of \cite{GLS flag} and \cite{JKS} by Demonet and Iyama \cite{Demonet Iyama}; the categorification of cluster algebras associated to ice quivers with potentials arising from triangulated surfaces carried out by Demonet and Luo in \cite{Demonet Luo}, among many other works. 

In this paper we combine the approaches of \cite{BMRRT} and \cite{BIRS} to categoryfiy cluster algebras with geometric coefficients using orbit categories associated to Frobenius categories. To be more precise, let $\ce$ be a $k$-linear Frobenius category whose stable category is triangle equivalent to $\cd^b(\mod (kQ))$. Suppose that the autoequivalence $\Sigma \circ \tau^{-1}$ can be lifted to an exact autoequivalence $E:\ce \overset{\sim}\to \ce$. It follows from previous work of the author of this note \cite{Najera Frobenius orbit} that the \emph{completed orbit category} $ \eoe $ has the structure of a Frobenius category, whose stable category is triangle equivalent to the cluster category $\cc_Q$. It is reasonable to expect that the Frobenius category $\eoe$ can be used to construct a categorification of a geometric cluster algebra of type $Q$. The main purpose of this note is to show that this is indeed the case provided $Q$ is a Dynkin quiver and $\ce$ is a standard Frobenius model of $\cd^b(\mod (kQ))$ in the sense of \cite{Keller Scherotzke 1}, that is, $\ce$ is Hom-finite, Krull-Schmidt, and satisfies the following conditions:
\begin{itemize}
\item[(i)] For each indecomposable projective object $P$ of $\ce$, 
the $\ce$-module $\rad_\ce(?,P)$ and the $\ce^{op}$-module
$\rad_\ce(P,?)$ are finitely generated with simple tops.
\item[(ii)] $\ce$ is standard in the sense of Ringel \cite{Ringel84}, \ie its category of indecomposable objects is equivalent to
the mesh category of its Auslander--Reiten quiver. 
\end{itemize}
So let's assume from now on that $Q$ is an orientation of a Dynkin diagram $\Delta$ of type $A$, $D$ or $E$. In general, different Frobenius models will lead to the categorification of different cluster algebras of type $Q$ with coefficients. An important aspect of our work is that we are able to identify particular Frobenius models of $\cd^b(\mod (kQ))$ which provide a categorification of distinguished cluster algebras such as cluster algebras with universal coefficients and finite-type Grassmannian cluster algebras (the later happens when $\Delta$ is $D_4$, $E_6$, $E_8$ or of type $A$). 

The standard Frobenius models of $\cd^b(\mod (kQ))$ were classified by Keller and Scherotzke in \cite{Keller Scherotzke 1} using a representation-theoretic approach to Nakajima's quiver varieties. They are in bijection with certain subsets of the set of vertices of the repetition quiver $\Z Q$ and can be described as follows. Let $\cs$ be the singular Nakajima category (see \cite{Keller Scherotzke 1}) associated to $Q$ and $C \subset \Z Q_0$ an admissible configuration. Let $\cs_C$ be the quotient category of $\cs$ associated to $C$ constructed in \cite{Keller Scherotzke 1} (\cf \thref{admissible configuration}). Then the category of finitely generated Gorenstein projective $\cs_C$-modules  $\gpr (\cs_C)$ is a standard Frobenius model of $\cd^b(\mod (kQ))$. Moreover, every standard Frobenius model of $\cd^b(\mod (kQ))$ is equivalent as an exact category to $\gpr (\cs_C)$ for  some admissible configuration $C$. The main result of this paper is as follows:

\begin{theorem*}
Consider $\Z Q_0$ as the set of indecomposable objects of $\cd^b(\mod (kQ))$. Let $C$ be an admissible configuration of $\Z Q_0$ invariant under $\Sigma \circ \tau^{-1}$. Then $\Sigma \circ \tau^{-1}$ lifts to an exact automorphism $E: \gpr (\cs_C) \to \gpr(\cs_C)$ and the complete orbit category $ \gpr(\cs_C) \corb E$ is a 2-Calabi-Yau realization 
of a cluster algebra with geometric coefficients of type $\Delta$. Moreover, if $C =\Z Q_0$ $($\ie $\cs_C = \cs)$ then  $ \gpr(\cs)\corb E$ is a 2-Calabi-Yau realization of the cluster algebra with \emph{universal coefficients} of type $\Delta$. 
\end{theorem*}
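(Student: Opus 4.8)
The plan is to first assemble the categorical structure and then match the resulting combinatorics with the prescribed coefficient system. The first step is the lifting assertion. The autoequivalence $\Sigma \circ \tau^{-1}$ acts on the set $\Z Q_0$ of indecomposable objects of $\cd^b(\mod(kQ))$ as the combinatorial shift of the repetition quiver $\Z Q$, while the singular Nakajima category $\cs$ and its quotient $\cs_C$ are built functorially from the pair $(\Z Q, C)$. Since $C$ is assumed invariant under this shift, the induced permutation of $\Z Q_0$ preserves all the data defining $\cs_C$; it therefore induces an automorphism of $\cs_C$ and, by functoriality of the Gorenstein-projective construction, an exact automorphism $E$ of $\gpr(\cs_C)$ whose action on the stable category $\cd^b(\mod(kQ))$ is precisely $\Sigma \circ \tau^{-1}$.

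Granting the lift $E$, the next step invokes the main result of \cite{Najera Frobenius orbit}: the completed orbit category $\gpr(\cs_C)\corb E$ is a Frobenius category whose stable category is triangle equivalent to the cluster category $\cc_Q = \cd^b(\mod(kQ))/\Sigma\circ\tau^{-1}$. This stable category is $2$-Calabi--Yau and carries a cluster-tilting object $\ol{T}$, which lifts to a cluster-tilting object $T$ of $\gpr(\cs_C)\corb E$ by adjoining the indecomposable projective-injective objects. Applying the cluster-character formalism for stably $2$-Calabi--Yau Frobenius categories from \cite{Fu Keller} to the pair $(\gpr(\cs_C)\corb E,\, T)$ then yields a cluster algebra together with a cluster character realizing it; the mutable directions are governed by $\cc_Q$ and so reproduce the type-$\Delta$ exchange combinatorics, which reduces everything to reading off the coefficient system.

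The coefficients are recorded by the arrows of the Gabriel quiver of $\End(T)$ joining the frozen vertices, the indecomposable projective-injectives of $\gpr(\cs_C)\corb E$, to the mutable vertices. Since the indecomposable projectives of $\gpr(\cs_C)$ are indexed by $C$, the frozen vertices of the orbit category are indexed by the $\Sigma\circ\tau^{-1}$-orbits in $C$, and I would compute the coefficient rows of the extended exchange matrix directly from the mesh relations of the Auslander--Reiten quiver of $\cs_C$, verifying that they agree with those of a cluster algebra of geometric type $\Delta$. For the final assertion, the choice $C = \Z Q_0$ makes every indecomposable object of $\cc_Q$ contribute a frozen vertex, so the frozen part becomes indexed by the almost positive roots, equivalently by the $\mathbf{g}$-vectors of the cluster variables; the task is then to identify the computed coefficient matrix with the universal one, which in finite type is pinned down by the $\mathbf{g}$-vector fan.

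The main obstacle lies in this last identification. Building the Frobenius and $2$-Calabi--Yau structure is essentially formal once $E$ is produced, but matching the coefficient rows coming from the Nakajima combinatorics with the Fomin--Zelevinsky universal coefficient matrix requires an explicit grip on how the full configuration $C = \Z Q_0$ encodes $\mathbf{g}$-vectors. I expect to handle this by relating the projective-injective summands of $T$ to $\mathbf{g}$-vectors through the index map and the separation formulas, and then checking that the resulting extended matrix satisfies the defining universal property: every geometric coefficient system of type $\Delta$ arises from it by a unique coefficient specialization.
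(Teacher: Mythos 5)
Your structural skeleton coincides with the paper's: the lift $E=F_*$ is obtained exactly as you say (an $F$-invariant admissible configuration gives an automorphism of $\cs_C$, so $M\mapsto M\circ F^{-1}$ restricts to an exact automorphism of $\gpr(\cs_C)$); the Frobenius structure on the completed orbit category and the triangle equivalence of its stable category with $\cc_Q$ come from \cite{Najera Frobenius orbit} (though the paper spends a whole section constructing a dg lift of the stable equivalence $\phi^{-1}$ precisely so that the triangulated-hull hypotheses needed to invoke that result can be verified, a point you pass over silently); and part one of the theorem is then immediate from Theorem II.1.6 of \cite{BIRS} together with the definition of 2-CY realization from \cite{Fu Keller}.

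The gap is in the universal-coefficients step, which is the actual content of the theorem. You propose to index the frozen objects by $\mathbf{g}$-vectors via the index map and then to verify the defining universal property directly, i.e.\ that every geometric coefficient system of type $\Delta$ arises by a unique specialization. You correctly flag this as the main obstacle, but you leave it as a plan, and the plan aims at the wrong target: there is no need to re-prove universality. The paper's key move is that Fomin and Zelevinsky already proved (Theorem 12.4 of \cite{clusters 4}) that the seed with initial coefficients $y_j=\prod_{\alpha\in\Phi_{\geq -1}}p_\alpha^{\varepsilon(j)[\alpha:\alpha_j]}$ is universal; hence it suffices to compute the ice quiver of one explicit cluster-tilting object of $\gpr(\cs)\corb F_*$ and match it against this formula. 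Concretely, the paper labels the indecomposables of the orbit category by almost positive roots, takes $\widetilde{T}=\bigoplus_{i\in Q_0}X_{-\alpha_i}\oplus\bigoplus_{\alpha\in\Phi_{\geq -1}}P_\alpha$, and computes the exchange conflations: for $i$ a source one gets
$0\to X_{\alpha_i}\to\bigoplus_{\alpha\in\Phi_{>0}}P_\alpha^{[\alpha:\alpha_i]}\to X_{-\alpha_i}\to 0$,
where the multiplicities $[\alpha:\alpha_i]$ come out of the mesh conflations combined with the Bernstein--Gelfand--Ponomarev reflection-functor identity $[\tau_+(\alpha):\alpha_i]=\dim\Ext^1_{kQ}(M_\alpha,S(i))$. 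This computation, which is exactly what your sketch is missing, is what produces the root coordinates $[\alpha:\alpha_j]$ in the ice quiver and reduces the comparison with the Fomin--Zelevinsky formula to inspection. Without it (or without an actually executed version of your $\mathbf{g}$-vector-fan argument, which would in addition require Reading's characterization of universal geometric coefficients rather than anything in the paper's toolkit), the final assertion of the theorem remains unproven.
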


Cluster algebras with universal coefficients were introduced by Fomin and Zelevinsky in \cite{clusters 4} and further investigated by Reading in \cite{Reading universal, Reading surfaces,Reading Markov}. These are cluster algebras which are universal with respect to coefficient specialization among cluster algebras associated to a fixed initial quiver. The existence of universal coefficients for finite-type cluster algebras was proved in \cite{clusters 4}. In \cite{Reading universal}, it was shown that universal coefficients always exist if we restrict to the class of geometric cluster algebras and allow them to have an infinite number of coefficients (whose powers can be taken not only in the ring of integer numbers, but in more general rings such as the rational or real numbers). We hope that our approach can be generalized to categorify other cluster algebras with universal coefficients beyond finite-type.

Part of the technical aspects of our construction were carried out in \cite{Najera Frobenius orbit}, where we studied completed orbit categories associated to Frobenius categories in a more general framework. It is worth pointing out that we consider completed orbit categories rather than usual orbit categories because the Krull-Schmidt property in general is lost when we consider orbit categories, whereas it is always preserved for completed orbit categories.

We can use the insight of \cite{Keller Scherotzke 1} to classify the Frobenius models of $\cc_Q$ satisfying conditions (i) and (ii) using completed orbit categories associated to categories of the form $\gpr(\cs_C)$. Notice that we shall modify slightly the definition in \cite{Keller Scherotzke 1} and admit Frobenius models with infinite-dimensional morphism spaces. 
Moreover, we can classify the Frobenius models not only of cluster categories but of a larger class of orbit categories associated to $\cd^b(\mod(kQ))$. This classification problem was already addressed by Scherotzke in \cite[Section 3]{Scherotzke self-injective} using usual orbit categories. The following theorem proved in this note extends Theorem 3.7 of \cite{Scherotzke self-injective} to more general set-ups.

\begin{theorem*}
Let $E:\cd^b (\mod (kQ)) \to \cd^b (\mod (kQ))$ be a triangle equivalence such that $\cd^b(\mod (kQ))/E $ is $\Hom$-finite and equivalent to its triangulated hull (in the sense of \cite{Keller triang}). Let $C$ be an admissible configuration invariant under $E$. Suppose moreover that $E$ lifts to an exact autoequivalence $E_*:\gpr (\cs_C)\to \gpr(\cs_C)$, and that for each indecomposable object $X$ of $\gpr(\cs_C)$, the group $\gpr(\cs_C)(X, E_*^l(X))$ vanishes for all $l<0$. Then
\begin{itemize}
\item[$(a)$] The completed orbit category $\gpr(\cs_C) \corb E_{\ast}$ admits the structure of a Frobenius category whose stable category is triangle equivalent to $\cd_Q/E$.
\item[$(b)$] The category $\gpr(\cs_C)\corb E_{\ast}$ satisfies conditions (i) and (ii) above. 
\item[$(c)$] The map taking $C$ to $\gpr(\cs_C)$ induces a bijection from the set of $E$-invariant admissible configurations $C\subset \Z Q_0$ onto the set of equivalence classes of Frobenius models $\gpr(\cs_C)$ of $\cd_Q/E$
satisfying (i) and (ii).
\end{itemize}
\end{theorem*}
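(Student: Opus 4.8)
The plan for part $(a)$ is to invoke the general machinery developed in \cite{Najera Frobenius orbit}. Since $\gpr(\cs_C)$ is a standard Frobenius model of $\cd^b(\mod(kQ))$, its stable category is triangle equivalent to $\cd_Q$, and the hypothesis that $E$ lifts to the exact autoequivalence $E_*$ places us exactly in the setting of that paper. First I would check that the standing assumptions there are met: the Hom-finiteness of $\cd_Q/E$ together with its equivalence to its triangulated hull is precisely Keller's criterion \cite{Keller triang} guaranteeing that the orbit category is triangulated and coincides with the stable category we wish to produce, while the vanishing $\gpr(\cs_C)(X,E_*^l(X))=0$ for all $l<0$ and all indecomposable $X$ is the finiteness condition that makes the completed morphism spaces of $\gpr(\cs_C)\corb E_*$ well-behaved (the completion controls the $l\geq 0$ part, and the vanishing kills the negative tail). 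Granting these, the cited result endows $\gpr(\cs_C)\corb E_*$ with a Frobenius structure whose stable category is triangle equivalent to $\cd_Q/E$.

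For part $(b)$ I would exploit that the completed orbit category is Krull--Schmidt (the very reason for completing, as recalled in the introduction) and that its projective-injective objects are precisely the images of those of $\gpr(\cs_C)$. The key combinatorial input is that the Auslander--Reiten quiver of $\gpr(\cs_C)\corb E_*$ is the quotient of that of $\gpr(\cs_C)$ by the induced action of $E_*$. Condition (ii) then descends because the mesh category of a translation quiver passes to the mesh category of its quotient, so the indecomposables of $\gpr(\cs_C)\corb E_*$ again form the mesh category of their Auslander--Reiten quiver. Condition (i) follows similarly: for an indecomposable projective $P$ the modules $\rad(?,P)$ and $\rad(P,?)$ are finitely generated with simple tops over $\gpr(\cs_C)$, and the vanishing hypothesis ensures that passing to the orbit does not create morphisms into or out of $P$ in negative degree, so finite generation and the simple top survive in the quotient.

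Part $(c)$ is the heart of the statement and where I expect the real work to lie. The strategy is Galois-covering-theoretic and parallels \cite[Thm.~3.7]{Scherotzke self-injective}, using the Keller--Scherotzke bijection \cite{Keller Scherotzke 1} between admissible configurations $C$ and standard Frobenius models $\gpr(\cs_C)$ of $\cd^b(\mod(kQ))$ as the base case. Well-definedness of the map is supplied by $(a)$ and $(b)$. For injectivity, an equivalence $\gpr(\cs_C)\corb E_* \simeq \gpr(\cs_{C'})\corb E_*$ of Frobenius models lifts along the covering $\gpr(\cs_C)\to\gpr(\cs_C)\corb E_*$ to an $E_*$-equivariant equivalence $\gpr(\cs_C)\simeq\gpr(\cs_{C'})$, whence $C=C'$ by injectivity at the base. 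Surjectivity is the crux: given an arbitrary Frobenius model $\cF$ of $\cd_Q/E$ satisfying (i) and (ii), I would reconstruct its $\Z$-cover from the covering $\tilde\Gamma\to\Gamma$ of Auslander--Reiten quivers, form the associated mesh category $\tilde\cF$, and verify that $\tilde\cF$ is a standard Frobenius model of $\cd^b(\mod(kQ))$; the Keller--Scherotzke bijection then yields $C$ with $\tilde\cF\simeq\gpr(\cs_C)$, and the descended $E$-action forces $C$ to be $E$-invariant and $\cF\simeq\gpr(\cs_C)\corb E_*$. The main obstacle is precisely this reconstruction. One must show that the exact (Frobenius) structure, together with the projective-injectives and the simple-top condition (i), lifts correctly along the covering, and that $\tilde\cF$ has stable category $\cd_Q$ rather than some other orbit. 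Here condition (ii) is indispensable, as it lets one recover the entire category, exact structure included, from the combinatorics of the mesh category, while the vanishing hypothesis on negative Homs guarantees that the cover is a genuine $\Z$-cover with no unexpected identifications.
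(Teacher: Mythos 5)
Your part $(a)$ contains the genuine gap. You treat $(a)$ as a direct application of the machinery of \cite{Najera Frobenius orbit}, asserting that the hypotheses ``place us exactly in the setting of that paper.'' But the hypothesis of the theorem --- that $\cd_Q/E$ is $\Hom$-finite and equivalent to its triangulated hull --- is a statement about the orbit category of $\cd_Q$ taken with respect to the canonical dg lift $\widetilde{E} = {?\otimes_{kQ}P^{\bullet}M}$ on the enhancement $C^b(\proj(kQ))_{dg}$, whereas the general theorem on completed Frobenius orbit categories produces a Frobenius category whose stable category is (the triangulated hull of) $\ul{\gpr}(\cs_C)/\ul{E_*}$, formed with respect to a \emph{different} enhancement, namely $\ac(\cp)_{dg}$ equipped with the componentwise lift $\widetilde{E}_*$. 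A triangle equivalence $\ul{\gpr}(\cs_C)\simeq\cd_Q$ does not by itself transport triangulated hulls, nor the property of being triangulated: one must show that this equivalence is \emph{algebraic}, i.e. lifts to a quasi-functor between the two enhancements, and that this quasi-functor intertwines $\widetilde{E}_*$ and $\widetilde{E}$ up to isomorphism in $\text{Hqe}$. This is precisely the technical heart of the paper's proof: the inverse equivalence $\phi^{-1}$ is described explicitly as $\can\circ({?\lten_{kQ}X})$, is shown to admit a dg lift using Tabuada's universal property of the dg quotient (\thref{lift of phi}), and only then does \thref{a quasi-isomorphism} identify the dg orbit category $\ac(\cp)_{dg}/\widetilde{E}_*$ with $C^b(\proj kQ)_{dg}/\widetilde{E}$, so that the hypothesis on $\cd_Q/E$ can be brought to bear. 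Your proposal omits this step entirely, and without it the conclusion that the stable category of $\gpr(\cs_C)\corb E_*$ is triangle equivalent to $\cd_Q/E$ does not follow.

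For $(b)$ your outline agrees in spirit with the paper's proof: Krull--Schmidtness comes from the completion, the AR quiver of the orbit category is $\Z\widetilde{Q}_C/E$, and standardness is proved by decomposing a relation into its $\Z$-graded components, which are relations in $\gpr(\cs_C)$; the paper, however, replaces your appeal to ``mesh categories pass to quotients'' by an explicit check that the image of the almost split map $z^{\wedge}\to\sigma^{-1}(z)^{\wedge}$ remains the unique irreducible map out of the indecomposable projective, which is what actually establishes condition (i) downstairs. For $(c)$ you propose a Galois-covering reconstruction in the style of Scherotzke; the paper instead verifies that the configuration read off from the projective-injectives of a given model is admissible (via an inflation into an injective object) and then runs the Keller--Scherotzke argument verbatim in the orbit setting. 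Your route is plausible, but as you yourself concede the reconstruction of the $\Z$-cover is left as an unproved crux, so as written $(c)$ is a plan rather than a proof.
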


The main difference between \cite[Theorem 5.7]{Scherotzke self-injective} and the theorem above is that, by considering \emph{completed} orbit categories, we are able to consider functors $E:\cd^b (\mod (kQ)) \to \cd^b (\mod (kQ))$ and admissible configurations $C\subset \Z Q_0$ for which the usual orbit category $\gpr(\cs_C) / E_*$ is $\Hom$-infinite or fails to be Krull-Schmidt. 

This paper is organized as follows. In Section 2 we recall the definition and some of the fundamental properties of Nakajima categories. In Section 3 we state our main results and prove them in Sections 4 and 5. In Section 6 we present some examples.


\section{Recollections} 

In this note we will freely use the basic concepts in the theory of dg categories. The reader can refer to Section $2$ of \cite{Najera Frobenius orbit} for an account of the results on dg categories and their orbit categories that will be used here. Some of the standard references for this topic are \cite{Keller on dg} and \cite{Drinfeld}. Throughout this note all functors between $k$-linear categories are assumed to be $k$-linear. 

\subsection{Notation}
The set of morphisms between two objects $x$ and $y$ of a category $\cc$ is denoted by $\cc(x,y)$. If $k$ is a field and $\ce$ an additive $k$-category, a right $\cc$-module is by definition a $k$-linear functor $M:\cc^{\text{op}} \rightarrow \Mod (k)$, where $\Mod (k)$ is the category of $k$-vector spaces. We let $\Mod(\cc)$ be the category of all right $\cc$-modules, 
$\mod (\cc)$ its subcategory of finitely presented modules 
and  $\proj(\cc)$ the full subcategory of finitely generated projective $\cc$-modules. In particular, for each object $x$ of $\cc$, we have the finitely generated projective $\cc$-module
\begin{equation*}
x^{\wedge}:=\cc(?,x):\cc^{\rm op} \to \Mod (k),
\end{equation*}
and the finitely generated injective $\cc$-module 
\[
x^{\vee}:=D(\cc(x,?)):\cc^{\rm op} \to \Mod (k).
\]
Here, $\cc (y,z)$ denotes the space of morphisms from $y$ to $z$ in the category $\cc$ and $D$ is the duality over the ground field $k$. 
If the endomorphism ring of every indecomposable object $x$ of $\cc$ is local, then each projective $\cc$-module is a direct sum of modules of the form $x^{\wedge}$, therefore we will sometimes refer to $x^{\wedge}$ (resp. $x^{\vee}$) as the free (resp. co-free) $\cc$-module associated to $x$. 
Contrary to the notation used for the space of morphisms in $\cc$, the morphism space between two $\cc$-modules $L$ and $M$ is denoted by $\Hom_{\cc}(L,M)$ or simply $\Hom (L,M)$ when there is no risk of confusion.

\subsection{Nakajima categories} In this section we recall the definition and some of the properties of the Nakajima categories.

Let $Q$ be a quiver. Let $Q_0$ be its set of vertices and $Q_1$ be its set of arrows. We suppose that $Q$ is finite (both $Q_0$ and $Q_1$ are finite sets) and acyclic ($Q$ has no oriented cycles). The \emph{repetition quiver} (\cf \cite{Riedtmann80b}) 
$\Z Q$ is the quiver obtained from $Q$ as follows:
\begin{itemize}
\item the set of vertices of $\Z Q$ is $\Z Q_0= Q_0 \times \Z$.
\item For each arrow $\alpha : i\longrightarrow j$ of $Q$ and each $p\in \Z$, the repetition quiver $\Z Q$ has the arrows
\begin{equation*}
\xymatrix{
(\alpha,p):(i,p) \ar[r]  & (j,p) & \text{and} &  \sigma(\alpha, p):(j,p-1) \ar[r] & (i,p).
}
\end{equation*}
\item $\Z Q$ has no more arrows than those described above. 
\end{itemize}
Let $\sigma:\Z Q_1\to \Z Q_1$ be the bijection given by
\begin{equation*}
\sigma (\beta)=
\begin{cases}
\sigma(\alpha,p) & \text{if  } \beta=(\alpha,p), \\
(\alpha, p-1) & \text{if  } \beta=\sigma(\alpha,p).
\end{cases}
\end{equation*}
Let $\tau : \Z Q \to \Z Q$ be the graph automorphism given by \emph{the translation by one unit}:
\begin{equation*}
\xymatrix{
\tau(i,p)=(i,p-1)    & \text{and} & \tau(\beta)=\sigma^2(\beta) 
}
\end{equation*}
for each vertex $(i,p)$ and each arrow $\beta$ of $\Z Q$. 

Let $k$ be a field. Following \cite{Gabriel80} and \cite{Riedtmann80}, we define the {\em mesh category $k(\Z Q)$} to be the quotient of the path category $k\Z Q$ by the ideal generated by the mesh relators, \ie the $k$-category whose objects are the vertices of $\Z Q$ and whose morphism space from $a$ to $b$ is the space of all $k$-linear combinations of paths from $a$ to $b$ modulo the subspace spanned by all elements $u r_x v$, where $u$ and $v$ are paths and 
\[
r_x = \sum_{\beta: y \to x} \beta \sigma(\beta): \quad
\raisebox{1.225cm}{\xymatrix@R=0.5cm@C=0.5cm{  & y_1 \ar[dr]^{\beta_1} & \\
\tau(x) \ar[ur]^{\sigma(\beta_1)} \ar[dr]_{\sigma(\beta_s)}  & \vdots & x \\
 & y_s \ar[ur]_{\beta_s} & }}
\]
is the {\em mesh relator} associated with a vertex $x$ of $\Z Q$. Here the sum runs over all arrows $\beta: y \to x$ of $\Z Q$. Note that $\tau$ can be thought of as an autoequivalence of $\Z (kQ)$.

\begin{notation}
Let $kQ$ be the path algebra of $Q$ and let $\mod (kQ)$ be the category of all finite-dimensional right $kQ$-modules. We adopt the notation used in \cite{Keller Scherotzke 1} and denote the bounded derived category of $\mod (kQ)$ by $\cd_Q$. 
If $\cx$ is an additive category then we write $\ind(\cx)$ to denote its full subcategory 
of indecomposable objects. 
\end{notation}

\begin{theorem}[\cite{Happel derived}] 
\thlabel{Happel embedding}
There is a canonical fully faithful functor
\[
H: k(\Z Q) \to \ind(\cd_Q) 
\]
taking each vertex $(i,0)$ to the indecomposable
projective module associated to the vertex $i\in Q_0$. It is an equivalence if and only if
$Q$ is a Dynkin quiver (= an orientation of a Dynkin diagram of type ADE).
\end{theorem}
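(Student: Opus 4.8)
The plan is to identify both the source and the image of $H$ through Auslander--Reiten theory in $\cd_Q$ and then to compare morphism spaces by a knitting argument. First I would exploit that $kQ$ has global dimension at most one: every object of $\cd_Q$ is isomorphic to the direct sum of the shifts of its cohomology modules, so each indecomposable of $\cd_Q$ is of the form $\Sigma^n M$ for an indecomposable $kQ$-module $M$ and some $n\in\Z$. Next I would recall that $\cd_Q$ admits a Serre functor $\nu$ (the derived Nakayama functor), hence Auslander--Reiten triangles, with translation $\tau=\Sigma^{-1}\nu$. The key computation is that $\tau$ restricts to the classical Auslander--Reiten translation on the non-projective indecomposable modules, while $\tau(P_i)=\Sigma^{-1}I_i$ for the indecomposable projective $P_i$ and its corresponding injective $I_i$. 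This gluing identifies the transjective (preprojective--preinjective) component of the Auslander--Reiten quiver of $\cd_Q$, as a translation quiver, with $\Z Q$ via $(i,p)\mapsto \tau^{-p}P_i$, so that the graph automorphism $\tau$ of $\Z Q$ corresponds to the derived translation.

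With this identification in hand I would construct $H$ using the presentation of $k(\Z Q)$ as the path category $k\Z Q$ modulo the mesh ideal. On vertices I set $H(i,p)=\tau^{-p}P_i$, so that $H(i,0)=P_i$, and I send each arrow of $\Z Q$ to a component of the corresponding Auslander--Reiten triangle map, that is, to an irreducible morphism. To see that $H$ descends to the mesh category it suffices to check that each mesh relator is killed: for a vertex $x$ with Auslander--Reiten triangle
\[
\tau X \xrightarrow{\,u\,} \bigoplus_{\beta\colon y\to x} Y \xrightarrow{\,v\,} X \to \Sigma\tau X,
\]
the components of $u$ are the images of the $\sigma(\beta)$ and the components of $v$ are the images of the $\beta$, so $r_x=\sum_{\beta}\beta\,\sigma(\beta)$ is sent to $vu=0$.

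The technical heart, and the step I expect to be the main obstacle, is full faithfulness, that is, that $H$ induces isomorphisms $k(\Z Q)(a,b)\to\cd_Q(Ha,Hb)$. Since $H$ intertwines the graph automorphism $\tau$ with the Auslander--Reiten translation of $\cd_Q$, I can reduce to the case where the source $a=(i,0)$ maps to a projective $P_i$. Then $\cd_Q(P_i,\Sigma^n M)=\Ext^n_{kQ}(P_i,M)$ vanishes for $n\neq 0$ and equals $e_iM$ for $n=0$, which reproduces the classical fact that $\dim_k k(\Z Q)((i,0),x)$ equals the corresponding multiplicity in the dimension vector; this already matches the two Hom spaces dimension-wise. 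To upgrade equidimensionality to an isomorphism of the comparison map I would knit along the meshes of the transjective component: applying $\cd_Q(Ha,-)$ to the Auslander--Reiten triangles yields long exact sequences expressing $\cd_Q(Ha,X)$ recursively in terms of $\cd_Q(Ha,\tau X)$ and $\cd_Q(Ha,\bigoplus Y)$, and the mesh morphism spaces obey exactly the same recursion. The subtle point is the collapse of these long exact sequences to the mesh short exact sequences, which rests on the vanishing of the relevant outer $\Hom$ terms along the transjective component and on the standardness of that component; this is where the hereditary hypothesis is genuinely used.

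Finally, for the equivalence statement, if $Q$ is Dynkin then $\mod kQ$ is representation finite by Gabriel's theorem, the Auslander--Reiten quiver of $\cd_Q$ is connected and equal to $\Z Q=\Z\Delta$, and the description of the indecomposables above shows that $H$ is moreover dense; being fully faithful, it is then an equivalence onto $\ind(\cd_Q)$. Conversely, if $Q$ is not Dynkin there exist regular indecomposable $kQ$-modules, which lie outside the transjective component and hence outside the image of $H$; thus $H$ fails to be dense and cannot be an equivalence.
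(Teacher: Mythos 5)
The paper itself contains no proof of this statement: it is imported verbatim from Happel \cite{Happel derived}, so the only meaningful comparison is with the classical argument, and your sketch does reconstruct its standard skeleton (hereditary decomposition into shifts of modules, Serre functor and AR triangles with $\tau P_i\cong\Sigma^{-1}I_i$, identification of the component of the projectives with $\Z Q$, mesh relators killed because consecutive maps in an AR triangle compose to zero, knitting for full faithfulness, and density exactly in the Dynkin case).

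Two steps, however, need repair. First, well-definedness of $H$ on arrows: every arrow $\beta\colon y\to x$ of $\Z Q$ lies in \emph{two} meshes --- in the mesh at $x$ as an incoming arrow, and in the mesh at $\tau^{-1}y$ as $\sigma(\sigma^{-1}\beta)$ --- so you cannot send each arrow to ``a component of the corresponding AR triangle'' mesh by mesh; the irreducible morphisms must be chosen inductively (Riedtmann's knitting) so that \emph{all} meshes become AR triangles simultaneously, and only then does your computation $H(r_x)=vu=0$ make sense. Second, and more seriously, your justification of the collapse of the long exact sequences appeals to ``the standardness of that component.'' This is circular: standardness of the transjective component is precisely the assertion that $k(\Z Q)\to\ind(\cd_Q)$ is fully faithful, i.e.\ the statement being proved. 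What the induction actually requires is the directedness property of the component --- $\Hom_{\cd_Q}(Ha,Hx)=0$ unless there is a path from $a$ to $x$ in $\Z Q$ --- which is a classical fact about preprojective components and can be derived from the kind of computation you already set up ($\Hom_{\cd_Q}(P_i,\Sigma^nM)=\Ext^n_{kQ}(P_i,M)$, transported along $\tau$), not from standardness. With these two repairs your outline is the correct, standard proof, including both directions of the Dynkin criterion.
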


\begin{definition}
(\cite{Keller Scherotzke 1})
The \emph{framed quiver} $\widetilde{Q}$ associated to $Q$ is the quiver obtained from $Q$ by adding, for each vertex $i\in Q_0$, a new vertex $i'$ and a new arrow $i\to i'$. We consider the repetition quiver $\Z \widetilde{Q}$ and call  \emph{frozen vertices} its vertices of the form $(i',n)$, with $i\in Q_0$ and $n\in \Z$. The \emph{regular Nakajima category} $\cR$ associated to $Q$ is the quotient of the path category $k\Z \widetilde{Q}$ by the ideal generated by the mesh relators associated to the \emph{non-frozen vertices}. The \emph{singular Nakajima category} $\cs$ is the full subcategory of $\mathcal{R}$ whose objects are the frozen vertices. 
\end{definition} 

As shown in \cite{Happel derived}, $H$ identifies the autoequivalence $\tau: \Z (k Q) \to \Z (k Q)$ with the {\em Auslander-Reiten translation} of  $\cd_Q$, 
which we will also denote by $\tau$. For Dynkin quivers,
the combinatorial descriptions of $\nu$, $\Sigma$ and of the image of 
$\mod (kQ)$ in $\cd_Q$ are given in section~6.5 of
\cite{Gabriel80}. If $Q$ is Dynkin, let $\Sigma$ be the unique bijection of the vertices of $\Z Q$ such that
\begin{equation*}
H(\Sigma x)= \Sigma(H(x)).
\end{equation*}

\begin{remark}
\thlabel{identification}
We will systematically identify the vertices of $\Z Q$ with the objects of $\ind(\cd_Q)$ via the functor $H$.
So when we refer to a vertex of $\Z Q$ as an indecomposable object of $\cd_Q$ we will tacitly refer to the object $H(x)$.
\end{remark}

\begin{remark}
Note that $\Sigma:\Z Q_0 \to \Z Q_0$ extends to a bijection $\Z \widetilde{Q}_0 \to \Z \widetilde{Q}_0$ which will be denoted by $\Sigma$ by a slight abuse of notation. There is also a bijection $\sigma: \Z \widetilde{Q}_0\to \Z \widetilde{Q}_0$ given by $\sigma: (i,n)\mapsto (i',n-1)$ and $(i',n)\mapsto (i,n)$ for $i$ a vertex of $Q$ and $n$ an integer. 
\end{remark}

\begin{example}
Let $Q$ be the Dynkin quiver $1\to 2$. The quiver $\Z \widetilde{Q}$ is depicted in Figure \ref{cR type A_2} below. The frozen vertices are represented by small squares $\Box$. In this case the mesh relations imply that $ba+dc=0$ in the Nakajima category whereas $eb\neq 0$. 
\begin{figure}[htbp]
\[
\xymatrix{
\Box \ar[r] & \ar[r] \ar[rd] & \Box \ar[r] & \ar[r] \ar^d[rd] & \Box \ar[r] & \ar[r] \Sigma x \ar[rd] & \Box \\
\tau (x) \ar[r] \ar[ru] & \ar[r] \Box & \ar^a[r] x \ar^c[ru] & \Box \ar^b[r] & \ar^e[r] \ar[ru] &  \Box \ar[r] &
}
\]
\caption{The quiver of the regular Nakajima category associated to $A_2$.}
\label{cR type A_2}
\end{figure}
\end{example}

\begin{definition}
\thlabel{admissible configuration}
Let $C$ be a subset of $\Z Q_0$. Denote by $\cR_C$ the quotient of $\cR$ by the ideal generated by the identities of the frozen vertices not belonging to $\sigma^{-1}(C)$ and by $\cs_C$ its full subcategory formed by the vertices in $\sigma^{-1}(C)$. We call $C$ an \emph{admissible configuration of} $\Z Q_0$ if for each vertex $x \in \Z Q_0$, there is a vertex $c$ in $C$ such that the space of morphisms from $x$ to $c$ in the mesh category $k(\Z Q)$ does not vanish. Finally, let $\Z \widetilde{Q}_C$ be the quiver obtained from $\Z \widetilde{Q}$ by delating the set of frozen vertices $\sigma^{-1}(C)$.
\end{definition}

\subsection{Nakajima categories and derived categories}

A {\em Frobenius category} is a Quillen-exact category  \cite{Quillen} with enough injective objects, enough projective objects and where these two families of objects coincide. By definition,
every Frobenius category is endowed with a distinguished class of sequences
\[
0 \to L \to M \to N \to 0
\]
called \emph{conflations} (we follow the terminology of \cite{Gabriel Roiter}). We will call the morphism $L\to M$ of such a sequence an \emph{inflation} and the morphism $ M \to N $ a \emph{deflation}. Let $\ce$ be a Frobenius category. The \emph{stable category} $\ul{\ce}$ is the quotient of $\ce$ by the ideal of morphisms factoring through  a projective-injective object. It was shown by Happel \cite{Happel} that $\ul{\ce}$ has a canonical
structure of triangulated category. Note that it is possible to define the extension functors $\Ext^i_\ce$ for $\ce$ in the usual way. We have that 
\[
\Ext^i_\ce(L,M) \iso \Ext^i_{\ul{\ce}}(L,M)
\]
for all objects $L$ and $M$ of $\ce$ and all integers $i\geq 1$. Now we introduce a class of Frobenius categories that will be very important for our discussion.

\begin{definition}
\thlabel{gpr}
Let $\cc$ be an additive $k$-category. An $\cc$-module $M$ is \emph{finitely generated Gorenstein projective} if there is an acyclic complex
\begin{equation*}
P_M: \cdots \to P_1 \to P_0 \to P^{0} \to P^{1} \to \cdots
\end{equation*}
of objects in $\proj(\cc)$ such that $M \cong \cok (P_1\to P_0)$ and the complex $\Hom_{\cc}(P_M, P')$ is still acyclic for each module $P'$ in $\proj(\cc)$. Denote by $\gpr (\cc)$ the full subcategory of $\mod (\cc)$ formed by the Gorenstein projective modules. In the situation described above we call $P_M$ a \emph{complete projective resolution} of $M$.
\end{definition}
Every finitely generated projective $\cc$-module $P$ lies in $\gpr(\cc)$ since we may consider the complete projective resolution $\cdots \to 0 \to P \overset{\sim}{\longrightarrow}  P \to 0 \to \cdots$. The following result is a well-know result on Gorenstein projective modules (see \cite[Proposition 5.1]{AR}).
\begin{lemma}
\thlabel{gpr is Frobenius}
The category $\gpr(\cc)$ is an extension closed subcategory of $\Mod (\cc)$. Moreover, the induced exact structure on $\gpr (\cc)$ makes it a Frobenius category whose subcategory of projective--injective objects is $\proj(\cc)$.
\end{lemma}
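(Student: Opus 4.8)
The plan is to verify the two assertions separately: that $\gpr(\cc)$ is closed under extensions in the abelian category $\Mod(\cc)$, and that the resulting canonical exact structure is Frobenius with projective--injectives precisely $\proj(\cc)$. Throughout I would work inside $\Mod(\cc)$, which has enough projectives (the representables $x^{\wedge}$ are projective by Yoneda, and every module is a quotient of a coproduct of these), so that projective resolutions and the horseshoe lemma are at my disposal. The one homological fact I would record at the outset is that a Gorenstein projective $M$ satisfies $\Ext^i_\cc(M,P')=0$ for all $i\geq 1$ and all $P'\in\proj(\cc)$; this is immediate from the definition, since applying $\Hom_\cc(-,P')$ to the left half $\cdots\to P_1\to P_0\to M\to 0$ of a complete projective resolution $P_M$ computes these groups, and the full acyclicity of $\Hom_\cc(P_M,P')$ forces their vanishing.

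For extension-closedness I would take a short exact sequence $0\to L\to M\to N\to 0$ with $L,N\in\gpr(\cc)$ and build a complete projective resolution of $M$ out of those of $L$ and $N$. Applying the horseshoe lemma to the projective-resolution (left) halves and its dual to the coresolution (right) halves produces a complex $P_M$ of finitely generated projectives, with $P_M^{\,i}=P_L^{\,i}\oplus P_N^{\,i}$ in each degree, sitting in a short exact sequence of complexes $0\to P_L\to P_M\to P_N\to 0$ that is split in each degree. Acyclicity of $P_M$ then follows from that of $P_L$ and $P_N$. Since a degreewise-split short exact sequence of complexes stays short exact after applying $\Hom_\cc(-,P')$, I would conclude that $\Hom_\cc(P_M,P')$ sits in a short exact sequence of complexes whose outer terms are acyclic, hence is itself acyclic; this exhibits $M$ as Gorenstein projective. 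I expect this to be the main obstacle: one must set up the two-sided horseshoe carefully and keep track of the degreewise splitting precisely so that acyclicity can be transported across $\Hom_\cc(-,P')$.

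Being extension-closed in the abelian category $\Mod(\cc)$, the subcategory $\gpr(\cc)$ inherits the canonical exact structure in which the conflations are exactly the short exact sequences of $\Mod(\cc)$ with all three terms Gorenstein projective. Shifting a complete resolution one step in either direction shows that the syzygy $\Omega M=\cok(P_2\to P_1)$ and the cosyzygy $\Omega^{-1}M=\im(P^0\to P^1)$ are again Gorenstein projective, so every $M\in\gpr(\cc)$ admits conflations $0\to\Omega M\to P_0\to M\to 0$ and $0\to M\to P^0\to\Omega^{-1}M\to 0$ with $P_0,P^0\in\proj(\cc)$.

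Finally I would assemble the Frobenius structure. Each $P'\in\proj(\cc)$ is projective in $\gpr(\cc)$, being already projective in the ambient abelian category, and is also injective there: a conflation $0\to P'\to E\to N\to 0$ with $N\in\gpr(\cc)$ is classified by an element of $\Ext^1_\cc(N,P')=0$, so it splits. The two conflations of the previous paragraph then furnish enough projectives and enough injectives. Conversely, if $M\in\gpr(\cc)$ is projective the conflation $0\to\Omega M\to P_0\to M\to 0$ splits, exhibiting $M$ as a direct summand of $P_0$ and hence as an object of $\proj(\cc)$; dually, an injective $M$ is a summand of $P^0$ and again lies in $\proj(\cc)$. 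Therefore the projectives and the injectives of $\gpr(\cc)$ both coincide with $\proj(\cc)$, which is exactly the assertion that $\gpr(\cc)$ is Frobenius with projective--injective subcategory $\proj(\cc)$.
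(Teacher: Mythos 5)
Your argument is correct, but it cannot be measured against a proof in the paper, because the paper offers none: it records the lemma as a well-known fact and cites Proposition 5.1 of Auslander--Reiten's \emph{Applications of contravariantly finite subcategories}. Your self-contained proof is the standard one (in the Auslander--Bridger tradition), and its structure is right: the vanishing $\Ext^i_\cc(M,P')=0$ for $M$ Gorenstein projective and $P'\in\proj(\cc)$, the two-sided horseshoe for extension closure, stability of syzygies and cosyzygies under shifting complete resolutions, and the four inclusions identifying both the projectives and the injectives of the exact structure with $\proj(\cc)$. What the self-contained route buys is visibility of exactly where the defining condition ``$\Hom_\cc(P_M,P')$ acyclic'' is used; what the citation buys is brevity.

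The one step you must justify differently than stated is the ``dual'' horseshoe on the coresolution halves. The literal dual of the horseshoe lemma requires the terms of the coresolutions to be injective, whereas your $P^i_L$ and $P^i_N$ are projective, so injectivity is not available to extend the map $L\to P^0_L$ along the inflation $L\to M$. The extension exists for a different reason, namely the fact you recorded at the outset: applying $\Hom_\cc(-,P^0_L)$ to $0\to L\to M\to N\to 0$ yields the exact sequence $\Hom_\cc(M,P^0_L)\to\Hom_\cc(L,P^0_L)\to\Ext^1_\cc(N,P^0_L)$, and the last group vanishes precisely because $N$ is Gorenstein projective. (Note that this is where Gorenstein projectivity of $N$, and not merely of $L$, enters the right half of the construction.) Once a map $M\to P^0_L$ restricting correctly to $L$ exists, the snake lemma gives the degreewise split short exact sequence of first cosyzygies, these cosyzygies are again Gorenstein projective, and the induction proceeds. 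Everything else in your proposal --- transporting acyclicity across $\Hom_\cc(-,P')$ via degreewise splitness, and the verification of the Frobenius axioms together with the identification of the projective--injectives --- is complete and correct.
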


\begin{remark}
The Gorenstein injective modules are defined analogously, all the results in this note can be naturally adapted to be stated in terms of Gorenstein injective modules.
\end{remark}

The following theorem summarizes some of the results of \cite{Keller Scherotzke 1} that we shall need.

\begin{theorem} $($\cite{Keller Scherotzke 1}$)$
\thlabel{description of gpr}
Let $Q$ be a Dynkin quiver and $C$ an admissible configuration of $\Z Q$. Then 
\begin{itemize}
\item[$(i)$] the restriction functor 
\begin{equation*}
\res: \Mod (\cR_C)\to \Mod (\cs_C)
\end{equation*}
induces an equivalence between the full subcategory of finitely generated projective $\cR_C$-modules $\proj (\cR_C)$ and the category $\gpr(\cs_C)$
\begin{equation*}
\res:\proj(\cR_C)\overset{\sim}{\to} \gpr(\cs_C). 
\end{equation*}
In particular, it induces an isomorphism of $\Z \widetilde{Q}_C$ onto the Auslander-Reiten quiver of $\gpr(\cs_C)$ so that the vertices of $\sigma^{-1}(C)$ correspond to the projective--injective objects;
\item[$(ii)$] there is a $\delta$-functor $\Phi:\mod (\cs_C)\to \cd_Q$ defined as the composition
\begin{equation*}
\mod(\cs_C)\overset{\Omega}{\longrightarrow} \ul{\gpr}(\cs_C) \overset{\phi}{\longrightarrow} \cd_Q,
\end{equation*}
where $\Omega$ is the syzygy functor and $\phi$ is a distinguished triangle equivalence.
\end{itemize}
\end{theorem}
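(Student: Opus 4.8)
This theorem is due to Keller and Scherotzke; I sketch the proof one would give. The plan is to treat (i) by comparing projective $\cR_C$-modules with complete projective resolutions over $\cs_C$, and then to deduce (ii) from the resulting description of the Auslander--Reiten quiver together with Happel's theorem (\thref{Happel embedding}).

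For (i), first observe that $\cs_C$ is by construction a full subcategory of $\cR_C$ (its objects being the frozen vertices in $\sigma^{-1}(C)$), so $\res:\Mod(\cR_C)\to\Mod(\cs_C)$ is honest restriction. Since the endomorphism ring of each indecomposable of $\cR_C$ is local, every projective $\cR_C$-module is a sum of free modules $x^{\wedge}=\cR_C(?,x)$, and it is enough to analyse $\res(x^{\wedge})$. The crucial structural input is that $\cR_C$ is Gorenstein: using the mesh relations together with the $\tau$-periodic Auslander--Reiten structure of $\Z\widetilde{Q}$, each $x^{\wedge}$ admits a finite projective resolution and a finite projective coresolution built from \emph{frozen} free modules, i.e. from objects of $\proj(\cs_C)$ after restriction. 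Splicing these produces an acyclic complex $P_x$ of modules in $\proj(\cs_C)$ with $\res(x^{\wedge})\cong\cok(P_1\to P_0)$, and the fact that it remains acyclic after applying $\Hom_{\cs_C}(-,P')$ for $P'\in\proj(\cs_C)$ is precisely the statement that the frozen objects detect morphisms in the mesh category. Hence $\res(x^{\wedge})\in\gpr(\cs_C)$.

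Next I would show that $\res:\proj(\cR_C)\to\gpr(\cs_C)$ is an equivalence. By Yoneda, full faithfulness reduces to bijectivity of $\cR_C(x,y)\to\Hom_{\cs_C}(\res(x^{\wedge}),\res(y^{\wedge}))$; this is where admissibility of $C$ enters, guaranteeing that every vertex maps nontrivially into some object of $\sigma^{-1}(C)$, so that no morphism is lost on restriction. For essential surjectivity one matches the indecomposable Gorenstein projective $\cs_C$-modules with the indecomposable projective $\cR_C$-modules through Auslander--Reiten theory, the complete projective resolution over $\cs_C$ being reconstructed from the projective resolution over $\cR_C$. The AR-quiver claim then follows formally: an additive equivalence transports the AR quiver of $\proj(\cR_C)$, namely $\Z\widetilde{Q}_C$, isomorphically onto that of $\gpr(\cs_C)$, and the frozen free modules $c^{\wedge}$ with $c\in\sigma^{-1}(C)$ restrict to the projective $\cs_C$-modules, which by \thref{gpr is Frobenius} are exactly the projective--injective objects of $\gpr(\cs_C)$.

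For (ii), the syzygy functor $\Omega$ is well defined because every finitely presented $\cs_C$-module has a projective cover with Gorenstein projective kernel, and it is a $\delta$-functor: a short exact sequence of modules yields, via a horseshoe comparison of presentations, a triangle between the syzygies in the stable category $\ul{\gpr}(\cs_C)$. It remains to produce a triangle equivalence $\phi:\ul{\gpr}(\cs_C)\iso\cd_Q$. Deleting the projective--injective (frozen) vertices from the AR quiver $\Z\widetilde{Q}_C$ of $\gpr(\cs_C)$ leaves the stable AR quiver $\Z Q$, which by \thref{Happel embedding} is the AR quiver of $\cd_Q$. Since both $\ul{\gpr}(\cs_C)$ and $\cd_Q$ are standard, this isomorphism of quivers lifts to a $k$-linear equivalence on indecomposable objects; one upgrades it to a triangle equivalence by checking compatibility with the suspension, realized on both sides by the combinatorial bijection $\Sigma$, and with Auslander--Reiten triangles. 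Composing, $\Phi=\phi\circ\Omega$ is the asserted $\delta$-functor.

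The main obstacle is the content of the first two paragraphs: proving that restriction both lands in and surjects onto $\gpr(\cs_C)$. This forces one to construct the complete projective resolutions explicitly and to verify the two acyclicity conditions, which rest on genuine control of the mesh relations and of the Gorenstein symmetry of $\cR_C$ rather than on formal arguments. A secondary difficulty is upgrading the combinatorial isomorphism of AR quivers in (ii) to the \emph{triangle} equivalence $\phi$: standardness supplies the underlying additive equivalence, but compatibility with the triangulated structure must be argued separately, e.g. by appealing to the uniqueness of the triangulated hull of the mesh category or by transporting Happel's equivalence through part (i).
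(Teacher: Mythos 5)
You should first note a point of comparison: this theorem is not proved in the paper at all --- it is imported from Keller--Scherotzke \cite{Keller Scherotzke 1}, and the only internally relevant material is Section 4.1, where the inverse equivalence $\phi^{-1}$ is described explicitly (with the remark that the proof that it is a triangle equivalence is in Section 5 of \cite{Keller Scherotzke 1}). So your proposal must be measured against that description and the source's argument rather than against an internal proof. For part (i), your sketch correctly locates the hard points (the Gorenstein-type structure of $\cR_C$, fullness of restriction, essential surjectivity), though it asserts rather than proves them; as a sketch this is defensible, since these are exactly the computations carried out in \cite{Keller Scherotzke 1}.

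The genuine gap is in (ii). You construct $\phi$ by lifting the isomorphism of AR quivers to an additive equivalence via standardness, and then propose to upgrade it to a triangle equivalence by ``checking compatibility with the suspension and with AR triangles.'' No such upgrade exists in general: an additive equivalence commuting with the suspension, even one preserving AR structure, need not send distinguished triangles to distinguished triangles, and a triangulated structure is not recoverable from the AR quiver together with standardness --- this is precisely why the equivalence $\ul{\gpr}(\cs_C)\simeq\cd_Q$ is a theorem and not a formality. Your fallback suggestions do not close the hole: ``uniqueness of the triangulated hull of the mesh category'' is not a result available here, and ``transporting Happel's equivalence through part (i)'' does not typecheck, since (i) is an equivalence of exact categories $\proj(\cR_C)\simeq\gpr(\cs_C)$ and says nothing about the triangulated structure of the stable category. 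The route actually taken goes in the opposite direction and is triangulated by construction: one defines $\phi^{-1}=\can\circ(?\lten_{kQ}X):\cd_Q\to\ul{\gpr}(\cs_C)$, where $X$ is the $kQ$-$\cs_C$-bimodule with $X(u,x)=\Hom(u^{\wedge},\res(x^{\wedge}))$, the derived tensor product lands in $\cd^b(\gpr(\cs_C))$, and $\can$ is the canonical triangle functor from the bounded derived category of a Frobenius category to its stable category; both constituents are triangle functors, so the issue you defer never arises, and what remains is to prove this triangle functor is an equivalence, as done in \cite{Keller Scherotzke 1}. (The present paper then strengthens this by lifting $\phi^{-1}$ to a dg quasi-functor in \thref{lift of phi}, which is what the orbit-category arguments later require.)
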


\begin{remark}
\thlabel{indecomposables of gpr}
In view of part $(i)$ of \thref{description of gpr}, 
the indecomposable objects of $\gpr(\cs_C)$ are of the form 
$\res(x^{\wedge})$, for a vertex $x$ of $\Z \widetilde{Q}_C$. 
If $y$ is a frozen vertex of $\Z \widetilde{Q}_C$, then the projective $\cs_C$-module $y^{\wedge}\in \gpr(\cs_C)$ is identified with $\res(y^{\wedge})$. For this reason, we denote the indecomposable objects of $\gpr(\cs_C)$ by $x^{\wedge}$, for some vertex $x$ of $\Z \widetilde{Q}_C$. 
\end{remark}

\begin{assumption}
From now on we suppose that $Q$ is an orientation of a connected and simply laced Dynkin diagram.
\end{assumption}

\subsection{Standard Frobenius models of $\cd_Q$}
\label{section Frobenius models}

We suppose for the rest of the note that $k$ is algebraically closed. A {\em Frobenius model of $\cd_Q$} is
a Frobenius category $\ce$ together with a triangle equivalence
$\cd_Q \cong \ul{\ce}$. \thref{description of gpr} implies that we can construct Frobenius models of $\cd_Q$ from admisible configurations. Indeed, if $C \subset \Z Q_0$ is admissible, then the category $\gpr(\cs_C)$ is a Frobenius model of $\cd_Q$. Keller and Sharotzke proved that the Frobenius categories obtained in this way are precisely the class of Frobenius categories $\ce$ satisfying the following conditions:
\begin{itemize}
\item [(P0)] $\ce$ is $k$-linear, $\Ext$-finite and Krull-Schmidt (see \cite{Krause Krull-Schmidt}).
\item[(P1)] For each indecomposable non-projective object $X$ of $\ce$, there is an almost split sequence starting and an almost split sequence ending at $X$.
\item[(P2)] For each indecomposable projective object $P$ of $\ce$, 
the $\ce$-module $\rad_\ce(?,P)$ and the $\ce^{op}$-module
$\rad_\ce(P,?)$ are finitely generated with simple tops.
\item[(P3)] $\ce$ is standard, i.e. its category of indecomposables is equivalent to
the mesh category of its Auslander--Reiten quiver
(\cf~section~2.3, page~63 of \cite{Ringel84}).
\end{itemize}

\begin{theorem} $($\cite[Corollary 5.25]{Keller Scherotzke 1}$)$
\thlabel{Frobenius models}
The map taking $C$ to $\gpr(\cs_C)$ induces a bijection from the
set of admissible configurations $C\subset \Z Q_0$ onto the set of
equivalence classes of Frobenius models $\ce$ of $\cd_Q$
satisfying $(P0)-(P3)$. The inverse bijection sends a Frobenius
model $\ce$ to the set $C\subset \Z Q_0$ such that the indecomposable
projectives of $\ce$ correspond to the vertices $\sigma^{-1}(c)$,
$c\in C$, of the Auslander--Reiten quiver of $\ce$.
\end{theorem}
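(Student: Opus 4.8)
The plan is to exhibit the two maps as mutually inverse bijections by exploiting that, under (P0)--(P3), a Frobenius model of $\cd_Q$ is rigidly determined by its Auslander--Reiten quiver. For the forward direction I would start from the fact, already recorded in \thref{description of gpr}, that for an admissible $C$ the category $\gpr(\cs_C)$ is a Frobenius model of $\cd_Q$ (its exact structure being the canonical one of \thref{gpr is Frobenius}) whose AR quiver is $\Z\widetilde{Q}_C$ with projective--injectives indexed by $\sigma^{-1}(C)$. It then remains to verify (P0)--(P3). Conditions (P0) and (P1) are structural: Krull--Schmidtness and $\Ext$-finiteness transfer from the equivalence $\gpr(\cs_C)\cong\proj(\cR_C)$ and from Hom-finiteness of the stable category $\cd_Q$ in Dynkin type, while almost split sequences exist because $\cd_Q$ has AR triangles (Serre duality) and $\gpr(\cs_C)$ is Frobenius. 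Conditions (P2) and (P3) are read off the very definition of $\cR_C$ as a path category modulo mesh relators: this description forces the radicals of the projectives to have simple tops and identifies the category of indecomposables with the mesh category of $\Z\widetilde{Q}_C$.

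For the inverse direction, given any Frobenius model $\ce$ satisfying (P0)--(P3), I would read off a configuration. Transporting along the triangle equivalence $\cd_Q\cong\ul{\ce}$ and \thref{Happel embedding}, the stable AR quiver of $\ce$ is identified with $\Z Q=k(\Z Q)$. The essential structural step is to prove that the full AR quiver $\Gamma$ of $\ce$ is a \emph{framing} of $\Z Q$: the projective--injective indecomposables appear as extra ``frozen'' vertices, and (P2) pins down their attachment, since the simple top of each $\rad_\ce(?,P)$ allows only a single mesh relation at $P$. This forces the frozen vertices to occupy the positions $\sigma^{-1}(c)$ for a uniquely determined subset $C\subset\Z Q_0$, and I would define the inverse assignment by $\ce\mapsto C$.

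Two verifications then close the argument. First, $C$ is admissible: as $\ce$ has enough projectives, every indecomposable is a quotient of a projective, so for each $x\in\Z Q_0$ there is a nonzero morphism from $x$ into some projective sitting at $\sigma^{-1}(c)$; translating this through the combinatorics of $\sigma$ and the mesh description yields exactly the non-vanishing of $k(\Z Q)(x,c)$, which is admissibility in the sense of \thref{admissible configuration}. Second, $\ce\cong\gpr(\cs_C)$ as Frobenius models: by standardness (P3), $\ind(\ce)$ is equivalent to the mesh category $k\Gamma=k\Z\widetilde{Q}_C$ modulo mesh relators, which by \thref{description of gpr}$(i)$ and \thref{indecomposables of gpr} is precisely $\ind(\gpr(\cs_C))$. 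I would then upgrade this additive equivalence to an exact one by matching conflations---on both sides the exact structure is characterised by declaring the frozen objects projective--injective and the meshes the almost split sequences---and check compatibility with the stabilisations to $\cd_Q$. That the two maps are mutually inverse is then automatic: starting from $C$ the frozen vertices of $\Z\widetilde{Q}_C$ are $\sigma^{-1}(C)$, so reading them back recovers $C$; injectivity of $C\mapsto\gpr(\cs_C)$ follows because distinct configurations produce AR quivers with distinct frozen-vertex sets.

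The main obstacle I anticipate is the structural step of the second and third paragraphs: proving that (P2) together with standardness (P3) \emph{force} the AR quiver of an abstract Frobenius model to be exactly a framing $\Z\widetilde{Q}_C$ of $\Z Q$---that is, that the projective--injectives can only sit at frozen positions and can only attach through a single mesh---and then promoting the mesh-category identification of indecomposables to a genuine exact equivalence compatible with the given triangle equivalence $\cd_Q\cong\ul{\ce}$. A secondary subtlety, which must be fixed at the outset, is the precise meaning of \emph{equivalence of Frobenius models}: the equivalences should be required to respect the structural triangle equivalence to $\cd_Q$, so that $C$ is well defined as an honest subset of $\Z Q_0$ rather than merely up to the autoequivalences (such as $\tau$, $\Sigma$, or graph symmetries) permuting $\Z Q$.
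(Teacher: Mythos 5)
First, a point of reference: the paper does not prove this theorem at all. It is imported verbatim from Keller--Scherotzke \cite[Corollary 5.25]{Keller Scherotzke 1} (indeed, the proof of \thref{Frobenius model orbit}$(iii)$ later defers to ``the proof of \thref{Frobenius models} given in \cite{Keller Scherotzke 1}''). So there is no internal proof to measure your argument against; the relevant comparison is with that reference, whose broad strategy your outline does track: the forward direction amounts to \thref{description of gpr} together with the verification of (P0)--(P3), and the inverse direction reads $C$ off the positions of the projective--injective vertices in the Auslander--Reiten quiver of $\ce$.

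That said, as a proof your proposal has a genuine gap, and it is exactly the one you flag as the ``main obstacle'': all of the mathematical content of the theorem is concentrated in the step where (P0)--(P3) are shown to \emph{force} the AR quiver of an abstract Frobenius model $\ce$ to be a framing $\Z \widetilde{Q}_C$ of $\Z Q$ (each projective--injective inserted into a unique mesh, at a position of the form $\sigma^{-1}(c)$), and in the subsequent upgrade of the additive equivalence $\ind(\ce)\simeq\ind(\gpr(\cs_C))$ furnished by standardness (P3) to an equivalence of \emph{exact} categories compatible with the stable equivalences to $\cd_Q$. Declaring that ``frozen objects are projective--injective and meshes are almost split'' does not by itself characterise an exact structure; one must prove that every conflation of $\ce$ is detected by this data, which is where the real work in the reference lies. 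Since you defer precisely these points, what you have is a correct plan rather than a proof. Two smaller issues: (a) your admissibility argument begins ``every indecomposable is a quotient of a projective,'' which produces morphisms \emph{out of} projectives, not the morphism $x\to$ projective you then invoke; what you need is an inflation $x\to I$ into an injective ($=$ projective) object, i.e.\ the enough-injectives half of the Frobenius structure, followed by the identification of $\cR_C(x,\sigma^{-1}(c))$ with $k(\Z Q)(x,c)$, which itself requires an argument since morphisms in $\cR_C$ may a priori pass through other frozen vertices; (b) your ``secondary subtlety'' about the meaning of equivalence of Frobenius models is a genuine point, resolved by taking a Frobenius model to be a pair consisting of $\ce$ together with the triangle equivalence $\cd_Q\simeq\ul{\ce}$, exactly as you suggest.
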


\subsection{Completed orbit categories}
Orbit categories and their completions will be fundamental in what follows. We recall now their definitions and useful properties. Let $\cc$ be a $k$-linear category and $F: \cc \to \cc$ an automorphism. By definition, the orbit category $\cc/F$ has the same objects as $\cc$, the set of morphisms from an object $X$ to an object $Y$ is given by 
\begin{equation*}
\label{usual orbit category}
\cc / F(X,Y)= \bigoplus_{l\in \Z} \cc(X,F^l(Y)).
\end{equation*}	
The composition of morphisms is given by the formula
\begin{equation}
\label{composition}
(f_a)\circ(g_b)=\left(\sum_{a+b=c}F^b(f_a)\circ g_b\right),
\end{equation} 
where $f_a:Y\to F^a(Z)$, $g_b:X\to F^b(Y)$ and $a,b\in \Z$. 
Clearly $\cc/F$ is still a $k$-linear category and the canonical projection $p:\cc \to \cc/F$ is an additive functor. Suppose that for all objects $X$, $Y$ of $\cc$, the space $\cc (X,F^l (Y))$ vanishes for all integers $l\ll 0$. In his case we can define the \emph{completed orbit category} $\ccf$ as the category whose objects are the same as those of $\cc$ and with morphism spaces
\begin{equation}
\cc \corb F(X,Y)= \prod_{l\in \Z} \cc(X,F^l(Y)).
\end{equation}
The vanishing condition imposed on the spaces $\cc (X,F^l (Y))$ implies thst (\ref{composition}) defines a composition of morphisms in $\ccf$. Clearly, the category $\cc \corb F$ is $k$-linear. We still denote the natural projection $ \cc \to \ccf$ by $p$.

\begin{assumption}
\thlabel{remark on completed orbit categories}
Whenever we refer to the completed orbit category associated to an automorphism $F:\cc\to \cc$ we will implicitly assume for all objects $X$, $Y$ of $\cc$, the space $\cc (X,F^l (Y))$ vanishes for all integers $l\ll 0$.
\end{assumption}

\begin{remark}
Using a standard procedure, we can replace a category with an autoequivalence by a category with an automorphism (see Section 7 of \cite{Asashiba 2}). So we will consider orbit categories associated to categories equipped with an autoequivalence.
\end{remark}

\begin{remark}
\thlabel{indecomposables orbit}
Each indecomposable object of $\ccf$ is the image of an indecomposable object of $\cc$ under $p$.
\end{remark}

Let $\cb$ be a dg category endowed with an endomorphism $F:\cb\to \cb$ inducing an equivalence $H^0(F):H^0(\cb)\to H^0(\cb)$. We define the \emph{dg orbit category} $\cb/F$ as follows: the objects of $\cb/F$ are the same as the objects of $\cb$. For $X, Y\in \cb/F$, we have
\begin{equation}
\cb / F(X,Y):= \colim_p\bigoplus_{n \geq 0}\cb (F^n(X),F^p(Y)),
\end{equation}
where the transition maps are given by $F$

\begin{equation*}
\xymatrix{
\displaystyle{\bigoplus_{n\geq 0}}\cb (F^n(X),F^p(Y))\ar^{F}[r] & \displaystyle{\bigoplus_{n\geq 0}}\cb (F^n(X),F^{p+1}(Y)).
}
\end{equation*}

\begin{definition}
\thlabel{triangulated hull}
Let $\ct$ be a triangulated category endowed with a triangulated equivalence $F:\ct \to \ct $. Suppose that $\ct_{dg}$ is a dg enhancement of $\ct$ and that $\widetilde{F}:\ct_{dg} \to \ct_{dg}$ is a dg functor such that $H^0(\widetilde{F})=F$.
The triangulated hull of $\ct/F$ (with respect to $\widetilde{F}$) is the triangulated category $H^0(\pretr({\ct_{dg}/\widetilde{F}}))$, 
where $\pretr({\ct_{dg}/\widetilde{F}})$ is the pretriangulated hull of $\ct_{dg}/\widetilde{F}$. 
\end{definition}

\section{The main results}
\label{section main results}

This section is devoted to state our main results, their proofs will be given in the subsequent sections. Let $F:\cd_Q\to \cd_Q$ be a triangle equivalence which is isomorphic to the derived tensor product

\begin{equation*}
?\lten_{kQ}M:\cd_Q \to \cd_Q
\end{equation*} 
for some complex $M$ of $kQ$-bimodules. This is not a restriction since all autoequivalences with an "algebraic" construction are of this form (\cf Section 9 of \cite{Keller triang}). 

We can identify $F$ with an automorphism of the mesh category $k(\Z Q)$ via Happel's embedding (see \thref{Happel embedding}). A configuration $C \subset \Z Q_0$ is called $F$-invariant if $F(C)=C$. Note that $F$ can be extended to an automorphism of $\cR_C$ provided $C$ is admissible and $F$-invariant. In this case, we still denote by $F$ the extension of $F:k(\Z Q) \to k(\Z Q) $ to $\cR_C$. Let $F_*$ be the automorphism on $\Mod (\cR_C)$ defined by 
\begin{equation*}
F_{\ast}(M)= M \circ F^{-1}. 
\end{equation*}
In particular, $F_{\ast}(x^{\wedge})=F(x)^\wedge$ for every vertex $x$ of $\Z Q_C$, so $F_*$ restricts to an exact automorphism $F_*:\gpr(\cs_C) \to \gpr(\cs_C)$.

Consider the dg functor
\begin{equation}
\label{canonical dg lift of F}
\widetilde{F}:={?\otimes_{kQ} P^{\bullet} M}: C^b(\proj(kQ))_{dg} \to  C^b(\proj(kQ))_{dg}
\end{equation}
where $P^{\bullet} M$ is a projective resolution of the bimodule $M$. We have that $H^0(\widetilde{F})=F$.

\begin{theorem}
\thlabel{Frobenius model orbit}
Let $F:\cd_Q \to \cd_Q$ be a triangle equivalence such that $\cd_Q/F $ is $\Hom$-finite and equivalent to its triangulated hull. Let $C$ be an admissible configuration invariant under $F$. Suppose moreover that for each indecomposable object $X$ of $\gpr(\cs_C)$ the group $\Hom_{\cs_C}(X, F^l_*(X))$ vanishes for all $l<0$. Then
\begin{itemize}
\item[$(i)$] the completed orbit category $\gpr(\cs_C) \corb F_{\ast}$ admits the structure of a Frobenius category whose stable category is triangle equivalent to $\cd_Q/F$ $($see \thref{remark on completed orbit categories}$)$;
\item[$(ii)$] the category $\gpr(\cs_C) \corb F_*$ satisfies conditions $(P_0)-(P_3)$ of Section \ref{section Frobenius models} and its AR quiver is isomorphic to $\Z \widetilde{Q}_C/F$;
\item[$(iii)$] the map taking $C$ to $\gpr(\cs_C)$ induces a bijection from the set of $F$-invariant admissible configurations $C\subset \Z Q_0$ onto the set of equivalence classes of Frobenius models of $\cd_Q/F$
satisfying $(P0)-(P3)$.
\end{itemize}
\end{theorem}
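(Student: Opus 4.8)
The plan is to prove the three parts by feeding the situation into the general machinery for completed orbit categories of Frobenius categories developed in \cite{Najera Frobenius orbit}, and then relativizing the Keller--Scherotzke classification \thref{Frobenius models} along the quotient $\cd_Q\to\cd_Q/F$. For part $(i)$ I would first check that all inputs of the main construction of \cite{Najera Frobenius orbit} are present: by \thref{gpr is Frobenius} the category $\gpr(\cs_C)$ is Frobenius with projective--injectives $\proj(\cs_C)$, the functor $F_*$ is an exact automorphism as recorded in the setup, and the standing hypothesis $\Hom_{\cs_C}(X,F_*^l(X))=0$ for $l<0$ guarantees through \thref{remark on completed orbit categories} that $\gpr(\cs_C)\corb F_*$ is well defined. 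The general theorem then produces a Frobenius structure whose stable category is the triangulated hull of $\ul{\gpr(\cs_C)}\,/\,\ul{F_*}$. To identify this hull with $\cd_Q/F$ I would invoke the distinguished triangle equivalence $\phi\colon\ul{\gpr(\cs_C)}\iso\cd_Q$ of \thref{description of gpr}$(ii)$ together with the compatibility $\phi\circ\ul{F_*}\cong F\circ\phi$, which follows from the fact that $F$ and $F_*$ both arise from the same automorphism of $k(\Z Q)$ via Happel's embedding \thref{Happel embedding}. Transporting along $\phi$ turns the hull of $\ul{\gpr(\cs_C)}\,/\,\ul{F_*}$ into the triangulated hull of $\cd_Q/F$, which by hypothesis equals $\cd_Q/F$.

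For part $(ii)$ the decisive point is that completed orbit categories preserve the Krull--Schmidt property, which is precisely the reason completions are used in place of ordinary orbit categories; this yields the Krull--Schmidt half of (P0), while $\Ext$-finiteness follows from the $\Hom$-finiteness of $\cd_Q/F$ on the stable part and from finiteness of the morphism spaces among projectives. The identification of the Auslander--Reiten quiver is the core of $(ii)$: by \thref{description of gpr}$(i)$ the AR quiver of $\gpr(\cs_C)$ is $\Z\widetilde{Q}_C$, and since every indecomposable of $\gpr(\cs_C)\corb F_*$ lifts under $p$ (\thref{indecomposables orbit}), the projection $p$ carries almost split sequences to almost split sequences, so the AR quiver of the orbit category is the orbit quiver $\Z\widetilde{Q}_C/F$. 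Conditions (P1) and (P2) then descend directly from $\gpr(\cs_C)$ through $p$, and standardness (P3) follows because the mesh category of $\Z\widetilde{Q}_C/F$ is the (completed) orbit category of the mesh category of $\Z\widetilde{Q}_C$, which by standardness of $\gpr(\cs_C)$ is exactly $\gpr(\cs_C)\corb F_*$.

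Part $(iii)$ is where I expect the main obstacle. Well-definedness of the map, and the fact that its image consists of Frobenius models of $\cd_Q/F$ satisfying (P0)--(P3), are immediate from $(i)$ and $(ii)$. Injectivity is recovered combinatorially: from $\gpr(\cs_C)\corb F_*$ one reads off the AR quiver $\Z\widetilde{Q}_C/F$, and lifting along the $F$-action reconstructs $\Z\widetilde{Q}_C$ and hence the frozen vertices encoding $C$. The hard part is surjectivity. Given any Frobenius model $\cg$ of $\cd_Q/F$ satisfying (P0)--(P3), its stable AR quiver must coincide with that of $\cd_Q/F$, namely $\Z Q/F$, and (P3) forces $\cg$ to be the standard mesh category on its full AR quiver $\Gamma$; the task is to show that the projective vertices of $\Gamma$ occupy the positions $\sigma^{-1}(C)/F$ for a unique $F$-invariant admissible configuration $C$, so that $\Gamma\cong\Z\widetilde{Q}_C/F$ and standardness gives $\cg\cong\gpr(\cs_C)\corb F_*$. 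Concretely I would unfold $\cg$ along the $F$-action to a covering Frobenius category $\widehat{\cg}$, apply \thref{Frobenius models} to write $\widehat{\cg}\cong\gpr(\cs_C)$ for a unique admissible $C$, and then identify the deck transformation with $F$ to force $F(C)=C$ and $\cg\cong\gpr(\cs_C)\corb F_*$. The delicate verifications are that the unfolded category is genuinely a Frobenius model of $\cd_Q$ (and not of some intermediate quotient) and that the covering is compatible with the exact structures, so that \thref{Frobenius models} applies verbatim.
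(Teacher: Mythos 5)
Your plan for parts (ii) and (iii) is broadly reasonable, but part (i) has a genuine gap at precisely the point where the paper invests its main technical effort. You transport the triangulated hull of $\ul{\gpr(\cs_C)}/\ul{F_*}$ to that of $\cd_Q/F$ using only an isomorphism of triangle functors $\phi\circ\ul{F_*}\cong F\circ\phi$. But the triangulated hull (\thref{triangulated hull}) is not an invariant of the pair consisting of a triangulated category and an autoequivalence: it is defined in terms of a chosen dg enhancement and a chosen dg lift of the autoequivalence, and it is not functorial along equivalences of triangulated categories. Here the hull of $\ul{\ce}/\ul{F_*}$ is computed in the enhancement $\ac(\cp)_{dg}$ with the componentwise lift $\widetilde{F}_*$, whereas the hull of $\cd_Q/F$ is computed in $C^b(\proj(kQ))_{dg}$ with $\widetilde{F}={?\otimes_{kQ}P^{\bullet}M}$; to compare them one needs a quasi-functor between these enhancements whose $H^0$ is $\phi^{-1}$ and which intertwines $\widetilde{F}_*$ and $\widetilde{F}$ up to isomorphism \emph{of quasi-functors}. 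This is exactly what the paper proves: \thref{lift of phi} constructs a dg lift $\tilde{\phi}^{-1}$ of $\phi^{-1}$ --- the nontrivial step being the dg lift of $\can:\cd^b(\ce)\to\ul{\ce}$ via completed total complexes and the universal property of dg quotients (\thref{Tabuada's theorem}) --- and \thref{a quasi-isomorphism} then deduces that $\ac(\cp)_{dg}/\widetilde{F}_*$ is quasi-equivalent to $C^b(\proj kQ)_{dg}/\widetilde{F}$, so the hulls agree. Without this dg-level argument, ``transporting along $\phi$'' is unjustified; it is also the reason the setup insists that $F$ be algebraic, \ie isomorphic to $?\lten_{kQ}M$.

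Two further points. In (ii), your appeal to ``finiteness of the morphism spaces among projectives'' is off: those spaces are in general infinite-dimensional (the completed orbit category is typically $\Hom$-infinite); (P0) only requires $\Ext^1$-finiteness, which follows from $\Hom$-finiteness of $\cd_Q/F$ alone. Likewise, ``(P1) and (P2) descend directly through $p$'' conceals the one computation the paper actually performs: one must verify that the irreducible morphisms attached to a projective vertex $z^{\wedge}$ (corresponding to the arrows $z\to\sigma^{-1}(z)$ and $\sigma(z)\to z$) remain irreducible, indeed almost split, after applying $p$; this is what pins down the frozen part of the AR quiver and yields (P2). In (iii), your unfolding/covering strategy is genuinely different from the paper, which instead reads off $C$ from the projective vertices, checks admissibility via an inflation into a projective-injective, and then repeats the Keller--Scherotzke argument of \thref{Frobenius models} in the quotient setting; your route might work, but as you concede, the claims that the unfolded category is a Frobenius model of $\cd_Q$ and that the covering respects the exact structures are exactly the content that would have to be proved, so as written it remains a plan rather than a proof.
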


We can see that in the setting of \thref{Frobenius model orbit}, the completed orbit category $\gpr(\cs_C) \corb F_{\ast}$ is in general $\Hom$-infinite. Note however that it is always $\Ext^1$-finite since $\cd_Q/F$ is $\Hom$-finite. Keller showed in \cite{Keller triang} that $\cd_Q/F$ is $\Hom$-finite and equivalent to its triangulated hull if the following conditions hold:
\begin{itemize}
\item[$(a)$] for each indecomposable $U$ of $\mod (kQ)$, there are only finitely many $i\in \Z$ such that the object $F^i (U)$ lies in $\mod (kQ)$;
\item[$(b)$] there is an integer $N \geq 0$ such that the $F$-orbit of each indecomposable of $\cd_Q$ contains an object $\Sigma^nU$, for some $0 \leq n \leq N$ and some indecomposable object $U$ of $\mod (kQ)$.
\end{itemize}	

\begin{remark}
\thlabel{higher cluster categories}
The functor $\Sigma^n\circ \tau^{-1}$ satisfies these conditions for all $n\geq 0$ (we are in the Dynkin case, so $n=0$ is included). These are the functors used to define higher cluster categories, so \thref{Frobenius model orbit} holds for these kind of categories.
\end{remark}

\begin{theorem}
\thlabel{categorification}
Let $\Delta$ be the simply laced Dynkin diagram which underlies the quiver $Q$. Let $F$ be the autoequivalence $\Sigma \circ \tau^{-1}:\cd_Q \to \cd_Q$ and $C\subset \Z Q$ be an admissible configuration invariant under $F$. 
\begin{itemize}
\item[$(i)$]
Then  $\gpr(\cs_C) \corb F_*$ is a Frobenius 2-Calabi-Yau realization $($in the sense of \cite{Fu Keller}$)$ of a cluster algebra with geometric coefficients of type $\Delta$.
\item[$(ii)$] If $C =\Z Q$ $($\ie $\cs_C = \cs)$ then  $\gpr(\cs)\corb F_*$ is a 2-Calabi-Yau realization of the cluster algebra with universal coefficients of type $\Delta$ $($\cf \cite{clusters 4}$)$. 
\end{itemize}
\end{theorem}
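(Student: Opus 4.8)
The plan is to deduce \thref{categorification} from the structural result \thref{Frobenius model orbit}, the classical $2$-Calabi-Yau property of cluster categories, and the root-theoretic combinatorics of universal coefficients in finite type. For part $(i)$, with $F=\Sigma\circ\tau^{-1}$ and a general $F$-invariant admissible configuration $C$, I would first check that $F$ meets the hypotheses of \thref{Frobenius model orbit}. Conditions $(a)$ and $(b)$ hold by \thref{higher cluster categories} (the case $n=1$), so $\cd_Q/F$ is $\Hom$-finite and coincides with its triangulated hull. It then remains to verify the vanishing $\Hom_{\cs_C}(X,F^l_\ast X)=0$ for all $l<0$ and every indecomposable $X=x^{\wedge}$; using the equivalence $\gpr(\cs_C)\cong\proj(\cR_C)$ of \thref{description of gpr} together with Yoneda, this amounts to $\cR_C(x,F^l x)=0$, which I expect to follow from the fact that $\Sigma\circ\tau^{-1}$ strictly increases a suitable height function on $\Z\widetilde Q$, whereas nonzero mesh morphisms travel only in the increasing direction.

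Granting this, \thref{Frobenius model orbit} yields that $\ce:=\gpr(\cs_C)\corb F_\ast$ is a Frobenius category with $\ul{\ce}\cong\cd_Q/F=\cc_Q$, the cluster category of type $\Delta$, satisfying $(P0)$--$(P3)$; since cluster categories are $2$-Calabi-Yau, $\ce$ is stably $2$-Calabi-Yau, and by the remark following \thref{Frobenius model orbit} it is $\Ext^1$-finite. I would then produce an initial cluster-tilting object $T$ by lifting a cluster-tilting object of $\cc_Q$ (for instance the image of $kQ$) and adjoining the finitely many indecomposable projective-injectives of $\ce$; that $T$ is cluster-tilting is the standard correspondence between cluster-tilting objects of a stably $2$-CY Frobenius category containing all projective-injectives and cluster-tilting objects of its stable category. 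Applying the Fu--Keller cluster character \cite{Fu Keller}, adapted to the $\Ext^1$-finite (but $\Hom$-infinite) framework of \cite{Najera Frobenius orbit}, gives a surjection from a cluster algebra with geometric coefficients onto the image of $X_?$, under which mutation of cluster-tilting objects matches seed mutation, indecomposable mutable summands match cluster variables, and projective-injectives match coefficients. Reading the extended exchange matrix of $T$ off the Gabriel quiver of $\End_\ce(T)$ shows that its principal part is the type-$\Delta$ exchange matrix and its coefficient rows are integer vectors indexed by the projective-injectives, proving $(i)$.

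For part $(ii)$ I would specialize to $C=\Z Q_0$, so that $\cs_C=\cs$ and the projective-injectives of $\ce$ are exactly the $F$-orbits of all frozen vertices $\sigma^{-1}(\Z Q_0)$. The task is to identify the resulting coefficient system with the universal one. I would compute the coefficient rows of the extended exchange matrix, i.e.\ the extensions between the mutable summands of $T$ and the projective-injectives, and match them, through the root-theoretic dictionary furnished by the Nakajima category, with the full collection of $\mathbf c$-vectors of the finite-type root system $\Delta$. By the classification of universal geometric coefficients in finite type \cite{clusters 4,Reading universal}, a coefficient system is universal precisely when its coefficient rows run over all $\mathbf c$-vectors; since retaining $C=\Z Q_0$ keeps \emph{every} frozen vertex, the frozen data modulo $F$ should biject with this complete set of $\mathbf c$-vectors, and the resulting universal property (every coefficient specialization factoring through) would identify $\gpr(\cs)\corb F_\ast$ as a $2$-Calabi-Yau realization of the cluster algebra with universal coefficients of type $\Delta$.

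I expect the main obstacle to be precisely this last identification in part $(ii)$: translating the combinatorics of the frozen vertices of the full singular Nakajima category modulo $\Sigma\circ\tau^{-1}$ into the language of $\mathbf c$-vectors, and verifying that the coefficient rows produced by the categorical exchange relations are exactly those of the universal coefficient system. By contrast, the cluster-character step of part $(i)$, although technically delicate on account of $\Hom$-infiniteness, should reduce to the already-understood structure of $\cc_Q$ together with the extension data recorded by the projective-injectives.
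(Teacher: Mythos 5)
The decisive gap is in part $(ii)$, and it sits exactly where you place ``the main obstacle'': the identification of the coefficient data with the universal coefficients is never carried out, and the classification you invoke in its place is incorrect. In finite type the coefficient rows of the universal extended exchange matrix are indexed by the \emph{almost positive roots} $\Phi_{\geq -1}$, one row $(\varepsilon(j)[\alpha:\alpha_j])_j$ per cluster variable (\thref{th universal coefficients}; by \cite{Reading universal} these span the rays of the mutation fan), whereas the set of all $\mathbf{c}$-vectors is $\Phi_{>0}\cup(-\Phi_{>0})$: already in type $A_2$ this is five rows versus six $\mathbf{c}$-vectors, so ``universal iff the coefficient rows run over all $\mathbf{c}$-vectors'' cannot be the criterion. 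The paper needs no such classification theorem: it exhibits the explicit cluster-tilting object $\widetilde{T}=\bigoplus_{i\in Q_0}X_{-\alpha_i}\oplus\bigoplus_{\alpha\in\Phi_{\geq -1}}P_{\alpha}$ and computes its ice quiver directly, showing it coincides with the ice quiver encoding the initial coefficients of \thref{th universal coefficients}. That computation is the actual content of the proof: the exchange conflations of \thref{exch sequences general}, obtained by lifting mesh relations and exchange triangles from $\cc_Q$ to $\ce$ and applying the reflection-functor identity $[\tau_{+}(\alpha):\alpha_i]=\dim\Ext^1_{kQ}(M_{\alpha},S(i))$ of \thref{tau and ext}, together with the projective resolutions of the simple modules over the cluster-tilting subcategory (\thref{resolution of simples 1}, \thref{resolution of simples 2}) from which the arrows of the ice quiver are read off. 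Without this computation, or a corrected substitute for the classification you cite, part $(ii)$ is not proved. Note also that this dictionary requires $Q$ to be bipartite (the setting of \thref{th universal coefficients} and of the $\tau_{\pm}$ combinatorics), a reduction your proposal does not address; it is harmless because the universal cluster algebra is an invariant of the mutation class, but it must be said.

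For part $(i)$ your conclusion is right but the route is both heavier than necessary and unjustified as written. The definition of a 2-CY realization used in the paper (after \cite{Fu Keller}) asks only for a cluster structure together with a cluster-tilting object whose ice quiver is the given one; the paper therefore deduces $(i)$ directly from Theorem II.1.6 of \cite{BIRS} (applicable since $\gpr(\cs_C)\corb F_*$ is stably 2-CY by \thref{Frobenius model orbit}, has cluster-tilting objects lifted from $\cc_Q$, and has no loops or 2-cycles in finite type). Your detour through the Fu--Keller cluster character, by contrast, is not available off the shelf: that theory is developed for $\Hom$-finite stably 2-CY Frobenius categories, while $\gpr(\cs_C)\corb F_*$ is in general $\Hom$-infinite, and the ``adaptation'' you appeal to is not contained in \cite{Najera Frobenius orbit}, which concerns the Frobenius structure of completed orbit categories and not cluster characters. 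Your preliminary verification of the hypotheses of \thref{Frobenius model orbit} (via \thref{higher cluster categories} and a vanishing argument for $\Hom_{\cs_C}(X,F_*^l X)$, $l<0$) is, on the other hand, a reasonable filling-in of a point the paper passes over quickly.
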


\section{Frobenius models of $\cd_Q/F$}
In this section we prove \thref{Frobenius model orbit}. 

\subsection{The stratifying functor}
The functor $\Phi:\mod(\cs_C)\to \cd_Q$ of \thref{description of gpr} was named in \cite{Keller Scherotzke 1} the \emph{stratifying functor}. It can be used to define a stratification of Nakajima's graded affine quiver varieties (introduced in \cite{Nakajima}) whose strata are parameterized by the objects of $\ind(\cd_Q)$. We will show that $\phi:\ul{\gpr}(\cs_C) \to \cd_Q$ is induced by a quasi-functor. 
The inverse of $\phi:\ul{\gpr}(\cs_C) \to \cd_Q$ can be described explicitly as the composition of two functors as follows:
on the one hand we consider the path category $kQ$ as a full subcategory of $\cR_C$ via the embedding $i \mapsto (i,0)$. The restriction functor gives a functor $kQ\to \cs_C$ taking $x$ to $\res(x^{\wedge})$. It gives rise to a $kQ$-$\cs_C$-bimodule $X$ given by
\begin{equation*}
X(u,x)= \Hom (u^{\wedge},\res(x^{\wedge})), \ \text{for }\ x\in Q_0\ \text{and} \ u\in\sigma(C)
\end{equation*}
and therefore a functor
\begin{equation*}
?\lten_{kQ} X : \cd_Q \to \cd^b(\mod(\cs_C)).
\end{equation*}
Notice that for every $kQ$-module $M$ the $\cs_C$-module $M\otimes_{kQ} X$ lies in $\gpr(\cs_C)$. By definition, the derived category of $\gpr(\cs_C)$ (as an exact category) can be identified with a full triangulated subcategory of $\cd^b(\mod(\cs_C))$. Therefore, we can consider the derived tensor product as a triangulated functor
\begin{equation*}
?\lten_{kQ} X : \cd_Q \to \cd^b(\gpr(\cs_C)).
\end{equation*}
On the other hand, since $\gpr(\cs_C)$ is a Frobenius category, there is a canonical triangulated functor 
\begin{equation*}
\can : \cd^b(\gpr(\cs_C)) \to \ul{\gpr}(\cs_C).
\end{equation*}
constructed as follows. Let $\cp $ be the full subcategory of $\gpr(\cs_C)$ formed by its projective objects and denote by $\ch^-(\cp)$ the homotopy category associated to the category of bounded above complexes with components in $\cp$. There is a functor
\begin{equation*}
\label{Rickard equivalence}
{\bf p}:\cd^b(\gpr(\cs_C))\to \ch^-(\cp) 
\end{equation*}
sending a complex $X$ to a quasi-isomorphic complex ${\bf p}X\in  \ch^-(\cp)$. If $P^{\bullet}$ is a complex in $\ch^-(\cp)$ and $p\in\Z$ is small enough, then the objects $\Sigma^{-p}(Z^p(P^{\bullet}))\in \ul{\gpr}(\cs_C)$ are canonically isomorphic. Moreover, if $P_1^{\bullet}$ and $P^{\bullet}_2$ are quasi-isomorphic complexes in $\ch^-(\cp)$ and $p\ll 0$ then we have an isomorphism $\Sigma^{-p}(Z^p(P^{\bullet}_1))\cong \Sigma^{-p}(Z^p(P^{\bullet}_2))$ in $\ul{\gpr}(\cs_C)$. Thus there is a functor
\begin{equation*}
t: \ch^-(\cp)\to \ul{\gpr}(\cs_C)
\end{equation*}
sending a complex $P^{\bullet}$ to $t(P^{\bullet}):= \Sigma^{-p}(Z^p(P^{\bullet}))$ for some $p \ll 0$ (which depends on $P^{\bullet}$). 
Then $\can:\cd^b(\gpr(\cs_C)) \to \ul{\gpr}(\cs_C)$ is defined as the composition of the functors described above
\begin{equation*}
\can=t\circ {\bf p}.
\end{equation*}
Finally, the functor $\phi^{-1}:\cd_Q \to \ul{\gpr}(\cs_C)$
is given by the composition
\[
\xymatrix@C=1.2cm{
\phi^{-1}:\cd_Q \ar[r]^-{?\lten_{kQ} X} &  \cd^b(\gpr(\cs_C)) \ar[r]^-{\can} &  \ul{\gpr}(\cs_C).
}
\]
\begin{remark}
The fact that $\phi^{-1}$ is a triangle equivalence is proved in Section 5 of \cite{Keller Scherotzke 1}.
\end{remark}

\subsection{A dg lift of $\phi^{-1}$}
In this subsection we prove a technical result which we shall need to compare the triangulated structures on the orbit categories that we consider. Namely, we prove that the functor $\phi^{-1}$ admits a dg lift in the sense of \cite{Keller triang}. In other words, we prove that $\phi^{-1}$ is the triangulated functor associated to a quasi-functor between pretriangulated dg categories.

\begin{notation}
Let $C\subset \Z Q_0$ be an admissible configuration. For the rest of the paper we let $\ce_C$ be the Frobenius category $\gpr (\cs_C)$. In the rest of this section we will drop the subindex $C$ and for simplicity just write $\ce$. We let $\cp$ be the full subcategory of $\ce$ formed by its projective objects.
\end{notation}

Let $ C^b(\proj(kQ))_{dg}$ be the dg category of bounded complexes of projective $kQ$-modules and $\ac (\cp)_{dg}$ be the dg category of acyclic complexes of objects of $\cp$. The dg categories $ C^b(\proj(kQ))_{dg}$ and  $\ac (\cp)_{dg}$ are dg enhancements of the categories $\cd_Q$ and $\ul{\ce}$, respectively. The dg quotient
\begin{equation*}
\cd^b(\ce)_{dg}:= C^b(\ce)_{dg}/ \ac^b(\ce)_{dg}
\end{equation*}
is a dg enhancement of $\cd^b(\ce)$. We will show that the equivalence $\phi^{-1}$ is the triangulated functor associated to a quasi-functor $ C^b(\proj(kQ))_{dg}  \to \ac(\cp)_{dg}$. To prove that the functor $\can: \cd^b(\ce)\to \ul{\ce}$ is algebraic we need the following.

\begin{definition}
Let $\text{dgcat}_k$ denote the category of small dg categories over $k$. It admits the structure of a model category whose weak equivalences are the quasi-equivalences (\cf \cite{Tabuada model}). Let $\text{Hqe}$ denote the associated homotopy category. Let $\cc$ be a small dg category and $\cb$ a full dg subcategory of $\cc$. We say that a morphism $G: \cc \to \cc'$ in $\text{Hqe}$ \emph{annihilates} $\cb$ if its associated functor $H^0(G):H^0(\cc) \to H^0(\cc')$ takes all objects of $\cb$ to zero objects.
\end{definition}

\begin{theorem}$($\cite{Keller on dg, Tabuada quotient}$)$
\thlabel{Tabuada's theorem}
There is a morphism $ \cc \to \cc / \cb $ of $\text{Hqe}$ which annihilates
$\cb$ and is universal among the morphisms annihilating $\cb$.
Moreover, the morphism $\cc \to \cc / \cb $ induces an equivalence 
between the category of quasi-functors $\cc/\cb \to \cc'$ into the category of quasi-functors $\cc \to \cc'$ whose associated functor annihilates $\cb$.
\end{theorem}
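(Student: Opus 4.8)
The plan is to produce an explicit model for $\cc/\cb$ and then verify the two assertions—existence as an object of $\text{Hqe}$ annihilating $\cb$, and the universal property phrased as an equivalence of categories of quasi-functors—by reducing everything to the Tabuada model structure on $\dgcat_k$. First I would replace the inclusion $\cb\hookrightarrow\cc$ by a cofibration between cofibrant objects, so that homotopy colimits may be computed as strict colimits; this is harmless since all constructions depend only on the image in $\text{Hqe}$. With a cofibrant model in hand, I would define $\cc/\cb$ by Drinfeld's contracting-homotopy recipe: keep the objects of $\cc$, and freely adjoin to the morphism complexes one new generator $\eps_U$ of degree $-1$ for each object $U$ of $\cb$, subject to $d\eps_U=\id_U$. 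The canonical inclusion $\cc\to\cc/\cb$ is then a morphism of $\dgcat_k$ and descends to the required morphism of $\text{Hqe}$.

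Next I would check that this morphism annihilates $\cb$. By construction each $\eps_U$ is a degree $-1$ endomorphism of $U$ in $\cc/\cb$ with $d\eps_U=\id_U$, so the identity of $U$ becomes null-homotopic; hence $U$ is isomorphic to a zero object in $H^0(\cc/\cb)$, and the associated functor $H^0(\cc\to\cc/\cb)$ kills every object of $\cb$, as desired.

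The heart of the matter is the universal property. Here the cleanest route is to identify $\cc/\cb$ with the homotopy cofiber of the inclusion $\cb\hookrightarrow\cc$ in $\dgcat_k$, i.e. with the homotopy pushout of $0\leftarrow\cb\hookrightarrow\cc$, where $\cb\to 0$ is the map to the zero dg category; Drinfeld's construction above is a concrete model for this pushout. Granting this, for any dg category $\cc'$ the derived internal $\Hom$ carries the pushout to a homotopy pullback and exhibits $\mathbf{R}\Hom(\cc/\cb,\cc')$ as the homotopy fiber of the restriction map $\mathbf{R}\Hom(\cc,\cc')\to\mathbf{R}\Hom(\cb,\cc')$ over the zero quasi-functor. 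The points of this fiber are exactly the quasi-functors $\cc\to\cc'$ whose restriction to $\cb$ is homotopic to zero, which is the condition that the associated functor send every object of $\cb$ to a zero object. Passing to $H^0$ of these internal mapping categories—whose objects are quasi-functors and whose morphisms are homotopy classes of bimodule maps—then yields the asserted equivalence of categories of quasi-functors.

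I expect the main obstacle to be the passage from the strict, pointwise statement ``$\eps_U$ provides a contraction'' to the homotopy-coherent universal property. Concretely, given a quasi-functor $G:\cc\to\cc'$ annihilating $\cb$, extending it over $\cc/\cb$ amounts to choosing, for each $U\in\cb$, a contracting homotopy $h_U$ of $\id_{G(U)}$ and sending $\eps_U\mapsto h_U$; showing that the resulting extension is well defined and unique up to the correct notion of equivalence—and that these choices assemble into an equivalence of categories rather than a mere bijection on isomorphism classes—is exactly where the homotopy flatness of the cofibrant model and the good behaviour of $\mathbf{R}\Hom$ in $\text{Hqe}$ are indispensable. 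This is the step I would treat most carefully, and it is the reason the statement must be phrased in $\text{Hqe}$ and in terms of quasi-functors rather than strict dg functors.
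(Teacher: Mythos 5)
The paper itself offers no proof of this statement: it is imported verbatim from \cite{Keller on dg} and \cite{Tabuada quotient}, so your proposal has to be measured against the proofs of Drinfeld, Keller and Tabuada. Your first two steps are fine and standard: Drinfeld's recipe (adjoin a degree $-1$ endomorphism $\eps_U$ with $d\eps_U=\id_U$ for each object $U$ of $\cb$, after passing to a cofibrant, hence homotopically flat, model) does produce the dg quotient, and the annihilation of $\cb$ is immediate. The genuine gap is the sentence ``Granting this'' applied to the identification of $\cc/\cb$ with the homotopy pushout of $0 \leftarrow \cb \hookrightarrow \cc$. That identification is not a citable background fact independent of the theorem: it is essentially the theorem itself, and proving it is exactly the content of \cite{Tabuada quotient}. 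Concretely, to justify it you would have to show that, for a cofibrant model of $\cc$, the canonical comparison functor from Drinfeld's quotient to the strict pushout $\cc\sqcup_{\cb}0$ --- that is, to the quotient of $\cc$ by the dg ideal generated by the identities $\id_U$, $U\in\cb$, obtained by sending every $\eps_U$ to $0$ --- is a quasi-equivalence. This is precisely Drinfeld's computation of the morphism complexes of the quotient (the filtration argument that uses homotopy flatness), which is the mathematical core of \cite{Drinfeld} and appears nowhere in your outline. As written, the argument is therefore circular: the universal property is deduced from an unproved hypothesis that already encodes it.

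There are also secondary points which are repairable but glossed over. First, the cofiber identification is false for $\cb=\emptyset$ and in general only holds because the objects of $\cb$ survive in $\cc/\cb$ as contractible objects, which absorb (up to quasi-equivalence) the strictly zero object created by the pushout; this needs to be said. Second, the ``zero quasi-functor'' $\cb\to\cc'$ that you use as a basepoint need not exist when $\cc'$ has no contractible objects, so the homotopy fiber has to be replaced by the homotopy pullback along $\mathbf{R}\Hom(0,\cc')\to\mathbf{R}\Hom(\cb,\cc')$. Third, ``passing to $H^0$'' is not innocuous: $H^0$ does not commute with homotopy pullbacks, and what the statement requires is that $\mathbf{R}\Hom(\cc/\cb,\cc')\to\mathbf{R}\Hom(\cc,\cc')$ be quasi-fully faithful with essential image the quasi-functors annihilating $\cb$ --- for instance because it is a homotopy pullback of the quasi-fully faithful arrow just mentioned. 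Each of these can be fixed, but they all sit on top of the unproved identification above, which is where the theorem actually lives.
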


\begin{proposition}
\thlabel{lift of phi}
There is a quasi-functor
\begin{equation*}
\tilde{\phi}^{-1}:  C^b(\proj(kQ))_{dg} \to \ac(\cp)_{dg} 
\end{equation*}
such that $H^0(\tilde{\phi}^{-1})\cong\phi^{-1}$.
\end{proposition}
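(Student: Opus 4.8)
The plan is to realize $\tilde{\phi}^{-1}$ as a composite of two quasi-functors, one lifting each factor of $\phi^{-1}=\can\circ(?\lten_{kQ}X)$. Since $C^b(\proj(kQ))_{dg}$, $\cd^b(\ce)_{dg}$ and $\ac(\cp)_{dg}$ are dg enhancements of $\cd_Q$, $\cd^b(\ce)$ and $\ul{\ce}$ respectively, it suffices to produce quasi-functors
\[
C^b(\proj(kQ))_{dg}\xrightarrow{\ \alpha\ }\cd^b(\ce)_{dg}\xrightarrow{\ \beta\ }\ac(\cp)_{dg}
\]
with $H^0(\alpha)\cong{?\lten_{kQ}X}$ and $H^0(\beta)\cong\can$, and then to set $\tilde{\phi}^{-1}:=\beta\circ\alpha$; the conclusion $H^0(\tilde{\phi}^{-1})\cong\phi^{-1}$ follows because $H^0$ of a composite of quasi-functors is the composite of the $H^0$'s.

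The first factor is the easy one. As recorded above, $M\otimes_{kQ}X$ lies in $\ce=\gpr(\cs_C)$ for every $kQ$-module $M$, so tensoring with the $kQ$-$\cs_C$-bimodule $X$ is a strict dg functor $?\otimes_{kQ}X\colon C^b(\proj(kQ))_{dg}\to C^b(\ce)_{dg}$. Composing it with the canonical dg localization $C^b(\ce)_{dg}\to\cd^b(\ce)_{dg}$ gives the quasi-functor $\alpha$. Because the source consists of bounded complexes of \emph{projective} $kQ$-modules, $?\otimes_{kQ}X$ already computes the derived tensor product, whence $H^0(\alpha)\cong{?\lten_{kQ}X}$ with no further resolution needed.

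The second factor is the crux, and here I would invoke \thref{Tabuada's theorem}. The functor $\can$ is a Verdier localization: by the Buchweitz--Keller description of the stable category of a Frobenius category, the canonical functor $\cd^b(\ce)\to\ul{\ce}$ identifies $\ul{\ce}$ with $\cd^b(\ce)/\triang(\cp)$, and, via complete resolutions, with the homotopy category $H^0(\ac(\cp)_{dg})$ of totally acyclic complexes of projectives. In particular $\can$ sends each object of $\cp$ to zero, so the associated morphism of dg categories annihilates the full dg subcategory $\cp_{dg}\subset\cd^b(\ce)_{dg}$. By \thref{Tabuada's theorem} the dg quotient $\cd^b(\ce)_{dg}\to\cd^b(\ce)_{dg}/\cp_{dg}$ is universal among quasi-functors annihilating $\cp_{dg}$, and its zeroth homology is exactly the Verdier quotient $\cd^b(\ce)\to\cd^b(\ce)/\triang(\cp)=\ul{\ce}$, i.e.\ $\can$. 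Thus $\cd^b(\ce)_{dg}/\cp_{dg}$ is a dg enhancement of $\ul{\ce}$, and I would obtain $\beta$ by composing this quotient with a quasi-equivalence $\cd^b(\ce)_{dg}/\cp_{dg}\simeq\ac(\cp)_{dg}$.

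Producing that last quasi-equivalence is the step I expect to be the main obstacle: both sides enhance $\ul{\ce}$ and agree with the complete-resolution functor on $H^0$, but one must promote this triangle equivalence to an honest zig-zag of dg functors inducing quasi-isomorphisms on all $\Hom$-complexes, rather than merely an equivalence of homotopy categories. To this end I would exploit the explicit description $\res\colon\proj(\cR_C)\xrightarrow{\sim}\gpr(\cs_C)=\ce$ of \thref{description of gpr}, which makes the complete projective resolution of each indecomposable $\res(x^\wedge)$ canonical in terms of the repetition quiver. This lets one assemble the complete resolutions into a dg functor $C^b(\ce)_{dg}\to\ac(\cp)_{dg}$ that annihilates both $\ac^b(\ce)_{dg}$ and $\cp_{dg}$ and descends to the desired quasi-equivalence on the quotient. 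Composing $\beta\circ\alpha$ then yields $\tilde{\phi}^{-1}$ with $H^0(\tilde{\phi}^{-1})\cong\can\circ(?\lten_{kQ}X)=\phi^{-1}$.
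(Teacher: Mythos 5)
Your proposal is correct, and at its core it runs on the same engine as the paper's proof, wrapped in different packaging. Both arguments lift the tensor factor strictly (your observation that $?\otimes_{kQ}X$ needs no further resolution on $C^b(\proj(kQ))_{dg}$ is sound, since bounded complexes of projectives are homotopically flat; the paper instead tensors with a projective bimodule resolution $P^{\bullet}X$, and the two choices give isomorphic quasi-functors), and both obtain the lift of $\can$ from a dg functor $C^b(\ce)_{dg}\to\ac(\cp)_{dg}$ assembled out of functorially chosen complete resolutions, then descended through a dg quotient via \thref{Tabuada's theorem}. The difference is where the quotient is taken. The paper descends this dg functor along $C^b(\ce)_{dg}\to \cd^b(\ce)_{dg}=C^b(\ce)_{dg}/\ac^b(\ce)_{dg}$, so a single application of \thref{Tabuada's theorem} produces $\widetilde{\can}$ with $H^0(\widetilde{\can})\cong\can$; it never needs the identification of $\ul{\ce}$ with the Verdier quotient $\cd^b(\ce)/\triang(\cp)$, the dg quotient $\cd^b(\ce)_{dg}/\cp_{dg}$, or any quasi-equivalence claim. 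Your route needs all three, plus the (true, but extra) fact that a quasi-functor between pretriangulated dg categories inducing an equivalence on $H^0$ is invertible in $\text{Hqe}$ --- which, incidentally, also dissolves the obstacle you flag: since $H^n\Hom^{\bullet}(x,y)\cong\Hom_{H^0}(x,\Sigma^n y)$ in a pretriangulated dg category, an $H^0$-equivalence is automatically quasi-fully-faithful, so no genuine ``promotion'' problem remains once the quasi-functor is constructed. The one step you leave as a sketch (``assemble the complete resolutions into a dg functor'') is exactly where the paper is concrete: functorial resolutions give a faithful functor $i:\ce\to\ac(\cp)$, a bounded complex over $\ce$ is then viewed as a double complex with entries in $\cp$, and one takes the completed total complex $\mathrm{Tot}$; the identity $\can(M)\cong H^0(\mathrm{Tot}(i(M)))$ is what guarantees that the descended quasi-functor has the right $H^0$. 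In short: same key construction, but the paper's organization is more economical, while yours additionally yields the (here unneeded, but reusable) statement that $\cd^b(\ce)_{dg}/\cp_{dg}$ is a dg enhancement of $\ul{\ce}$ quasi-equivalent to $\ac(\cp)_{dg}$.
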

\begin{proof}
Recall that $\phi^{-1}$ is defined as the composition
\[
\xymatrix@C=1.2cm{
\phi^{-1}:\cd_Q \ar[r]^-{?\lten_{kQ} X} &  \cd^b(\ce) \ar[r]^-{\can} &  \ul{\ce}.
}
\]
It is enough to prove that these two functors admit a dg lift. Let $P^{\bullet}X$ be a projective resolution of $X$ as a $kQ$-$\cs_C$-bimodule. We can compose the dg functor
\begin{equation*}
-{?\otimes_{kQ} P^{\bullet} X}: C^b(\proj(kQ))_{dg} \to C^b(\ce)_{dg} 
\end{equation*}
with the canonical dg functor $C^b(\ce)_{dg} \to \cd^b(\ce)_{dg}$ to obtain a dg lift of $?\lten_{kQ} X : \cd_Q \to \cd^b(\ce)$. Recall that every module $M\in \ce$ is of the form $M\cong Z^0(P_M)$, for some complex $P_M\in \ac (\cp)$. Moreover, since projective resolutions and injective coresolutions of objects in $\ce$ can be chosen functorially, there is a faithful functor 
\begin{equation*}
i:\ce \rightarrow \ac(\cp).
\end{equation*} 
Therefore, we can consider $C^b(\ce)$ as a subcategory of the category ${\rm Bi}(\cp)$ of \emph{double complexes} with components in $\cp$ (see for instances {\bf Sign Trick 1.2.5} of \cite{Weibel}). Let ${\rm Tot}(B)$ be the \emph{completed }total complex associated to a double complex $B\in {\rm Bi}(\cp)$. By construction, we have that
\begin{equation*}
\can (M)\cong H^0({\rm Tot}(i(M))).
\end{equation*}
The composition ${\rm Tot}\circ i: C^b(\ce)\to \ac(\cp)$ defines a dg functor ${\rm Tot}\circ i:C^b(\ce)_{dg}\to \ac(\cp)_{dg}$ which annihilates $\ac^b(\ce)_{dg}$. In light of \thref{Tabuada's theorem}, there is a quasi-functor
\begin{equation*}
\widetilde{{\can}}:\cd^b(\ce)_{dg}\to \ac(\cp)_{dg}
\end{equation*}
whose associated triangle functor is ${\can}:\cd^b(\ce)\to \ul{\ce}$. Finally, we can define $\tilde{\phi}^{-1}$ as the quasi-functor associated to the composition of dg functors
\[
\xymatrix@C=1.2cm{
C^b(\proj(kQ))_{dg} \ar[r]^-{?\otimes_{kQ} P^{\cdot} X} &  C^b(\ce)_{dg} \ar[r]^-{\widetilde{\can}} & \ac(\cp)_{dg}.
}
\]
\end{proof}

\subsection{Proof of \thref{Frobenius model orbit}}

Recall that we are given an exact autoequivalence $F_*:\ce \iso \ce$ extending a triangle functor $F: \cd_Q\to \cd_Q$ such that $\cd_Q/F$ is $\Hom$-finite and triangulated with respect to $\widetilde{F}$ (see \eqref{canonical dg lift of F}). Since $F_*$ is exact, it restricts to an equivalence $F_*:\cp \to \cp$ and therefore it induces a triangulated functor $\ul{F_*}:\ul{\ce} \to \ul{\ce}$. Let $\widetilde{F}_*:\ac(\cp)_{dg}\to \ac(\cp)_{dg}$ be the dg functor defined as $F_*$ componentwise. Note that $\widetilde{F}_*$ it induces $\ul{F_*}$ in homology, \ie $H^0(\widetilde{F}_*)=\ul{F_*}$.

\begin{lemma}
\thlabel{a quasi-isomorphism}
Under the above assumptions, the dg category $\ac(\cp)_{dg}/\widetilde{F}_*$ is quasi-equivalent to $C^b(\proj kQ)_{dg}/\widetilde{F}$. In particular, the triangulated hull of $\ul{\ce}/\ul{F_*}$ is triangle equivalent to the triangulated hull of $\cd_Q/F$.
\end{lemma}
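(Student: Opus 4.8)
The plan is to derive the quasi-equivalence directly from \thref{lift of phi} together with the functoriality of the dg orbit category construction, and then read off the statement about triangulated hulls from \thref{triangulated hull}. Recall from \thref{lift of phi} that we have a quasi-functor $\tilde{\phi}^{-1}\colon C^b(\proj(kQ))_{dg}\to \ac(\cp)_{dg}$ with $H^0(\tilde{\phi}^{-1})\cong \phi^{-1}$. Since $\phi^{-1}\colon \cd_Q\to \ul{\ce}$ is a triangle equivalence and $C^b(\proj(kQ))_{dg}$, $\ac(\cp)_{dg}$ are dg enhancements of $\cd_Q$ and $\ul{\ce}$ respectively, the quasi-functor $\tilde{\phi}^{-1}$ is already a quasi-equivalence. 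The assignment sending a pair $(\cb,G)$, consisting of a dg category with a dg endofunctor, to its dg orbit category $\cb/G$ is functorial up to quasi-equivalence: a quasi-functor $\psi\colon \cb\to \cb'$ equipped with an isomorphism of quasi-functors $\psi\circ G\cong G'\circ \psi$ induces a quasi-functor $\cb/G\to \cb'/G'$, which is a quasi-equivalence whenever $\psi$ is (this follows from the colimit formula defining the dg orbit category, cf. \cite{Keller triang} and \cite{Najera Frobenius orbit}). Thus the whole proof reduces to producing an isomorphism of quasi-functors
\[
\widetilde{F}_*\circ \tilde{\phi}^{-1}\cong \tilde{\phi}^{-1}\circ \widetilde{F}.
\]

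To establish this intertwining I would pass to the language of bimodules. By construction $\tilde{\phi}^{-1}=\widetilde{\can}\circ(\,?\otimes_{kQ}P^{\bullet}X)$, so $\tilde{\phi}^{-1}\circ \widetilde{F}$ is computed by tensoring with the derived $kQ$-$\cs_C$-bimodule $M\lten_{kQ}X$ and then applying $\widetilde{\can}$. On the other side, $F_*$ is exact and acts componentwise, hence commutes with the total-complex construction underlying $\widetilde{\can}$, and applying $\widetilde{F}_*$ to the output of $?\otimes_{kQ}P^{\bullet}X$ amounts to twisting the $\cs_C$-action on $X$ by $F$; write $F_*(X)$ for the resulting bimodule. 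The desired compatibility therefore reduces to a quasi-isomorphism of derived $kQ$-$\cs_C$-bimodules
\[
F_*(X)\cong M\lten_{kQ}X.
\]
On $H^0$ this is exactly the compatibility $\ul{F_*}\circ \phi^{-1}\cong \phi^{-1}\circ F$ of triangle functors, which holds because $F_*$ was defined to extend $F$: on indecomposables $F_*(x^{\wedge})=F(x)^{\wedge}$, so the relevant square commutes on objects, and standardness (condition (P3) of Section \ref{section Frobenius models}) forces it to commute on morphisms as well, both categories being mesh categories on which $F$ acts by the same graph automorphism. I expect the genuine obstacle to be the upgrade of this object-level and $H^0$-level compatibility to an honest isomorphism of the two bimodules in the derived category of $kQ$-$\cs_C$-bimodules: it is here that the homotopy-coherence of the dg data, as opposed to a mere isomorphism of the induced triangle functors, must be verified, and one should expect to use that $X$ is a genuine bimodule so that $F_*(X)$ and $M\lten_{kQ}X$ can be compared degreewise.

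Granting the intertwining, functoriality yields the quasi-equivalence $\ac(\cp)_{dg}/\widetilde{F}_*\simeq C^b(\proj(kQ))_{dg}/\widetilde{F}$, which is the first assertion of the lemma. For the final clause, recall from \thref{triangulated hull} that the triangulated hull of $\cd_Q/F$ with respect to $\widetilde{F}$ is $H^0(\pretr(C^b(\proj(kQ))_{dg}/\widetilde{F}))$, and likewise the triangulated hull of $\ul{\ce}/\ul{F_*}$ with respect to $\widetilde{F}_*$ is $H^0(\pretr(\ac(\cp)_{dg}/\widetilde{F}_*))$. Since both $\pretr(-)$ and $H^0(-)$ preserve quasi-equivalences, the quasi-equivalence of dg orbit categories produced above induces a triangle equivalence of these two triangulated hulls, which completes the proof.
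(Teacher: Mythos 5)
Your overall skeleton agrees with the paper's: both arguments reduce the lemma, via the universal property of the dg orbit category (Section 9.4 of \cite{Keller triang}; your ``functoriality from the colimit formula'' is the same device), to exhibiting an isomorphism of quasi-functors intertwining $\widetilde{F}$ and $\widetilde{F}_*$ through $\tilde{\phi}^{-1}$ (the paper phrases it with an inverse quasi-functor $\tilde{\phi}$ and the square $M_{\widetilde{F}}M_{\tilde{\phi}}\cong M_{\tilde{\phi}}M_{\widetilde{F}_*}$). The paper then disposes of the intertwining with the words ``by construction''; the entire content of the lemma is concentrated there.

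The genuine gap is in your execution of precisely that step. You reduce the intertwining to a quasi-isomorphism of derived $kQ$-$\cs_C$-bimodules $F_*(X)\cong M\lten_{kQ}X$, but this statement is false in general, because $\tilde{\phi}^{-1}$ is not $?\otimes_{kQ}P^{\bullet}X$ alone: it is $\widetilde{\can}\circ(?\otimes_{kQ}P^{\bullet}X)$, and the two bimodules only become isomorphic \emph{after} composing with $\widetilde{\can}$, which identifies derived-category shifts with stable (co)syzygies. Indeed, evaluated at a vertex $i\in Q_0$ the bimodule $M\lten_{kQ}X$ gives $F(P(i))\lten_{kQ}X$, which is in general a \emph{shifted} Gorenstein projective module, whereas $F_*(X)$ is concentrated in degree $0$. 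Already for $Q$ of type $A_1$, $F=\Sigma\circ\tau^{-1}=\Sigma^2$ and $C=\Z Q_0$ one has $kQ=k$, $X=\res((1,0)^{\wedge})$ a simple $\cs$-module, so $M\lten_{k}X=\Sigma^2 X$ has homology concentrated in degree $-2$, while $F_*(X)$ is the simple module $\res((1,2)^{\wedge})$ in degree $0$; these are not quasi-isomorphic as complexes of bimodules, yet $\can$ takes both to the same object of $\ul{\gpr}(\cs)$, since the suspension of the stable category is the cosyzygy. So the intertwining cannot be checked ``degreewise on the bimodule $X$'' as you propose: it genuinely lives at the level of quasi-functors into $\ac(\cp)_{dg}$ (equivalently, of dg bimodules over $C^b(\proj(kQ))_{dg}$ and $\ac(\cp)_{dg}$), where the stabilization $\widetilde{\can}$ is built in, and your reduction discards exactly this feature and dead-ends at a false claim. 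Compounding this, even on its own terms your argument only ``grants the intertwining'' after flagging it as the expected obstacle; so the one step that carries the mathematical content of the lemma is both misformulated and left unproved.
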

\begin{proof}
Let $\tilde{\phi}$ be an inverse of the quasi-functor $\tilde{\phi}^{-1}$. In view of the universal property of the triangulated hull (see Section 9.4 of \cite{Keller triang}) it is enough to show that the diagram

\begin{equation*}
\xymatrix{
\ac(\cp)_{dg} \ar[r]^-{M_{\tilde{\phi}}}\ar[d]_{M_{\widetilde{F}_*}} & C^b(\proj kQ)_{dg}  \ar[d]^{M_{\widetilde{F}}}\\
\ac(\cp)_{dg} \ar[r]^-{M_{\tilde{\phi}}} &C^b(\proj kQ)_{dg}.
}
\end{equation*}
is commutative up to isomorphism, where $M_X$ is the quasi functor associated to a dg functor $X$. By construction we have that $\widetilde{F} \phi \cong \phi \widetilde{F}_*$. Thus there is a isomorphism of quasi functors
\begin{equation*}
M_{\tilde{F}} M_{\tilde{\phi}} \cong M_{\tilde{\phi}} M_{\tilde{F}_*}.
\end{equation*}
\end{proof}

\begin{proof}[Proof of \thref{Frobenius model orbit}]
$(i)$ By \thref{a quasi-isomorphism} the category $\ul{\ce}/\ul{F_*}$ is triangulated and triangle equivalent to $\cd_Q/F$. By  Theorem 42 of \cite{Najera Frobenius orbit}, $\eafast$ admits the structure of a Frobenius category whose stable category is triangle equivalent to $\cd_Q / F$.

$(ii)$ It is clear that $\eafast$ is $k$-linear. By Theorem 42 of \cite{Najera Frobenius orbit}, we know that it is Krull-Schmidt (see also Lemma 35 of loc. cit.). It is $\Ext^1$-finite since $\cd_Q/F$ is $\Hom$-finite. So $\eafast$ satisfies condition (P0). 

We know that the canonical projection $p:\cd_Q \to \cd_Q/F$ is exact. 
Then by the proof of Proposition 1.3 of \cite{BMRRT} we have that $\cd_Q/F$ satisfies condition (P1) and that its AR quiver is $\Z Q / F$. Therefore, $\ce \corb F_*$ satisfies (P1) as well. 

Let $p(z^{\wedge})$ be an indecomposable  projective object of $\ce \corb F_*$ (See \thref{indecomposables of gpr} and \thref{indecomposables orbit}). So $z$ is some frozen vertex of $\Z Q_C$. 
Let $f$ be the morphism in $\ce$ corresponding to the arrow $z \to \sigma^{-1}(z)$ in $\Z Q_C$ under the Yoneda embedding $\cR_C \to \ce$. 
By \thref{Frobenius models}, $f$ is an irreducible morphism. We claim that the image of $f$ under the canonical projection $p: \ce \to \eafast$ is an irreducible morphism. Let $p(x^{\wedge}) \in \eafast $ be an indecomposable object which is not isomorphic to $p(z^{\wedge})$. Let $h: p(z^{\wedge}) \to p(x^{\wedge})$ be a non-zero morphism in $\eafast$.
Then $h=(h_i)$, with $h_i : z^{\wedge} \to F_{*}^i(x^{\wedge})$. Since $p(z^{\wedge})$ is not isomorphic to $p(x^{\wedge})$, we have that $F^i(x)\neq z$ for all $i\in \Z$. Since $f$ is irreducible, we have that $h_i= g_i \circ f$ for some $g_i: \sigma ^{-1}(z)^{\wedge} \to F_*^i(x^{\wedge}) $. Then $h= (g_i)\circ p(f)$. This shows that $p(f)$ is left almost split
and that $p(f)$ is the only irreducible morphism with source $p(z^{\wedge})$. Similarly, let $g$ be the irreducible morphism in $\ce$ corresponding to the arrow $\sigma(z)\to z$ in $\Z Q_C$. As before we conclude that $p(g)$ is the only irreducible morphism of $\eafast$ ending on $\sigma(z)^{\wedge}$. This shows that the map $p(x^{\wedge})\to x$ induces an isomorphism between the AR quiver of $\eafast$ and the quiver $\Z Q_C/F$. Moreover, the frozen vertices of $\Z Q_C$ correspond to the projective-injective objects of $\eafast$. 

To complete the proof of part $(ii)$ it only remains to show that $\eafast$ is standard. We have to check that there are no relations other that those induced by the mesh relations associated to non-projective indecomposable objects. Let $r\in \eafast(X,Y)$ be a relation in $\eafast$. So $r$ is a zero morphism that can be expressed as a finite sum non-zero morphism, that is
\[
r=\sum_{i=1}^k f_i
\]
where $f_i =(f_{ij})_{j\in \Z}\in \eafast (X,Y)$ is a non-zero morphism for each $i$ and $f_{ij}\in \ce (X,F^j(Y))$. In particular, we have that $r_{j}:=f_{1j} + \cdots f_{kj}=0$ for all $j \in \Z$, \ie $f_{1j} + \cdots f_{kj}$ is a relation in $\ce$. Since $\ce$ is standard we have that $r_{ij}$ is induced by the non-frozen mesh relations of its AR quiver. Since $p:\ce \to \eafast$ is exact, it follows that the same holds for $\eafast$.

$(iii)$ It only remains to check that $C$ is admissible. Let $\pi(x^{\wedge}) \overset{(f_i)}{\longrightarrow} \pi(I)$ be an inflation, where $I$ is an injective object of $\ce$ and $f_i:x^{\wedge}\to F^i(I)$ is a morphism of $\ce$. In particular, there is a path $p$ from $x$ to $\sigma^{-1}(c)$ for some $c\in C$. To finish the proof we can proceed exactly as in the proof of \thref{Frobenius models} given \cite{Keller Scherotzke 1}. 
\end{proof}

\section{Categorification}

In this section we prove \thref{categorification}. First, we recall the notion of cluster algebra with universal coefficients. We assume that the reader has some familiarity with cluster algebras, for general background on this theory we refer to the articles \cite{clusters 1,clusters 4}.

\subsection{Universal coefficients for finite-type quivers} Let $n$ be a positive integer, $[1,n]:=\lbrace 1, \dots , n \rbrace$ and $\T^n$ be the $n$-regular tree. To construct a rank $n$ cluster algebra with coefficients on a semifield $\P$ one needs to specify the following initial information:
\begin{itemize}
\item a quiver $Q$ without cycles of length $1$ or $2$ such that $Q_0=[1,n]:= \lbrace 1, \dots, n\rbrace$; 
\item an $n$-tuple of coefficients $\mathbf{y}=(y_1,\dots y_n)\in \P^n$.
\end{itemize}
We always assume that the initial cluster is $(x_1,\dots, x_n)$. The cluster algebra associated to this data will be denoted by $\ca_{\P}(Q,\mathbf{y})$ and is a subalgebra of $\Z \P[x_1^{\pm 1}, \dots,x_n^{\pm 1} ]$. The set of cluster variables of $\ca_{\P}(Q,\mathbf{y})$ will be denoted by $\lbrace x_{i,t}\rbrace$ where $i \in [1,n]$ and $t \in \T$. Similarly, its set of coefficients will be denoted by $\lbrace y_{i,t}\rbrace$.

\begin{definition}
An \emph{ice quiver} consist of a pair $(Q,f)$, where $Q$ is a quiver together with a distinguished subset $f \subset Q_0$ whose elements are called frozen vertices such that there are no arrows between any two vertices of $f$. When we refer to ice quivers we will assume that $Q_0\setminus f = [1,n]$ and $f=[n+1,n+m]$
\end{definition}

\begin{remark}
Cluster algebras with coefficients in a tropical semifield are called geometric cluster algebra. Note that a frozen quiver determines the initial information of a geometric cluster algebra. 
Therefore, we will systematically define geometric cluster algebras using ice quivers.
\end{remark}

\begin{definition}
Let $\ca= \ca_{\P}(Q,\mathbf{y})$ and $\ol{\ca}=\ol{\ca}_{\P'}(Q,\mathbf{y'})$ be cluster algebras with sets of cluster variables $(x_{i,t})$ and $(\ol{x}_{i,t})$, respectively. We say that $\ol{\ca}$ is obtained from $\ca$ by a \emph{coefficient specialization} if there is a (unique) homomorphism of multiplicative groups $\varphi: \P \to \P'$ that extends to a (unique) ring homomorphism $\varphi: \ca \to \ol{\ca}$ such that $\varphi(x_{i,t})=\ol{x}_{i,t}$ for all $i$ and all $t$. We call $\varphi$ a coefficient specialization.
\end{definition}

\begin{remark}
By Proposition 12.2 of \cite{clusters 4} we know that $\varphi$ is a coefficient specialization if and only if $\varphi(y_{i,t})=\ol{y_{i,t}}$ and $\varphi(y_{i,t}\oplus 1)=\ol{y_{i,t}}\oplus 1$ for all $y_{i,t}$. Here $t \mapsto (\mathbf{y_t},B_t)$ (resp. $t \mapsto (\mathbf{\ol{y}_t},B_t)$) is the underlying $Y$-pattern for $\ca$ (resp. $\ol{\ca}$).
\end{remark}
\begin{definition}
We say that a cluster algebra $\ca_{\P}(Q,\mathbf{y})$ has \emph{universal coefficients} if every cluster algebra of the form $\ol{\ca}_{\P'}(Q,\mathbf{y'})$ is obtained from $\ca_{\P}(Q,\mathbf{y})$ by a (unique) coefficient specialization.
\end{definition}
The existence of a cluster algebra with universal coefficients is not clear. If it exists, then it can be regarded as an invariant of the mutation class of a quiver (these cluster algebras do not depend on the choice of initial seed). In \cite{clusters 4}, the authors constructed cluster algebras with universal coefficnets for any \emph{cluster-finite quiver} (\ie a quiver mutation equivalent to a Dynkin quiver). In a series of papers \cite{Reading universal, Reading surfaces, Reading Markov}, Reading studied the existence of universal coefficients for cluster algebras with geometric coefficients beyond finite-type. For the convenience of the reader, we recall the construction of finite-type cluster algebras with universal coefficient worked out in \cite{clusters 4}.

\begin{definition}
Let $\Delta$ be a simply-laced Dynkin and $\Phi^{\Delta}$ its root system. We fix a set of simple roots $\lbrace \alpha_1, \dots , \alpha_n \rbrace$ and denote by $\Phi_{\geq -1}$ the set of \emph{almost positive roots}, \ie the union of the set of positive roots $\Phi_{>0}$ and the set of negative simple roots. If $\alpha\in \Phi^{\Delta}$, then $[\alpha:\alpha_i]$ denotes the multiplicity of $\alpha_i$ in $\alpha$. Let $\P_{\Delta}^{\rm univ}:=\Trop(p_{\alpha}:\alpha \in \Phi_{\geq -1})$ be the tropical semifield whose generators $p_{\alpha}$ are labeled by the set $\Phi_{\geq -1}$. 
\end{definition}

Recall that every bipartite orientation $Q$ of $\Delta$ is endowed with a function $\varepsilon: Q_0\to \lbrace +, - \rbrace$ that associates a \emph{sign} to each vertex:
\begin{equation*}
\varepsilon (i)=
\begin{cases}
+ & \text{if } i \ \text{is a source},\\
- & \text{if } i \ \text{is a sink}.
\end{cases}
\end{equation*}

\begin{theorem} $($\cite[Theorem 12.4]{clusters 4}$)$
\thlabel{th universal coefficients}
Let $Q$ be a bipartite orientation of a simply laced Dynkin diagram $\Delta$. Let $\ca_{Q}^{\rm univ}$ be the cluster algebra with coefficients defined over $\P_{\Delta}^{\rm univ}$ whose initial quiver is $Q$ and whose $n$-tuple of initial coefficients is $(y_1,\dots,y_n)$ where
\begin{equation}
\label{initial universal coefficients}
y_j = \prod_{\alpha \in \Phi_{\geq -1}}
p_{\alpha}^{\varepsilon(j) [\alpha : \alpha_j]} .
\end{equation}
Then $\ca_{Q}^{\text{univ}}$ has universal coefficients.
\end{theorem}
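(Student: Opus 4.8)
The plan is to verify the universal property directly from the characterization of coefficient specializations recalled above (Proposition 12.2 of \cite{clusters 4}). Concretely, it suffices to show that for every cluster algebra $\ol{\ca}_{\P'}(Q,\mathbf{y'})$ sharing the initial quiver $Q$ there is a \emph{unique} multiplicative homomorphism $\varphi:\P_{\Delta}^{\rm univ}\to\P'$ with $\varphi(y_{j,t})=\ol{y}_{j,t}$ and $\varphi(y_{j,t}\oplus 1)=\ol{y}_{j,t}\oplus 1$ for all $j\in[1,n]$ and all $t\in\T^n$. Since $\P_{\Delta}^{\rm univ}=\Trop(p_\alpha:\alpha\in\Phi_{\geq -1})$ is free as a multiplicative group on the generators $p_\alpha$, such a $\varphi$ is determined by its values $\varphi(p_\alpha)$; the problem therefore splits into (i) computing the coefficients $y_{j,t}$ of $\ca_Q^{\rm univ}$ explicitly as tropical monomials in the $p_\alpha$, and (ii) extracting from the resulting system a forced and consistent assignment $p_\alpha\mapsto\varphi(p_\alpha)$.

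For step (i) I would run the $Y$-pattern dynamics starting from the initial coefficients $y_j=\prod_\alpha p_\alpha^{\varepsilon(j)[\alpha:\alpha_j]}$ inside the tropical semifield. Over a tropical semifield every $y_{j,t}$ is a Laurent monomial in the $p_\alpha$, and the auxiliary element $y_{j,t}\oplus 1$ is again such a monomial (its ``positive part''); hence both quantities appearing in the specialization criterion are governed purely by exponent vectors. The input special to the Dynkin case is the finite-type classification: the exponent data of the coefficient tuples is controlled by the $\mathbf{c}$-vectors of the seeds, which, starting from the bipartite seed $Q$, range over the roots of the root system $\Phi^\Delta$, while the cluster variables $x_{j,t}$ are parametrized by the almost positive roots $\Phi_{\geq -1}$. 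Substituting the bipartite sign function $\varepsilon$ and carrying out the piecewise-linear tropical arithmetic turns each $y_{j,t}$ into an explicit monomial $\prod_\alpha p_\alpha^{m_\alpha(j,t)}$ whose exponents are read off from the root attached to $(j,t)$.

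For step (ii), given $\ol{\ca}_{\P'}(Q,\mathbf{y'})$ the $Y$-mutation rule determines all the elements $\ol{y}_{j,t}$ and $\ol{y}_{j,t}\oplus 1$ of $\P'$ from the initial tuple $\mathbf{y'}$. Imposing $\varphi(y_{j,t})=\ol{y}_{j,t}$ for the monomials found in step (i) yields a system of equations for the unknowns $\varphi(p_\alpha)$; the finite-type root combinatorics guarantees that this system is \emph{unisolvent}, \ie each generator $p_\alpha$ is pinned to a definite element of $\P'$, which simultaneously gives existence of a solution and its uniqueness. With $\varphi$ so defined one checks the full compatibility $\varphi(y_{j,t})=\ol{y}_{j,t}$ and $\varphi(y_{j,t}\oplus 1)=\ol{y}_{j,t}\oplus 1$ for \emph{all} $(j,t)$ by induction on the distance from the initial vertex in $\T^n$: since tropical $Y$-mutation is a monomial operation, the multiplicative map $\varphi$ is automatically equivariant for it, so it intertwines the $Y$-pattern of $\ca_Q^{\rm univ}$ with that of $\ol{\ca}$.

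The main obstacle is step (i), and more precisely the claim underlying unisolvence in step (ii): one must show that the exponent vectors of the coefficients $y_{j,t}$ of $\ca_Q^{\rm univ}$, as $(j,t)$ varies, realize the root system $\Phi^\Delta$ richly enough that the $|\Phi_{\geq -1}|$ generators $p_\alpha$ are individually recoverable from the coefficient data. This is the heart of the finite-type argument; it rests on the $\Phi_{\geq -1}$-parametrization of cluster variables and the sign-coherence of $\mathbf{c}$-vectors, and once these explicit monomials are in hand, the construction and uniqueness of the coefficient specialization are essentially formal.
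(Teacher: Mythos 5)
A preliminary remark: the paper does not actually prove this statement -- it is quoted from \cite[Theorem 12.4]{clusters 4} -- so your proposal can only be measured against Fomin--Zelevinsky's argument. Against that yardstick it has a genuine gap, concentrated in your step (ii). You claim that the system of equations $\varphi(y_{j,t})=\ol{y}_{j,t}$ alone is ``unisolvent'', \ie pins down each value $\varphi(p_\alpha)$. This is already false in type $A_1$: there $\P_{A_1}^{\rm univ}=\Trop(p_{\alpha_1},p_{-\alpha_1})$ is free of rank two, the initial coefficient is $y_1=p_{\alpha_1}p_{-\alpha_1}^{-1}$, and the entire $Y$-pattern consists of $y_1^{\pm 1}$, so the coefficients generate only a rank-one subgroup and cannot determine $\varphi$ on the generators. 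What pins the generators down are precisely the auxiliary elements: tropically $y_1\oplus 1=p_{-\alpha_1}^{-1}$ and $y_1^{-1}\oplus 1=p_{\alpha_1}^{-1}$, so the conditions $\varphi(y_{j,t}\oplus 1)=\ol{y}_{j,t}\oplus 1$ force $\varphi(p_{-\alpha_1})=(\ol{y}_1\oplus 1)^{-1}$ and $\varphi(p_{\alpha_1})=(\ol{y}_1^{-1}\oplus 1)^{-1}$, and existence of $\varphi$ then hinges on the compatibility $(\ol{y}_1\oplus 1)(\ol{y}_1^{-1}\oplus 1)^{-1}=\ol{y}_1$, which is a semifield identity valid in $\P'$. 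This is the actual shape of the finite-type argument: each generator $p_\alpha$ is recovered as a monomial in the elements $y_{j,t}$ \emph{and} $y_{j,t}\oplus 1$ of the universal pattern (computed explicitly along the bipartite belt via the $\tau_\pm$-action), and consistency of the resulting assignment amounts to a finite family of identities that hold in every $Y$-pattern over every semifield.

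The second, related, flaw is the step you use to close your induction: ``since tropical $Y$-mutation is a monomial operation, the multiplicative map $\varphi$ is automatically equivariant for it.'' Coefficient mutation multiplies $y_j$ not only by a power of $y_k$ but also by a power of $y_k\oplus 1$; it involves the semifield addition, and a homomorphism of multiplicative groups has no reason whatsoever to commute with $\oplus$. This is exactly why Proposition 12.2 of \cite{clusters 4} (which you quote) imposes $\varphi(y_{j,t}\oplus 1)=\ol{y}_{j,t}\oplus 1$ as a separate requirement rather than deriving it from multiplicativity. Consequently your induction on the distance from the initial vertex of $\T^n$ does not propagate: to pass from a seed to an adjacent one you need the $\oplus 1$-compatibility at the current seed, and establishing that compatibility at \emph{all} seeds is the heart of the theorem, not a formality. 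A repaired argument must replace ``unisolvence from the $y_{j,t}$'' by the two-step scheme above: solve for each $p_\alpha$ in terms of the $y_{j,t}$ and $y_{j,t}\oplus 1$ of the universal pattern, define $\varphi(p_\alpha)$ by the same expression in $\ol{y}_{j,t}$ and $\ol{y}_{j,t}\oplus 1$, and verify -- using the explicit $\tau_\pm$-computation of the universal coefficients and the finiteness (Zamolodchikov periodicity) of the $Y$-system, which makes the verification finite -- that all the defining relations among these expressions hold in an arbitrary $Y$-pattern with initial quiver $Q$.
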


\begin{remark}
\thlabel{a remark on frozen quivers}
Since $\ca_{Q}^{\rm univ}$ turns out to be a geometric cluster algebra it can be completely described by an ice quiver. The equation \eqref{initial universal coefficients} allows us to describe at once an ice quiver determining $\ca_{Q}^{\rm univ}$. 
\end{remark}

\subsection{Frobenius 2-CY realizations}
Throughout this subsection we will assume that all categories are Krull-Scmidt. We refer the reader to \cite{Krause Krull-Schmidt} for background on Krull-Schmidt categories.
Let $\ce$ be a Frobenius category. We say that $\ce$ is \emph{stably 2-Calabi-Yau} (stably 2-CY for short) if its stable category $\ul{\ce}$ is $\Hom$-finite and 2-CY as a triangulated category. This means that for every pair of objects $X$ and $Y$ of $\ul{\ce}$ there is a bifunctorial isomorphism 
\begin{equation*}
\Hom_{\ul{\ce}}(X,Y) \cong D \Hom_{\ul{\ce}}(Y, \Sigma^2 (X)),
\end{equation*}
where $\Sigma$ is the suspension functor of $\ul{\ce}$. Note that every stably 2-CY Frobenius category is $\Ext^1$-finite (\ie for every pair of objects $X$ and $Y$ of $\ce$ the dimension of $\Ext_{\ce}^1(X,Y)$ is finite). 

\begin{definition}
Suppose that $\cc$ is either an $\Ext^1$-finite Frobenius category or a $\Hom$-finite triangulated category. A {\em cluster-tilting subcategory of $\cc$} is a full additive subcategory $\ct\subset \cc$ which is stable under taking direct factors and such that
\begin{itemize}
\item[$(i)$] for each object $X$ of $\cc$ the functors $\cc(X,?)|_{\ct}:\ct\to \Mod (k)$ and $\cc(?,X):\ct\op\to\Mod (k)$ are finitely generated; 
\item[$(ii)$] an object $X$ of $\ce$ belongs to $\ct$ if and only if we have $\Ext^1_{\cc}(T,X)=0$ for all objects $T$ of $\ct$.
\end{itemize}
An object $T$ is called {\em cluster-tilting} if it is basic (its indecomposable summands are pairwise distinct) and $\add(T)$ is a cluster-tilting subcategory. Equivalently, $T$ is cluster-tilting if and only if it is rigid and each object $X$ satisfying $\Ext^1_{\ce}(T,X)=0$ belongs to $\add(T)$. 
\end{definition}

\begin{remark}
We will identify a basic cluster-tilting object $T$ with the cluster-tilting subcategory $\add(T)$ and occasionally refer to $T$ as a cluster-tilting  subcategory.
\end{remark}

The following result is straightforward and its proof is left to the reader.

\begin{lemma}
\thlabel{on ct subcategories}
Let $\ce$ be an $\Ext^1$-finite Frobenius category. Then the natural projection $\ce\to \ul{\ce}$ induces a bijection between the cluster-tilting subcategories of $\ce$ and the cluster-tilting subcategories of $\ul{\ce}$.
\end{lemma}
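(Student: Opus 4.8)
The plan is to exploit that the stabilization functor $\pi:\ce\to\ul{\ce}$ is the identity on objects, is full, sends every projective--injective object to zero, and induces isomorphisms $\Ext^1_\ce(X,Y)\iso\Ext^1_{\ul{\ce}}(\pi X,\pi Y)$ for all $X,Y$ (as recalled in Section~2). The first thing I would record is that every cluster-tilting subcategory $\ct$ of $\ce$ contains all projective--injective objects: if $P$ is injective then $\Ext^1_\ce(T,P)=0$ for every $T$, so condition (ii) forces $P\in\ct$. Granting this, I would define the two candidate maps: send a cluster-tilting subcategory $\ct\subset\ce$ to its image $\pi(\ct)\subset\ul{\ce}$, and send a cluster-tilting subcategory $\ul{\cs}\subset\ul{\ce}$ to its preimage $\pi^{-1}(\ul{\cs}):=\{X\in\ce:\pi X\in\ul{\cs}\}$. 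Both are full additive subcategories stable under direct factors; for $\pi(\ct)$ this uses the Krull--Schmidt hypothesis together with the fact that an indecomposable summand of some $\pi(T)$ lifts to a non-projective indecomposable summand of $T$.

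The heart of the argument is that condition (ii) transfers along $\pi$ in both directions, and here the $\Ext^1$-isomorphism does all the work. For the forward map, if $Y\in\ul{\ce}$ satisfies $\Ext^1_{\ul{\ce}}(S,Y)=0$ for all $S\in\pi(\ct)$, then, viewing $Y$ as an object $X$ of $\ce$, we get $\Ext^1_\ce(T,X)\cong\Ext^1_{\ul{\ce}}(\pi T,Y)=0$ for every $T\in\ct$ (since $\pi T\in\pi(\ct)$); condition (ii) for $\ct$ then gives $X\in\ct$, hence $Y\in\pi(\ct)$, while rigidity of $\pi(\ct)$ is the same computation read in reverse. The backward map is entirely symmetric: for $\cs=\pi^{-1}(\ul{\cs})$ the identity $\Ext^1_\ce(S,X)\cong\Ext^1_{\ul{\ce}}(\pi S,\pi X)$, together with the fact that every object of $\ul{\cs}$ is of the form $\pi S$ with $S\in\cs$, turns condition (ii) for $\ul{\cs}$ into condition (ii) for $\cs$.

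For condition (i), the direction $\ce\to\ul{\ce}$ is immediate from fullness: if the maps $X\to T_1,\dots,X\to T_m$ generate $\ce(X,?)|_\ct$, then any morphism $X\to S$ in $\ul{\ce}$ with $S\in\pi(\ct)$ lifts to $\ce$, factors through some $T_j$ there, and projects back, so the images generate $\ul{\ce}(X,?)|_{\pi(\ct)}$; the contravariant statement is dual. The delicate direction is $\ul{\ce}\to\ce$, and I expect this to be the main obstacle: because $\pi$ does \emph{not} preserve $\Hom$ and because $\cs$ now contains the projective--injective objects, one must additionally control the morphisms from $X$ into $\proj(\ce)$. Here I would use that $\ce$ has enough injectives to produce a left $\opname{inj}$-approximation $X\to I$, through which every map from $X$ to a projective--injective factors; combining this with a finite generating family for $\ul{\ce}(X,?)|_{\ul{\cs}}$ lifted to $\ce$ yields a finite generating family for $\ce(X,?)|_\cs$, provided $I$ can be taken to be a finite direct sum of indecomposables. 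In the Krull--Schmidt setting this finiteness of the injective hull is exactly the point that requires care (in our applications it holds because $\ce$ has only finitely many indecomposable projective--injectives), and the contravariant condition follows dually using that $\ce$ has enough projectives.

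Finally I would verify that the two maps are mutually inverse. The equality $\pi(\pi^{-1}(\ul{\cs}))=\ul{\cs}$ is clear since $\pi$ is surjective on objects. For $\pi^{-1}(\pi(\ct))=\ct$ the nontrivial inclusion uses that $\pi X\cong\pi T$ with $T\in\ct$ means $X\oplus P\cong T\oplus P'$ for projective--injectives $P,P'$; since $\ct$ contains $\proj(\ce)$ and is closed under direct factors, $X\oplus P\in\ct$ forces $X\in\ct$. This establishes the claimed bijection.
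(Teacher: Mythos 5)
The paper gives no proof of this lemma --- it is stated with the remark that ``the proof is left to the reader'' --- so there is no official argument to compare yours against; I can only judge your proposal on its own terms, and it is correct. Your structure is the natural one: projective--injectives lie in every cluster-tilting subcategory of $\ce$ by condition (ii); the isomorphisms $\Ext^1_{\ce}(X,Y)\cong\Ext^1_{\ul{\ce}}(X,Y)$ recalled in Section~2 transfer condition (ii) in both directions; fullness of the projection handles condition (i) downstairs; and the Krull--Schmidt assumption (in force throughout that subsection) gives both closure of $\pi(\ct)$ under direct factors and the equality $\pi^{-1}(\pi(\ct))=\ct$, via the standard fact that stably isomorphic objects differ only by projective--injective direct summands. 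Your one misstep is the hedge in the backward direction of condition (i): the proviso that the injective approximation $I$ of $X$ be a finite direct sum of indecomposables is automatic, not an extra hypothesis to be imposed. The enough-injectives axiom for the exact category $\ce$ already produces an inflation $X\to I$ with $I$ an object of $\ce$, every map from $X$ to a projective--injective (hence every map to an object of $\pi^{-1}(\ul{\cs})$ that vanishes stably) factors through it, and since $\ce$ is Krull--Schmidt this $I$ decomposes finitely; no assumption that $\ce$ has only finitely many indecomposable projective--injectives is needed. With that observation your argument proves the lemma in the stated generality, not merely ``in our applications,'' and the hedge can simply be deleted.
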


The quiver of an additive subcategory $\ct \subset \cc$ is denoted by $Q_{\ct}$. By definition, the set of vertices of $Q_{\ct}$ is the set of isomorphism classes of indecomposable objects in $\ct$. The arrows from the class of an object $X$ to the class of an object $Y$ corresponds to a basis of the space of irreducible maps $\rad(X,Y)/\rad^2(X,Y)$, where $\rad (\ ,\ )$ denotes the radical in $\ct$, \ie the ideal of all non-isomorphisms from $X$ to $Y$. The quiver of a subcategory $\ct$ of $\cc$ can be though of as an ice quiver by considering the vertices corresponding to the indecomposable projective objects of $\cc$ that lie in $\ct$ to be frozen and neglect the arrows between them. We will write $Q^{fr}_{\ct}$ for the resulting ice quiver. We say that $\cc$ has no loops (resp. 2-cycles) if the ice quiver of every cluster-tilting subcategory has no loops (resp. 2-cylces).

\begin{remark}
\thlabel{semiperfect rings}
Let $T$ be a basic object of $\cc$ and let $\ct=\add (T)$. Consider the basic algebra $A=\text{End}_{\cc}(T)$. If $\cc$ is Hom-infinite then in general $A$ will be infinite-dimensional. So it is not immediate that $A$ can be expressed as the quotient of a path algebra by an admissible ideal. Nevertheless, the fact that $\cc$ is Krull-Schmidt implies that $A$ is a semiperfect ring \cite[Corollary 4.4]{Krause Krull-Schmidt}. By definition, this means that every finitely generated right $A$-module has a projective cover. An equivalent condition is that every simple right $A$-module is of the form $eA / e\rad(A)$, where $e\in A$ is an idempotent (for more background on semi-perfect rings see \cite{AF}). Therefore, we can associate a quiver to $A$ computing the dimension of $\Ext^1(S,S')$ for every parir of simple $A$-modules $S$ and $S'$. Then the quiver of $A$ will be equal to $Q_{\ct}$.
\end{remark}

\begin{definition}[\cite{BIRS}]
\label{def:ExactClusterStructure}
Let $\ce$ be an $\Ext^1$-finite Frobenius category whose stable category is 2-CY. The cluster-tilting subcategories of $\ce$ {\em determine a cluster structure on $\ce$} if the following conditions hold:
\begin{itemize}
\item[0)] There is at least one cluster-tilting subcategory in $\ce$.
\item[1)] For each cluster-tilting subcategory $\ct$ of $\ce$ and each non-projective indecomposable object $X$ of $\ct$, there is a non-projective indecomposable object $X^*$ of $\ce$, unique up to isomorphism and not isomorphic to $X$, such that the additive subcategory $\ct'=\mu_X(\ct)$ of $\ce$ whose set of indecomposable objects
\[
\ind(\ct')=\ind(\ct)\setminus\{X\} \cup \{X^*\} 
\]
is a cluster-tilting subcategory.
\item[2)] In the situation of 1), there are conflations
\begin{center}
\begin{tabular}{lll}
$0\longrightarrow X^* \overset{g}\longrightarrow E \overset{h}\longrightarrow X \longrightarrow 0$
 &  and 
& $0\longrightarrow X \overset{s}\longrightarrow E' \overset{t}\longrightarrow X^*  \longrightarrow 0$,
\end{tabular}
\end{center}
where $h$ and $t$ are minimal right $\add (\ct \setminus  \lbrace X \rbrace)$-approximations
and $g$ and $s$ are minimal left $\add (\ct \setminus  \lbrace X \rbrace)$-approximations. We call these 
sequences the \emph{exchange conflations associated} to $\ct$ and $\ct'$.
\item[3)] $\ce$ has no loops or 2-cycles.
\item[4)] For each cluster-tilting subcategory $\ct$ of $\ce$ and each non-projective indecomposable object $X$ of $\ct$ we have that $Q^{fr}_{\mu_X(\ct)}$ and $\mu_{X}(Q^{fr}_{\ct})$ are related by quiver mutation, \ie $Q^{fr}_{\mu_X(\ct)}=\mu_{X}(Q^{fr}_{\ct})$. We say that $\ce$ (resp.)
 \end{itemize}
 \end{definition}

\begin{theorem}$($\cite[Theorem II.1.6]{BIRS}$)$
Let $\ce$ be a stably 2-CY Frobenius category with some cluster-tilting subcategory. If $\ce$ has no loops or 2-cycles, then the cluster-tilting subcategories determine a cluster structure for $\ce$.
 \end{theorem}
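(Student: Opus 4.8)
The plan is to verify in turn conditions 0)--4) of Definition \ref{def:ExactClusterStructure}; conditions 0) and 3) are exactly the two hypotheses, so the substance lies in 1), 2) and 4). The first move is to transport the problem to the stable category: by \thref{on ct subcategories} the projection $\ce \to \ul{\ce}$ gives a bijection between the cluster-tilting subcategories of $\ce$ and those of the $\Hom$-finite 2-CY triangulated category $\ul{\ce}$, and it carries conflations to triangles. Hence the existence/uniqueness assertion 1) and the combinatorial assertion 4) may be tested in $\ul{\ce}$, while 2) requires, in addition, lifting the relevant triangles back to conflations in $\ce$. Since $\ce$ may be $\Hom$-infinite, the quivers appearing in 3)--4) are understood through the semiperfect structure of the relevant endomorphism rings, \cf \thref{semiperfect rings}.

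For condition 1) I would carry out the Iyama--Yoshino-type mutation inside $\ul{\ce}$. Given a cluster-tilting subcategory $\ct$ and a non-projective indecomposable $X$, put $\ct'=\add(\ct\setminus\{X\})$, choose a minimal right $\ct'$-approximation $E\to X$ and complete it to a triangle $X^*\to E\to X\to \Sigma X^*$; dually, a minimal left $\ct'$-approximation yields the second triangle. Using the rigidity of $\ct$ together with the 2-CY isomorphism $\Ext^1_{\ul{\ce}}(Y,Z)\cong D\,\Ext^1_{\ul{\ce}}(Z,Y)$, one checks that $\mu_X(\ct)=\add(\ct'\cup\{X^*\})$ is rigid, that $X^*$ is indecomposable and not isomorphic to $X$, and that $\mu_X(\ct)$ is again cluster-tilting. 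Uniqueness of $X^*$ follows from the computation $\dim\Ext^1_{\ul{\ce}}(X,X^*)=\dim\Ext^1_{\ul{\ce}}(X^*,X)=1$ forced by 2-CY duality: any other complement must sit in the same pair of exchange triangles and is therefore isomorphic to $X^*$.

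For condition 2) I would lift the two exchange triangles of $\ul{\ce}$ to conflations in $\ce$. Because the triangulated structure on $\ul{\ce}$ is the one induced by the Frobenius exact structure, each exchange triangle is the image of a conflation once the middle term is corrected by a projective-injective summand; as every projective-injective object lies in every cluster-tilting subcategory, this correction stays inside $\add(\ct\setminus\{X\})$, so the approximation and minimality properties survive and $h,t$ become minimal right approximations and $g,s$ minimal left approximations, exactly as demanded.

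The hard part will be condition 4), the equality $Q^{fr}_{\mu_X(\ct)}=\mu_X(Q^{fr}_\ct)$ with Fomin--Zelevinsky quiver mutation. Here I would express the arrows of the quiver of a cluster-tilting subcategory through dimensions of spaces of irreducible maps and read them off from the multiplicities of indecomposable summands occurring in the middle terms of the exchange triangles, obtained by applying $\Hom_{\ul{\ce}}(\ct,-)$ and $\Hom_{\ul{\ce}}(-,\ct)$ to those triangles. The three regimes of the mutation rule must then be matched: the arrows incident to the mutated vertex get reversed because the two exchange triangles interchange the roles of $X$ and $X^*$, while for a pair of vertices $i,j$ distinct from $X$ the count is corrected by the paths through $X$, producing the term $[b_{iX}]_+[b_{Xj}]_+-[b_{jX}]_+[b_{Xi}]_+$. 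The no-loops/no-2-cycles hypothesis is precisely what guarantees that the associated exchange matrix is skew-symmetric and that the cancellation of opposite arrows reproduces the $\max$ in the mutation formula. This bookkeeping is the technical core of the proof.
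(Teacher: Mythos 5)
You should first be aware that the paper does not prove this theorem at all: it is quoted, with attribution, from \cite[Theorem II.1.6]{BIRS}, and the underlying mutation theory is due to Iyama and Yoshino \cite{Iyama Yoshino}. So there is no internal proof to compare your argument against; the only meaningful comparison is with the original BIRS argument, whose overall strategy (mutation in the stable 2-CY triangulated category, lifting exchange triangles to conflations, matching quivers of cluster-tilting subcategories with Fomin--Zelevinsky mutation) your sketch does reproduce in outline.

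Measured against that argument, however, your proposal has genuine gaps. In condition 1), the uniqueness of the complement $X^*$ does not follow from the dimension count you give: 2-CY duality only yields the symmetry $\Ext^1_{\ul{\ce}}(X,X^*)\cong D\,\Ext^1_{\ul{\ce}}(X^*,X)$; the fact that this space is one-dimensional is itself a consequence of the no-loop hypothesis rather than of duality, and even granting it you would still have to show that an \emph{arbitrary} complement of $\add(\ct\setminus\{X\})$ sits in the same exchange triangles --- which is exactly what is to be proved. This uniqueness is Iyama--Yoshino's theorem that an almost-complete cluster-tilting subcategory is contained in precisely two cluster-tilting subcategories, established in \cite{Iyama Yoshino} by a subfactor-category argument, not by a dimension count. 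Second, condition 4) is the real core of the theorem --- it is the only point, among the things to be verified, where the no-loops/no-2-cycles hypothesis enters essentially --- and your proposal does not prove it; it only lists what would have to be checked. The identification of arrow counts coming from the exchange triangles with the Fomin--Zelevinsky rule, including the correction term $[b_{iX}]_+[b_{Xj}]_+$ and the cancellations hidden in the $\max$, occupies several pages of \cite{BIRS} and cannot be compressed into ``bookkeeping.'' (Your lifting step in condition 2) also needs care: one must check that a minimal approximation in $\ul{\ce}$, corrected by projective-injective summands, remains a \emph{minimal} approximation in $\ce$; this is true but requires an argument.) As it stands, your text is an accurate roadmap of the known proof rather than a proof.
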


\begin{definition} \cite{Fu Keller}
A Frobenius category $\ce$ is a \emph{$2$-Calabi-Yau realization} of the cluster algebra associated to an ice quiver $(Q,f)$ if $\ce$ has a cluster structure and has a cluster-tilting object $T$, such that the ice quiver of $\add (T)$ is $(Q,f)$.
\end{definition}

\subsection{The quiver of a cluster-tilting subcategory.}

We return to the context of Nakajima categories and assume that $Q$ is a Dynkin quiver. For the rest of the paper $F$ will denote the autoequivalence $\Sigma \circ \tau^{-1}:\cd_Q\to \cd_Q$. Moreover, we let $C \subset \Z Q$ be an $F$-invariant configuration, and denote by $\ce$ the Frobenius category $\gpr(\cs_C) $ and by $F_{\ast}:\ce \to \ce$ the exact functor extending $F$.

\begin{lemma}
\thlabel{ct subcategories}
The canonical projection $p:\cd_Q\to \cc_Q$ induces a bijection between the cluster-tilting subcategories of $\cd_Q$ and the cluster-tilting subcategories of $\cc_Q$.
\end{lemma}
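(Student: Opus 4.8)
The plan is to exhibit $\cc_Q$ as the \emph{completed} orbit category $\cd_Q\corb F_*$ in the sense of Section 2.5, and then invoke the general machinery already developed in the excerpt together with the structure theory of $\cd_Q$ for Dynkin $Q$. First I would recall that for a Dynkin quiver the cluster category $\cc_Q=\cd_Q/F$ is $\Hom$-finite and equivalent to its triangulated hull (this is the $n=1$ instance of \thref{higher cluster categories}, with $F=\Sigma\circ\tau^{-1}$), so the setup of \thref{Frobenius model orbit} applies. The key structural input is \thref{indecomposables orbit}: every indecomposable of the (completed) orbit category is the image under the canonical projection $p$ of an indecomposable of $\cd_Q$, so $p$ is automatically \emph{surjective} on indecomposables, hence on cluster-tilting subcategories. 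Thus the real content is injectivity together with the preservation of the two defining properties (rigidity and the maximality condition $(ii)$ in the definition of cluster-tilting subcategory).

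The central computation is the comparison of $\Ext^1$ (equivalently $\Hom$ after a shift) along $p$. Concretely, for indecomposables $X,Y$ of $\cd_Q$ one has
\[
\cc_Q(p(X),p(Y))=\bigoplus_{l\in\Z}\cd_Q\bigl(X,F^l(Y)\bigr),
\]
and since $F=\Sigma\circ\tau^{-1}$ is the defining autoequivalence of the 2-CY structure, the $\Ext^1$ groups in $\cc_Q$ likewise decompose as $\bigoplus_{l}\Ext^1_{\cd_Q}(X,F^l(Y))$. The finite-type hypothesis guarantees that these sums are finite, so rigidity in $\cc_Q$ can be tested orbit-by-orbit in $\cd_Q$. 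I would then argue that a subcategory $\ct\subset\cd_Q$ is rigid precisely when its $F$-orbit closure is rigid, which, because $F$ already appears in the 2-CY duality, amounts to saying that $\Ext^1_{\cd_Q}(X,F^l(Y))=0$ for all $l$. Using this one checks directly that $p$ sends cluster-tilting subcategories to cluster-tilting subcategories and that, conversely, the preimage of a cluster-tilting subcategory of $\cc_Q$ (namely the full additive subcategory generated by all $F^l$-translates of its indecomposable lifts) is a cluster-tilting subcategory of $\cd_Q$; this gives a two-sided inverse to $p$ and establishes the bijection. The maximality condition $(ii)$ transfers along $p$ by the same orbit-decomposition of $\Ext^1$, since $\Ext^1_{\cc_Q}(p(T),p(X))=0$ for all $T\in\ct$ forces $\Ext^1_{\cd_Q}(T',X)=0$ for every $F$-translate $T'$, whence $X$ lies in the (translate-closed) lift of $\ct$.

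The hard part, and the step I expect to be the main obstacle, is verifying that a cluster-tilting subcategory of $\cd_Q$ must be stable under the autoequivalence $F$, so that its image under $p$ is genuinely cluster-tilting and the correspondence is bijective rather than merely surjective. A priori a cluster-tilting subcategory $\ct$ of $\cd_Q$ need not satisfy $F(\ct)=\ct$, and one must show that the maximality condition, combined with the 2-CY duality $\Hom_{\cd_Q}(X,Y)\cong D\Hom_{\cd_Q}(Y,\Sigma^2 X)$ and the fact that $F^l$ acts on $\cd_Q$, forces any rigid maximal object to be $F$-invariant as a subcategory (equivalently, to contain the $F$-orbit of each of its indecomposables). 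I would handle this by a careful bookkeeping argument: given a lift of a cluster-tilting object of $\cc_Q$, its $F$-orbit is again rigid in $\cd_Q$ by the orbit-decomposition above, and maximality in $\cd_Q$ then pins down the orbit-closed subcategory uniquely. Once $F$-stability is in hand, the bijection follows formally, and \thref{on ct subcategories} can be combined with it to relate the cluster-tilting subcategories of the Frobenius models themselves.
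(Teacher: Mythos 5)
Your proposal contains a genuine gap, and it also leans on a false intermediate claim.

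First, the assertion that ``a subcategory $\ct\subset\cd_Q$ is rigid precisely when its $F$-orbit closure is rigid'' is wrong: rigidity of the orbit closure is strictly stronger than rigidity. Take $Q=A_1$, so that the indecomposables of $\cd_Q$ are the $\Sigma^l k$, $l\in\Z$, with $\tau=\Sigma^{-1}$ and hence $F=\Sigma\circ\tau^{-1}=\Sigma^2$. The subcategory $\add\{k,\Sigma^{-3}k\}$ is rigid, since $\Ext^1(k,\Sigma^{-3}k)=\Hom(k,\Sigma^{-2}k)=0$ and $\Ext^1(\Sigma^{-3}k,k)=\Hom(k,\Sigma^{4}k)=0$; but its $F$-orbit closure contains both $k$ and $F^{2}(\Sigma^{-3}k)=\Sigma k$, and $\Ext^1(\Sigma k,k)=\Hom(\Sigma k,\Sigma k)\neq 0$. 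The 2-CY property of $\cc_Q$ only yields a symmetry of the orbit-summed groups $\bigoplus_l\Ext^1_{\cd_Q}(X,F^l(Y))$; it does not reduce orbit-rigidity to plain rigidity. Since you use this equivalence to argue that $p$ carries cluster-tilting subcategories of $\cd_Q$ to rigid subcategories of $\cc_Q$, the forward direction of your correspondence is not established.

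Second, and this is the essential gap: the step you yourself flag as ``the hard part'' --- that every cluster-tilting subcategory of $\cd_Q$ is $F$-stable --- is never actually proved. Your bookkeeping argument starts from a cluster-tilting object of $\cc_Q$, notes that the orbit closure of a lift is rigid, and says maximality ``pins down the orbit-closed subcategory uniquely''; this only shows that $\ol{\ct}\mapsto p^{-1}(\ol{\ct})$ is a well-defined injection from cluster-tilting subcategories of $\cc_Q$ onto \emph{$F$-stable} cluster-tilting subcategories of $\cd_Q$. It says nothing about an arbitrary cluster-tilting subcategory $\ct$ of $\cd_Q$, which is exactly what must be controlled for $\ct\mapsto p(\ct)$ to be well defined and for the correspondence to be a bijection. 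A direct attack is circular: to verify $F(T)\in\ct$ through condition $(ii)$ one needs $\Ext^1_{\cd_Q}(T',F(T))=0$ for all $T'\in\ct$, and Serre duality gives $\Ext^1_{\cd_Q}(T',F(T))\cong D\,\Ext^1_{\cd_Q}(T,F^{-2}(T'))$ --- again an $F$-twisted extension group of objects of $\ct$, unavailable unless stability is already known. The statement is also delicate: in the $A_1$ example above, $\add(\{\Sigma^{2l}k:l\geq 0\}\cup\{\Sigma^{-2l-3}k:l\geq 0\})$ is maximal rigid but not $F$-stable, so any proof must use condition $(ii)$ in an essential way. This missing step is precisely the content of Proposition 2.2 of \cite{BMRRT}, and that is all the paper does: its proof is a one-line citation of that result, together with the explicit correspondence $\add(T)\leftrightarrow\add(F^j(T)\mid j\in\Z)$. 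So either invoke that proposition, or supply a genuine proof of $F$-stability; as written, your argument establishes only one direction of the claimed bijection.
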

\begin{proof}
This is a direct consequence of \cite[Proposition 2.2]{BMRRT} where cluster-tilting subcategories are called $\Ext$-configurations. Explicitly, if $T$ is a basic cluster-tilting object of $\cc_Q$, then under this bijection the cluster-tilting subcategory $\add(T)$ corresponds to the cluster-tilting subcategory $\ct=\add ( F^j(T) \ |\  j\in \Z )$. Moreover, $\add(T)$ is canonically isomorphic to $\ct /F$.
\end{proof}

\begin{remark}
\thlabel{cts}
\thref{on ct subcategories} and \thref{ct subcategories} together tell us that  the cluster-tilting subcategories of  $\cd_Q,\ \cc_Q,\ \ce$ and $\ce \corb F_*$ are in bijection. Moreover, any cluster-tilting subcategory of $\ce \corb F_*$ is of the form $\ctt \corb F_*$ for some cluster-tilting subcategory $\ctt$ of $\ce$. From now on we fix such a subcategory $\ctt$ and denote its stable category by $\ct$. The diagram of Figure \ref{fig:bijections of ct subcategories} is helpful to keep track of the notation we will use. We let $\ct / F= \add(T)$, where $T=T_1\oplus \cdots\oplus T_n$ is a basic cluster-tilting object of $ \cc_Q$ and the $T_i$'s are its indecomposable summands. Moreover, if $P_1, \cdots, P_r$ are the indecomposable projective objects of $\ce\corb F_*$ and $\widetilde{T}=T\oplus P_1\oplus \cdots \oplus P_r$, then $\ctt \corb F_*$ is equivalent to $\add(\widetilde{T})$. We would like to compute the quiver of $\ctt \corb F_*$. In view of \thref{semiperfect rings} it is enough to compute an explicit (co)resolution of the simple $\ctt \corb F_*$-modules. Moreover, these simple modules are one-dimensional, supported on a single indecomposable object of $\ctt\corb F_*$.

\begin{figure}[htbp]
\[
\xymatrix@!0 @R=2.5pc @C=3.5pc{
 & \ctt  \ar[rr] \ar@{^{(}->}[dl] \ar[dd]|\hole & &  \ct 
\ar[dd] \ar@{^{(}->}[dl] \\
 \ce  \ar[dd] \ar[rr]  & &
\cd_Q  \ar[dd] & \\
& \ctt \corb F_{\ast} \ar[rr]|\hole \ar@{^{(}->}[dl] &    & \ct/F
\ar@{^{(}->}[dl] \\
 \eafast \ar[rr] &   & \cc_Q &
}
\]
\caption{Bijection between cluster-tilting subcategories.}
\label{fig:bijections of ct subcategories}
\end{figure}
\end{remark}

We consider the quotient category $\cd_Q/\ct$ and let 
\[
x_{\cd / \ct}^{\wedge}= \cd_Q/\ct (?,x) 
\]
be the $\cd_Q/\ct$-module associated to an object $x$ of $\cd_Q$. Note that if $x$ lies in $\ct$ then $x^{\wedge}_{\cd/\ct}$ is the zero module. The module $x_{\cd/\ct}^{\vee}$ is defined analogously. Both $x_{\cd/\ct}^{\wedge}$ and $x_{\cd/\ct}^{\vee}$ are in a natural way $\cR_C$-modules. Consider the finitely generated projective $\cR_C$-module 

\[
P_{x,\ct}:= \bigoplus_{\sigma(y)\in C} \cd_Q/\ct (y,x) \otimes \sigma(y)^{\wedge}.
\]
The multiplicity of $\sigma(y)^{\wedge}$ in $P_{x,\ct}$ equals the dimension of the vector space $\cd_Q/\ct (y,x)$. In a completely analogous way, we define the finitely cogenerated injective $\cR_C$-module
\[
I_{x,\ct}:= \prod_{\sigma^{-1}(y)\in C} D\cd_Q/\ct (x,y) \otimes \sigma^{-1}(y)^{\vee}.
\]
The modules of the form $x^{\wedge}_{\cd/\ct}$ and $x^{\vee}_{\cd/\ct}$ will be very helpful to compute the resolutions of the simple $\ctt$-modules. Note that in practice these modules can be computed quite explicitly. 

\begin{definition} (\cite{Keller Reiten})
Let $T=\bigoplus_{i=1}^{n}T_i$ be the decomposition of $T$ as a direct sum of indecomposable objects. For any object $X$ of $\cc_Q$ there is a triangle 
\begin{equation}
\label{index triangles}
T_1^{X}\to T_0^{X}\to X \to \Sigma T^X_1
\end{equation}
where $T^X_0=\bigoplus_{i=1}^{n}T^{\oplus a_i}_i$ and $T^X_1=\bigoplus_{i=1}^{n}T^{\oplus b_i}_i$. With this notation, the \emph{index of} $X$ \emph{with respect to} $T$ is the integer vector 
\[
\text{ind}_T(X)=(a_1-b_1, \dots, a_n-b_n).
 \]
\end{definition}
\begin{remark} 
Even though the triangle in (\ref{index triangles}) is not unique, it can be proved that the index is well defined. Moreover, it was proved in \cite{Keller Reiten} that in any such triangle the morphism $T_0^{X}\to X$ is a left $\ct$-approximation. When this morphism is a minimal left $\ct$-approximation we will call the triangle minimal.
\end{remark}

\begin{notation}
$(i)$ Let $z=z_1\oplus \dots \oplus z_s$  be the decomposition of an object $z\in \cd_Q$ as a direct sum of indecomposable objects (see \thref{identification}). The projective $\cR_C$-module $z_1^{\wedge}\oplus \dots \oplus z_n^{\wedge}$ will be denoted by $z^{\wedge}$.  We proceed analogously for injective modules.

$(ii)$ The Yoneda functor induces an equivalence between $\cR_C$ and $\ind(\ce)$. So any $\cR_C$-module can be extended to an $\ce$-module. We denote the extension of $x^{\wedge}$ by $x^{\wedge}_{\ce}$. The restriction of $x^{\wedge}_{\ce} $ to $\ctt$ will be denoted by $x^{\wedge}_{\ctt}$. We proceed analogously for injective modules. Moreover, the $\cR_C$-modules of the form $P_{x,\ct}$ and $I_{x,\ct}$ can be first extended to $\ce$ and then restricted to $\ctt$. The finitely generated projective $\ctt$-modules obtained in this way will be denoted by $P_{x, \ctt}$ and $I_{x,\ctt}$, respectively.
\end{notation}

\begin{lemma}
Let $x\in \Z Q_0$ be a non-frozen vertex considered as an indecomposable object of $\cd_Q$ (see \thref{identification}). Let $ T_1^{x}\to T_0^{x}\to x \to \Sigma T^x_1$ be a triangle in $\cd_Q$ such that $T_0^{x}$ and $ T_1^x$ are objects of $\ct$, and $T_0^x\to x$ is a minimal left $\ct$-approximation. Then there is a minimal projective resolution of $\cR_C$-modules
\begin{equation}
\label{resolution of x D/T}
0\to {T_1^x}^{\wedge} \to P_{x,\ct}\oplus {T_0^x}^{\wedge} \to x^{\wedge}\to x_{\cd/\ct}^{\wedge}\to 0
\end{equation}
and a minimal injective coresolution of $\cR_C$-modules 
\begin{equation}
0\to x_{\cd/\ct}^{\vee} \to x^{\vee} \to I_{x,\ct}\oplus {\Sigma T_0^x}^{\vee} \to  {\Sigma T_1^x}^{\vee} \to 0.
\end{equation}
\end{lemma}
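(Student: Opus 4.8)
The plan is to realize the four--term sequence as the minimal projective resolution of $x^\wedge_{\cd/\ct}$ over $\cR_C$ obtained by lifting the index triangle, and to verify exactness by evaluating at each vertex of $\Z\widetilde{Q}_C$. The starting point is the identification $\cR_C(w,x)\cong\ce(\res(w^\wedge),\res(x^\wedge))$ coming from the Yoneda embedding together with the equivalence $\res\colon\proj(\cR_C)\iso\ce$ of \thref{description of gpr}. Combining it with the stable equivalence $\phi\colon\ul{\ce}\iso\cd_Q$ yields, for every non-frozen $w$, a natural surjection $\cR_C(w,x)\twoheadrightarrow\cd_Q(w,x)$ whose kernel is exactly the morphisms factoring through a projective--injective object of $\ce$, i.e. through a frozen vertex. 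Post-composing with $\cd_Q(w,x)\twoheadrightarrow\cd_Q/\ct(w,x)$ defines the natural transformation $\pi\colon x^\wedge\to x^\wedge_{\cd/\ct}$; since $\End_{\cd_Q}(x)=k$ and (for $x\notin\ct$) the identity survives in the quotient, the module $x^\wedge_{\cd/\ct}$ has simple top supported at $x$, so $\pi$ is a projective cover and in particular an epimorphism.

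The second step is to identify the first syzygy $\ker\pi$, whose value at $w$ consists of those morphisms $w\to x$ that vanish in $\cd_Q/\ct$, namely the morphisms whose image in $\cd_Q$ factors through $\ct$, together with the morphisms factoring through a projective--injective object. I would produce these two families separately: the factorizations through $\ct$ are generated by the approximation $g\colon T_0^x\to x$ of the index triangle (lifted to $\cR_C$ using the surjectivity of the first step), giving the component ${T_0^x}^\wedge\to x^\wedge$, while the factorizations through projective--injectives are packaged, vertex by vertex, by the summands of $P_{x,\ct}$ indexed by the configuration $C$, the multiplicities being read off from the spaces $\cd_Q/\ct(\,\cdot\,,x)$. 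The map $\alpha\colon P_{x,\ct}\oplus{T_0^x}^\wedge\to x^\wedge$ is the sum of these components, and one checks $\im\alpha=\ker\pi$ from the approximation property of $g$ (every factorization through $\ct$ passes through $T_0^x$) together with the description of the kernel of $\cR_C(w,x)\to\cd_Q(w,x)$ obtained in the first step.

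The third step computes $\ker\alpha$ by applying $\cd_Q(w,-)$ to the index triangle $T_1^x\to T_0^x\xrightarrow{g}x\to\Sigma T_1^x$: the resulting long exact sequence, reconciled with the frozen contributions coming from $P_{x,\ct}$, identifies the relations among the generators of $\ker\pi$ with the image of $\cd_Q(w,T_1^x)$, producing the inclusion ${T_1^x}^\wedge\hookrightarrow P_{x,\ct}\oplus{T_0^x}^\wedge$ and exhibiting it as $\ker\alpha$. Minimality follows because $g$ and the map $T_1^x\to T_0^x$ are minimal, so all differentials land in the radical. Finally, the injective coresolution is obtained by the formally dual argument, working with $\cR_C^{op}$-modules and the cofree modules $x^\vee$, with $\cd_Q(w,x)$ replaced by $\cd_Q(x,w)$; the shift appearing in $\Sigma T_0^x$ and $\Sigma T_1^x$ records the Serre functor $\Sigma^2$ of the $2$-Calabi--Yau category $\cd_Q$, which interchanges the minimal right and left $\ct$-approximations of $x$.

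I expect the main obstacle to be exactness in the middle, that is proving $\im\alpha=\ker\pi$ with the two families of factorizations neither overlapping nor leaving a gap: one must control simultaneously the morphisms factoring through $\ct$ and those factoring through projective--injectives, using the precise description of $\cR_C(w,x)$ relative to $\cd_Q(w,x)$ and the approximation property of $g$. The bookkeeping of the multiplicities in $P_{x,\ct}$ at frozen versus non-frozen vertices, and the verification that the proposed differentials are genuinely minimal, are the delicate points; by contrast, the outer exactness (surjectivity of $\pi$ and injectivity of ${T_1^x}^\wedge\to P_{x,\ct}\oplus{T_0^x}^\wedge$) should be comparatively routine.
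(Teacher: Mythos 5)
Your outer steps (that $\pi\colon x^\wedge\to x^\wedge_{\cd/\ct}$ is a projective cover, and that $\ker\pi(w)$ consists of the morphisms $w\to x$ whose class in $\cd_Q$ factors through $\ct$ together with those factoring through projective--injectives) are correct, but the step you yourself flag as ``the main obstacle'' is exactly the content of the lemma, and the route you propose for it does not work. First, the ``two families separately'' framing is structurally wrong: the submodule of morphisms factoring through projective--injectives is the image of $P_0^\wedge\to x^\wedge$, where $P_0\to x$ is the projective cover of $x$ in $\ce=\gpr(\cs_C)$; this $P_0$ depends only on $x$ and not on $\ct$, so it cannot be identified with $P_{x,\ct}$, and in a minimal presentation a morphism through a projective--injective in general also needs a component through ${T_0^x}^\wedge$. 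Second, the verification you sketch is circular: writing such a morphism via the approximation property of $g$ and chosen lifts of a basis of $\cd_Q/\ct(y,x)$ leaves an error term that again factors through a projective--injective, and your outline provides no filtration or induction to terminate this loop. The paper avoids both problems by arguing in the opposite direction: it lifts the triangle to a conflation $0\to T_1^x\to P\oplus T_0^x\to x\to 0$ in $\ce$ whose deflation is a \emph{minimal} $\ctt$-approximation (where $\ctt\subset\ce$ is the preimage of $\ct$, which contains all projective--injectives); exactness of the four-term sequence is then automatic from left exactness of $(?)^\wedge$ applied to a kernel--cokernel pair, the cokernel is $x^\wedge_{\cd/\ct}$ because $\ce/\ctt\cong\cd_Q/\ct$, and the projective part is identified \emph{a posteriori} by a multiplicity computation: the multiplicity of $\sigma(y)^\wedge$ in $P$ equals $\dim\Ext^1_{\cR_C}(x^\wedge_{\cd/\ct},S_{\sigma(y)})=\dim\cd_Q/\ct(y,x)$, computed from the two-term injective coresolution $0\to S_{\sigma(y)}\to\sigma(y)^\vee\to y^\vee\to 0$ of the frozen simple. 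This $\Ext$-computation (or some equivalent of it) is the heart of the proof and is entirely missing from your proposal; without it, $\im\alpha=\ker\pi$ is asserted, not proved.

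Two further points. Your computation of $\ker\alpha$ by applying $\cd_Q(w,-)$ to the triangle only controls stable morphisms, whereas $\ker\alpha$ consists of $\cR_C$-morphisms; ``reconciling the frozen contributions'' is again the same unresolved difficulty, while in the paper's setup injectivity of ${T_1^x}^\wedge\to P^\wedge\oplus{T_0^x}^\wedge$ and exactness there are free, since $T_1^x\to P\oplus T_0^x\to x$ is a conflation in $\ce\subset\Mod(\cs_C)$. Finally, your justification of the dual half is incorrect: $\cd_Q$ is not $2$-Calabi--Yau and its Serre functor is $\tau\Sigma$, not $\Sigma^2$ (it is the orbit category $\cc_Q$ that is $2$-CY), so there is no such Serre functor interchanging the left and right $\ct$-approximations of $x$; the coresolution is obtained, dually to the first half, by applying the right exact functor $(?)^\vee$ to a conflation in $\ce$ lifting the appropriately rotated triangle.
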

\begin{proof}
We can lift the triangle $ T_1^{x}\to T_0^{x}\to x \to \Sigma T^x_1$ to a conflation in $0\to T_1^{x} \to P\oplus T_0^{x}\to x \to 0 $ in $\ce$, where $P$ is a projective object and $ P\oplus T_0^{x}\to x$ is a minimal left $\ctt$-approximation. Applying the left exact functor $(?)^{\wedge}:\cR_C \to \mod(\cR_C)$ to this conflation we obtain the exact sequence
\begin{equation*}
0\to {T_1^{x}}^{\wedge} \to P^{\wedge}\oplus {T_0^{x}}^{\wedge} \to x^{\wedge}\to M\to 0.
\end{equation*}
By definition, $M(y)$ corresponds to all morphisms $y\to x$ that cannot be factorized through the minimal left $\widetilde{\ct}$-approximation $ P\oplus T_0^{x} \to x$. That is, all morphism from $y\to x$ in $\ce/\ctt\cong \cd_Q/\ct$. This shows that $M$ is isomorphic to $x^{\wedge}_{\cd/\ct}$. Therefore, we have obtained a projective resolution of $x^{\wedge}_{\cd/\ct}$ which is minimal because $T^x_{0}\to x$ is a minimal left $\ct$-approximation. It only remains to describe $P^{\wedge}$ explicitly. We know that $P^{\wedge}$ is a direct sum of indecomposable projective $\cR_C$-modules. To calculate the multiplicity of $\sigma(y)^{\wedge}$ in $P^{\wedge}$ it is enough to calculate the dimension of $\Ext_{\cR_C}^1(x_{\cd/\ct}^{\wedge}, S_{\sigma(y)})$, where $S_{\sigma(y)}$ is the simple $\cR_C$-module supported in $\sigma(y)$.  We do this calculating the value of the derived functor 
\[
\text{R}\Hom(x_{\cd /\ct }^{\wedge}, S_{\sigma(y)})= \text{R}\Hom(x_{\cd /\ct }^{\wedge}, \sigma(y)^{\vee}\to y^{\vee})= (0\to D\cd/\ct(y,x)).
\]
We conclude that the multiplicity of $\sigma(y)^{\wedge}$ in $P$ is equal to the dimension of $\cd/\ct(y,x)$. Which implies that $P^{\wedge}\cong P_{x,\ct}$. To obtain the injective coresolution we proceed in an analogous way, we apply the right exact functor $( ? )^{\vee}=D(\cR_C(?, \ )): \cR_C \to \mod (\cR_C) $ to a short exact sequence $0\to x \to I \oplus \Sigma T^{x}_0 \to \Sigma T^{x}_1\to 0$ lifting the triangle $T_0^x\to x \to \Sigma T_1^x \to \Sigma T_0^x$. We obtain an exact sequence
\[
0\to x^{\vee}_{\cd / \ct} \to  x^{\vee}\to I^{\vee} \oplus {\Sigma T^{x}_0}^{\vee} \to {\Sigma T^{x}_1}^{\vee}\to 0.
\]
As before we can deduce that $I$ is isomorphic to $I_{x,\ct}$.

\end{proof}

\begin{corollary}
\thlabel{resolutions upstairs 1}
Let $x$ be a vertex in $C$ and $ T_1^{x}\to T_0^{x}\to x \to \Sigma T^x_1$ be a triangle in $\cd_Q$ such that $T_0^{x}$ and $ T_1^x$ are objects of $\ct$, and $T_0^x\to x$ is a minimal  $\ct$-approximation. Then the minimal projective resolution of the simple $\ctt$-module $S_{\sigma^{-1}(x)^{\wedge}}$ supported on $\sigma^{-1}(x)^{\wedge}$ is given by

\[
0 \to  {T_1^{x}}^{\wedge}_{\ctt} \to  P_{x, \ctt}^{\wedge}\oplus {T_0^{x}}_{\ctt}^{\wedge}\to {\sigma^{-1}(x)_{\ctt}^{\wedge}} \to S_{\sigma^{-1}(x)^{\wedge}} \to 0,
\]
and its minimal injective coresolution is given by

\begin{equation}
0\to S_{\sigma^{-1}(x)^{\wedge}}  \to x^{\vee}_{\ctt} \to I_{x,\ctt}\oplus {\Sigma T_0^x}^{\vee}_{\ctt} \to  {\Sigma T_1^x}^{\vee}_{\ctt} \to 0.
\end{equation}
\end{corollary}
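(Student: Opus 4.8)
The plan is to obtain the projective resolution by pushing the resolution of the $\cR_C$-module $x^{\wedge}_{\cd/\ct}$ from the preceding lemma into the module category of $\ctt$, and then prepending one term. Recall from the Notation that sending an $\cR_C$-module to its extension over $\ce$ (along the Yoneda equivalence $\cR_C\cong\ind(\ce)$) and restricting to the full subcategory $\ctt$ is an \emph{exact} functor $\Mod(\cR_C)\to\Mod(\ctt)$; since $x\in C\subset\Z Q_0$ is a non-frozen vertex, the preceding lemma applies to it, and I would apply this functor to the four-term exact sequence \eqref{resolution of x D/T}.

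The first point is that the cokernel term $x^{\wedge}_{\cd/\ct}$ vanishes on every object of $\ctt$. Indeed, the indecomposable objects of $\ctt$ are either non-projective summands of $T$, which map to objects of $\ct\subset\cd_Q$ and hence satisfy $\cd_Q/\ct(T',x)=0$, or the projective--injective objects $\sigma^{-1}(c)^{\wedge}$; on the latter $x^{\wedge}_{\cd/\ct}$ vanishes because every morphism $\sigma^{-1}(c)^{\wedge}\to x^{\wedge}$ lifts along the deflation $P\oplus T_0^x\to x^{\wedge}$ (projectives lift through deflations), so it already lies in the image and dies in the cokernel. Therefore the restriction of \eqref{resolution of x D/T} collapses to a short exact sequence of $\ctt$-modules
\[
0\to {T_1^x}^{\wedge}_{\ctt}\to P_{x,\ctt}\oplus {T_0^x}^{\wedge}_{\ctt}\to x^{\wedge}_{\ctt}\to 0 ,
\]
whose three left-hand terms are projective.

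The crux is then to recognise $x^{\wedge}_{\ctt}$ as the first syzygy of $S_{\sigma^{-1}(x)^{\wedge}}$. By \thref{description of gpr}$(i)$ the Auslander--Reiten quiver of $\ce$ is $\Z\widetilde{Q}_C$, in which the frozen vertex $\sigma^{-1}(x)$ has a unique incoming arrow, coming from $x$. Hence the corresponding irreducible map $a\colon x^{\wedge}\to\sigma^{-1}(x)^{\wedge}$ is at once the radical inclusion of the indecomposable projective $\sigma^{-1}(x)^{\wedge}$ (so $a$ is right almost split) and the injective envelope of $x^{\wedge}$ (so $a$ is an inflation, in particular a monomorphism). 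Post-composition with $a$ then gives an isomorphism $x^{\wedge}_{\ctt}\iso\rad(\sigma^{-1}(x)^{\wedge}_{\ctt})=\ker\big(\sigma^{-1}(x)^{\wedge}_{\ctt}\twoheadrightarrow S_{\sigma^{-1}(x)^{\wedge}}\big)$: it is injective because $a$ is a monomorphism, and it is onto the radical because, $a$ being right almost split, every non-isomorphism from an object of $\ctt$ into $\sigma^{-1}(x)^{\wedge}$ factors through it. Splicing the displayed sequence with
\[
0\to x^{\wedge}_{\ctt}\iso\rad\big(\sigma^{-1}(x)^{\wedge}_{\ctt}\big)\to \sigma^{-1}(x)^{\wedge}_{\ctt}\to S_{\sigma^{-1}(x)^{\wedge}}\to 0
\]
yields the asserted four-term resolution. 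Minimality is automatic: $\sigma^{-1}(x)^{\wedge}_{\ctt}\to S_{\sigma^{-1}(x)^{\wedge}}$ is a projective cover, and the remaining differentials are those of the (minimal) resolution of the preceding lemma, whose entries lie in the radical and stay there after restriction.

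The minimal injective coresolution I would prove by the dual argument, working with the co-free modules $x^{\vee}_{\ctt}$, $I_{x,\ctt}$, ${\Sigma T_i^x}^{\vee}_{\ctt}$ in place of the free ones and replacing ``projective cover / radical / right almost split'' by ``injective envelope / cosocle / left almost split''; the relevant input is again the single arrow at the frozen vertex $\sigma^{-1}(x)$, now used to identify $x^{\vee}_{\ctt}$ as the injective envelope of $S_{\sigma^{-1}(x)^{\wedge}}$. The step I expect to require the most care, and the real content of the corollary, is precisely this identification of $x^{\wedge}_{\ctt}$ with $\rad(\sigma^{-1}(x)^{\wedge}_{\ctt})$ together with its injective dual: it is what turns the $\ct$-dependent resolution of $x^{\wedge}_{\cd/\ct}$ ``downstairs'' over $\cR_C$ into a resolution of the simple supported at the frozen vertex ``upstairs'' over $\ctt$, and it hinges on reading off the almost-split behaviour at $\sigma^{-1}(x)$ from the combinatorics of $\Z\widetilde{Q}_C$.
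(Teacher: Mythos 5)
Your proposal is correct and takes essentially the same route as the paper: the paper likewise splices the restriction to $\ctt$ of the four-term resolution \eqref{resolution of x D/T} (whose cokernel term $x^{\wedge}_{\cd/\ct}$ vanishes on $\ctt$) with the short exact sequence $0 \to x^{\wedge} \to \sigma^{-1}(x)^{\wedge} \to S_{\sigma^{-1}(x)^{\wedge}} \to 0$ arising from the unique arrow $x\to\sigma^{-1}(x)$ in the AR quiver, and obtains the coresolution dually. The only difference is bookkeeping: the paper writes that last sequence as a sequence of $\ce$-modules and applies the exact restriction functor $\Mod(\ce)\to\Mod(\ctt)$, whereas you establish the identification $x^{\wedge}_{\ctt}\cong\rad\bigl(\sigma^{-1}(x)^{\wedge}_{\ctt}\bigr)$ directly over $\ctt$, supplying the almost-split and monomorphism arguments that the paper leaves implicit.
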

\begin{proof}
From the description of the AR-quiver of $\ce$, we know that the simple $\ce$-module $S_{\sigma^{-1}(x)^{\wedge}}$ supported on the indecomposable object $\sigma^{-1}(x)^{\wedge}$ admits a projective resolution 
\[
0  \to x^{\wedge}_{\ce} \to \sigma^{-1}(x)^{\wedge}_{\ce} \to S_{\sigma^{-1}(x)^{\wedge}} \to 0
\]
Applying the restriction functor res$:\Mod(\ce)\to \Mod(\ctt)$ to this sequence we obtain the exact sequence  of $\widetilde{\ct}$-modules
\[
 0\to x^{\wedge}_{\ctt} \to \sigma(x)^{\wedge}_{\ctt} \to S_{{\sigma^{-1}(x)}^{\wedge}} \to 0.
\]
Note that the resolution of $\cR_C$ (\ref{resolution of x D/T}) induces an exact sequence of $\widetilde{\ct}$-modules
\[
0\to {T_1^x}_{\ctt}^{\wedge} \to P_{x,\ctt}\oplus {T_0^x}_{\ctt}^{\wedge}\to x^{\wedge}_{\ctt}\to 0.
\]
We can splice these last two sequences to obtain the minimal projective resolution. The minimal injective coresolution is obtained analogously.
\end{proof}

To obtain a (co)resolution of the $\ctt$-modules associated to non-frozen vertices we need to recall the notion of mutation of cluster-tilting objects in $\cc_Q$ (\cf \cite{Iyama Yoshino}). Let $T$ be a basic cluster-tilting object of $\cc_Q$ and $T_i$ one of its indecomposable summands. Then the exchange triangles associated to $T$ and $T_i$ are triangles of the form

\begin{center}
\begin{tabular}{lllll}
$\phantom{l}T_i^{\ast} \overset{\ul{g}}\longrightarrow B \overset{\ul{h}}\longrightarrow T_i \to \Sigma T_i^*$
&
&
and
&
&
$ T_i \overset{\ul{s}}\longrightarrow B' \overset{\ul{t}}\longrightarrow T^{\ast}_i \to \Sigma T_i$,
\end{tabular}
\end{center}
where $\ul{h}$ and $\ul{t}$ are minimal right $\add (\ctt \setminus  \lbrace T_i\rbrace)$-approximations
and $\ul{g}$ and $\ul{s}$ are minimal left $\add (\ctt \setminus  \lbrace T_i \rbrace)$-approximations. Changing the objects appearing in these triangles by isomorphic objects if necessary, we can assume that these sequences are induced by triangles in $\cd_Q$ of the form
\begin{align}
\phantom{xxxxxl} T_i^{\ast} \longrightarrow B \longrightarrow T_i \to \Sigma T_i^*
&
&
\phantom{l}\text{and}
&
&
 F^{-1}(T_i) \longrightarrow B' \longrightarrow T^{\ast}_i \to \Sigma F^{-1}(T_i),
\end{align}
respectively. We lift these triangles to conflations in $\ce$ of the form
\begin{center}
\begin{tabular}{lllll}
$\phantom{xxxll}  0 \to T_i^{\ast} \to E \to T_i \to 0$
&
&
and
&
&
$0\to F_*^{-1}T_i \to E' \to T^{\ast}_i\to 0$,
\end{tabular}
\end{center}
respectively.
\begin{lemma}
\thlabel{resolutions upstairs 2}
Let $x$ be a vertex of $\Z Q$ considered as an indecomposable object of $\cd_Q$ such that $p(x)=T_i$. With the notation above, the minimal projective resolution of the simple $\widetilde{\ct}$-module $S_{x^{\wedge}}$ supported on $x^{\wedge}$ is given by
\[
0 \to F^{-1}(x)^{\wedge}_{\ctt} \to E'^{\wedge}_{\ctt} \to E^{\wedge}_{\ctt} \to x^{\wedge}_{\ctt} \to S_{x} \to 0,
\]
the minimal injective coresolution of the simple $\widetilde{\ct}$-module $S_{x^{\wedge}}$ is given by
\[
0 \to S_x \to x^{\vee}_{\ctt}\to E'^{\vee}_{\ctt} \to E^{\vee}_{\ctt} \to F(x)^{\vee}_{\ctt} \to 0 
\]
\end{lemma}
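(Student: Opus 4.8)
The plan is to read off both (co)resolutions from the two exchange conflations
\[
0\to x^{*}\xrightarrow{g} E\xrightarrow{h} x\to 0,\qquad
0\to F_{*}^{-1}(x)\xrightarrow{s} E'\xrightarrow{t} x^{*}\to 0
\]
constructed just above (I write $x$ and $x^{*}$ for the objects of $\ce$ lifting $T_i$ and $T_i^{*}$), by applying the Yoneda functor $(?)^{\wedge}=\ce(?,-)$ and its injective counterpart $(?)^{\vee}=D\ce(-,?)$ and then restricting to $\ctt$. The point to keep in mind throughout is that all four objects $F^{-1}(x)$, $E'$, $E$ and $x$ lie in $\ctt$: by construction $E,E'\in\add(\ctt\setminus\{x\})$, while $F^{-1}(x)=F_{*}^{-1}(x)$ belongs to $\ctt$ because $\ctt$ is $F_{*}$-stable (its stable image $\ct$ is the $F$-stable subcategory $\add(F^{j}(T)\mid j\in\Z)$, cf.\ \thref{ct subcategories} and \thref{cts}). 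Consequently each of $F^{-1}(x)^{\wedge}_{\ctt}$, $E'^{\wedge}_{\ctt}$, $E^{\wedge}_{\ctt}$, $x^{\wedge}_{\ctt}$ is a \emph{projective} $\ctt$-module, so it suffices to exhibit an exact complex of the asserted shape and to verify its minimality.

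First I would apply the left exact functor $(?)^{\wedge}_{\ctt}$ to the first conflation, obtaining exactness of $0\to (x^{*})^{\wedge}_{\ctt}\xrightarrow{g_{*}} E^{\wedge}_{\ctt}\xrightarrow{h_{*}} x^{\wedge}_{\ctt}$. Since $E\to x$ is a minimal right $\add(\ctt\setminus\{x\})$-approximation, every radical morphism of $\ctt$ into $x$ factors through $h$, so $\im h_{*}=\rad(?,x)$ and $\cok h_{*}=x^{\wedge}_{\ctt}/\rad(?,x)=S_{x^{\wedge}}$; here one uses that $\ce$ has no loops, which guarantees that the radical endomorphisms of $x$ also factor through $E$. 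This yields the four-term exact sequence $0\to (x^{*})^{\wedge}_{\ctt}\xrightarrow{g_{*}} E^{\wedge}_{\ctt}\xrightarrow{h_{*}} x^{\wedge}_{\ctt}\to S_{x^{\wedge}}\to 0$, and in particular identifies $\ker h_{*}$ with $(x^{*})^{\wedge}_{\ctt}$, which is not projective since $x^{*}\notin\ctt$.

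The decisive step is then to resolve $(x^{*})^{\wedge}_{\ctt}$ by means of the second conflation. Applying $(?)^{\wedge}_{\ctt}$ gives the left exact sequence $0\to F^{-1}(x)^{\wedge}_{\ctt}\xrightarrow{s_{*}} E'^{\wedge}_{\ctt}\xrightarrow{t_{*}} (x^{*})^{\wedge}_{\ctt}$, and the crucial claim is that $t_{*}$ is \emph{surjective}. This is precisely where the rigidity of the cluster-tilting subcategory enters, and is the main obstacle of the argument: for every $Z\in\ctt$ the long exact sequence attached to $\ce(Z,-)$ reads
\[
\ce(Z,E')\xrightarrow{t_{*}}\ce(Z,x^{*})\to \Ext^{1}_{\ce}(Z,F^{-1}(x)),
\]
and the last term vanishes because $F^{-1}(x)\in\ctt$ and $\Ext^{1}_{\ce}(Z,F^{-1}(x))\cong\Ext^{1}_{\ul{\ce}}(Z,F^{-1}(x))=0$ by rigidity of $\ctt$. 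Hence $t_{*}$ is onto and we get the short exact sequence $0\to F^{-1}(x)^{\wedge}_{\ctt}\to E'^{\wedge}_{\ctt}\to (x^{*})^{\wedge}_{\ctt}\to 0$. Splicing it into the four-term sequence along $(x^{*})^{\wedge}_{\ctt}$ produces the desired complex
\[
0\to F^{-1}(x)^{\wedge}_{\ctt}\xrightarrow{s_{*}} E'^{\wedge}_{\ctt}\xrightarrow{g_{*}t_{*}} E^{\wedge}_{\ctt}\xrightarrow{h_{*}} x^{\wedge}_{\ctt}\to S_{x^{\wedge}}\to 0 .
\]
Minimality is then immediate: the differentials $h_{*}$, $g_{*}t_{*}$ and $s_{*}$ are induced by the radical morphisms $h$, $g t$ and $s$ coming from the \emph{minimal} approximations in the exchange triangles, so each has image in the radical of its target, as required for the quiver computation via \thref{semiperfect rings}.

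Finally, the minimal injective coresolution is obtained by the entirely dual computation: applying the right exact functor $(?)^{\vee}_{\ctt}=D\ce(-,?)|_{\ctt}$ to the same two exchange conflations — equivalently, applying the $k$-duality $D$ to the minimal projective resolution of the corresponding simple module over the opposite cluster-tilting subcategory $\ctt\op\subset\ce\op$ — yields
\[
0\to S_{x^{\wedge}}\to x^{\vee}_{\ctt}\to E'^{\vee}_{\ctt}\to E^{\vee}_{\ctt}\to F(x)^{\vee}_{\ctt}\to 0 .
\]
The object $F(x)$ appears in place of $F^{-1}(x)$ because passing to $\ce\op$ interchanges the two lifts of the exchange triangle and replaces $F_{*}$ by $F_{*}^{-1}$, reflecting the $2$-Calabi--Yau symmetry $\tau\cong\Sigma$ of $\cc_{Q}$; the rigidity input needed for the corresponding surjectivity is now $\Ext^{1}_{\ce}(F(x),Z)=0$ for $Z\in\ctt$, which again holds since $F(x)\in\ctt$.
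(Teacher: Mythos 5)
Your proof is correct and follows essentially the same route as the paper's (much terser) argument: the paper likewise applies $(?)^{\wedge}_{\ctt}$ to the two exchange conflations, observes that they remain exact, splices the resulting sequences, and treats the coresolution dually. The details you supply --- rigidity of $\ctt$ giving the $\Ext^1$-vanishing needed for exactness, the minimal-approximation argument identifying the cokernel with $S_{x^{\wedge}}$, and minimality via radical differentials --- are exactly what the paper leaves implicit.
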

\begin{proof}

Notice that the exchange conflations remain exact when applying the left exact functor $( \ )^{\wedge}: \widetilde{\ct} \to \mod \widetilde{\ct}$. We can splice the two resulting sequences to obtain the minimal projective resolution. The coresolution is obtained analogously.
\end{proof}
We have built the ground to describe the projective resolutions of the simple $\taf$-modules. By slight abuse of notation, we denote by $p$ the natural projection $\ce \to \ce \corb F_*$ (we used $p$ to denote the natural projection $\cd_Q\to \cc_Q$, but this last projection is induced by $\ce \to \ce \corb F_*$). Restricting $p$ to $\ctt$, we obtain a pair of adjoint functors
\[
\xymatrix{
\Mod (\ctt) \ar_{\pi}@<-0.5ex>[d]  \\
\Mod (\taf) \ar_{p^{\ast}}@<-0.5ex>[u] 
}
\]
where $p^{\ast}$ is the restriction functor and $\pi$ its left adjoint. At the level of objects $p_{\ast}$ is defined as $\pi(M)(p(x))=\prod_{i \in \Z}M(F^{i}(x))$. This makes it clear that $\pi$ preserves the simple modules. The following lemma was proved in \cite{Najera Frobenius orbit} using the fact that $\pi$ preserves projectives.

\begin{lemma}$($\cite[Lema 34]{Najera Frobenius orbit}$)$
\thlabel{embedding lemma}
\thlabel{projective resolution}
Let $M$ be a finitely presented $\ctt$-module. 
\begin{itemize}
\item[$(i)$] Let $L$ be an $\ctt$-module admitting a resolution $\cdots \to P_1 \to P_0 \to L \to  0$ by finitely generated projective $\ctt$-modules $P_i$. Then the complex
\begin{equation*}
\xymatrix{
\cdots\ar[r] & \pi (P_1) \ar[r] & \pi (P_0) \ar[r] &\pi (L) \ar[r] & 0
}
\end{equation*} 
is a resolution of $\pi (L)$ by finitely generated projective $\ctt\corb F_*$-modules;
\item[$(ii)$] for each $\ctt$-module $L$ admitting a resolution by finitely generated projective $\ctt$-modules, there are canonical isomorphisms
\begin{equation*}
\Ext^i_{\ctt \corb F_*}(\pi (L),\pi (M)) \cong  \prod_{l\in \Z} \Ext^i_{\ctt}( L, {F^l_*}_{\ast}(M)).
\end{equation*}
\end{itemize}
\end{lemma}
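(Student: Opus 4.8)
The plan is to exploit the two properties of the induction functor $\pi\colon\Mod(\ctt)\to\Mod(\ctt\corb F_*)$ recorded just before the statement: that $\pi$ is left adjoint to the restriction functor $p^*$, and that on objects it is given by the product formula $\pi(M)(p(x))=\prod_{i\in\Z}M(F_*^i(x))$. For part $(i)$ the first step is to check that $\pi$ carries finitely generated projective $\ctt$-modules to finitely generated projective $\ctt\corb F_*$-modules. Since $\pi$ is additive it suffices to treat a free module $x^{\wedge}_{\ctt}=\ctt(?,x)$; evaluating the product formula at an arbitrary object $p(x')$ and using that $F_*$ is an autoequivalence to reindex (set $l=-i$), one finds
\[
\pi(x^{\wedge}_{\ctt})(p(x'))=\prod_{i\in\Z}\ctt(F_*^i(x'),x)\cong\prod_{l\in\Z}\ctt(x',F_*^l(x))=\ctt\corb F_*(p(x'),p(x)),
\]
so that $\pi(x^{\wedge}_{\ctt})=p(x)^{\wedge}$ is again free. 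Finite generation is preserved because a finitely generated projective is a direct summand of a finite sum of free modules, which $\pi$ sends to a direct summand of a finite sum of free modules.

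The second step of part $(i)$ is exactness of $\cdots\to\pi(P_1)\to\pi(P_0)\to\pi(L)\to 0$, which I would verify objectwise. Fixing an object $p(x)$ and using naturality of the product formula, the evaluated complex becomes the product over $i\in\Z$ of the complexes $\cdots\to P_1(F_*^i(x))\to P_0(F_*^i(x))\to L(F_*^i(x))\to 0$. Each of these is the value at $F_*^i(x)$ of the given resolution $P_\bullet\to L$, hence exact, since exactness of a complex of $\ctt$-modules is tested objectwise. As arbitrary products are exact in $\Mod(k)$, the product complex is exact; and $p(x)$ being arbitrary, the complex of $\ctt\corb F_*$-modules is exact. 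Together with the first step this proves $(i)$.

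For part $(ii)$ I would compute $\Ext$ from the resolution $\pi(P_\bullet)\to\pi(L)$ just produced. Because it is a projective resolution,
\[
\Ext^i_{\ctt\corb F_*}(\pi(L),\pi(M))=H^i\!\left(\Hom_{\ctt\corb F_*}(\pi(P_\bullet),\pi(M))\right),
\]
and the adjunction $\pi\dashv p^*$ rewrites this, term by term and naturally, as $H^i(\Hom_{\ctt}(P_\bullet,p^*\pi(M)))$. The key identification is
\[
p^*\pi(M)\cong\prod_{l\in\Z}{F_*^l}_*(M),
\]
which is again the product formula after reindexing: $(p^*\pi(M))(x)=\pi(M)(p(x))=\prod_{i}M(F_*^i(x))=\prod_{l}({F_*^l}_*(M))(x)$. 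Since $\Hom_{\ctt}(-,\prod_l{F_*^l}_*(M))=\prod_l\Hom_{\ctt}(-,{F_*^l}_*(M))$, and products are exact over $k$ and hence commute with cohomology, I obtain
\[
\Ext^i_{\ctt\corb F_*}(\pi(L),\pi(M))\cong\prod_{l\in\Z}H^i\!\left(\Hom_{\ctt}(P_\bullet,{F_*^l}_*(M))\right)=\prod_{l\in\Z}\Ext^i_{\ctt}(L,{F_*^l}_*(M)),
\]
which is the asserted isomorphism, the last equality holding because $P_\bullet$ is a projective resolution of $L$.

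The formal ingredients—the adjunction, the exactness of products in $\Mod(k)$, and the fact that $\Hom$ into a product is a product of $\Hom$s—are routine. I expect the main obstacle to be the bookkeeping attached to the product formula: verifying that $\pi$ genuinely preserves finite generation of projectives (not merely projectivity), and that $\pi$ sends a differential to the componentwise product of differentials, so that the argument by exactness of products applies verbatim. The reindexing underlying $p^*\pi(M)\cong\prod_l{F_*^l}_*(M)$ likewise demands care with the direction conventions for $F_*$, since a mismatch there would invalidate the naturality used throughout part $(ii)$.
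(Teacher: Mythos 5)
Your proof is correct and takes essentially the approach the paper relies on: the paper itself gives no proof of this lemma, citing instead Lemma 34 of \cite{Najera Frobenius orbit} and noting that the argument there rests on the fact that $\pi$ preserves projectives, which is exactly your first step; the objectwise product formula, the adjunction $\pi\dashv p^{*}$, and exactness of arbitrary products in $\Mod(k)$ then yield parts $(i)$ and $(ii)$ as you describe. The finiteness hypotheses enter exactly where you flag them: the product formula for $\pi$ is compatible with its being a left adjoint precisely on modules admitting resolutions by finitely generated projectives, which is what makes your two uses of $\pi$ (Kan extension and componentwise product) agree.
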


We can apply $\pi$ to the (co)resolutions of \thref{resolutions upstairs 1} and \thref{resolutions upstairs 2} to obtain the (co)resolutions of the simple $\ctt \corb F_*$-modules. For simplicity, we denote $\pi(x^{\wedge}_{\ctt})$ by $x^{\wedge}_{\ctt\corb F_*}$ and $\pi(x^{\vee}_{\ctt})$ by $x^{\vee}_{\ctt\corb F_*}$.

\begin{corollary}
\thlabel{resolution of simples 1}
Let $x$ be a vertex in $C$. Then the minimal projective resolution of the simple $\ctt\corb F_*$-module $S_{\sigma^{-1}(x))^{\wedge}}$ associated to the indecomposable object $p(\sigma^{-1}(x))^{\wedge})$ is
\begin{equation}
0 \to  {T_1^{x}}_{\ctt\corb F_{\ast}}^{\wedge} \to  {T_0^{x}}_{\ctt\corb F_{\ast}}^{\wedge}\oplus P_{x, \ctt\corb F_{\ast}} \to \sigma^{-1}(x)_{\ctt\corb F_{\ast}}^{\wedge} \to S_{p(\sigma^{-1}(x)^{\wedge})} \to 0,
\end{equation}
and its minimal injective coresolution is 
\begin{equation}
0\to S_{p(\sigma^{-1}(x)^{\wedge})}  \to x^{\vee}_{\ctt \corb F_{\ast}} \to I_{x,\ctt}\oplus {\Sigma T_0^x}^{\vee}_{\ctt \corb F_{\ast}} \to  {\Sigma T_1^x}^{\vee}_{\ctt \corb F_{\ast}} \to 0.
\end{equation}

\end{corollary}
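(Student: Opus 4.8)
The plan is to obtain both sequences by applying the functor $\pi\colon \Mod(\ctt)\to\Mod(\ctt\corb F_*)$ term by term to the minimal (co)resolutions produced in \thref{resolutions upstairs 1}, and then to verify that minimality survives this operation.

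First I would apply $\pi$ to the minimal projective resolution
\[
0 \to {T_1^{x}}^{\wedge}_{\ctt} \to P_{x,\ctt}\oplus {T_0^{x}}^{\wedge}_{\ctt}\to \sigma^{-1}(x)^{\wedge}_{\ctt} \to S_{\sigma^{-1}(x)^{\wedge}} \to 0
\]
of \thref{resolutions upstairs 1}. By part $(i)$ of \thref{projective resolution}, since every term to the left of the simple module is a finitely generated projective $\ctt$-module, the resulting complex is a resolution of $\pi(S_{\sigma^{-1}(x)^{\wedge}})$ by finitely generated projective $\ctt\corb F_*$-modules. Because $\pi$ preserves simple modules (as noted just before \thref{embedding lemma}), we have $\pi(S_{\sigma^{-1}(x)^{\wedge}})=S_{p(\sigma^{-1}(x)^{\wedge})}$, and the remaining terms are identified with $z^{\wedge}_{\ctt\corb F_*}=\pi(z^{\wedge}_{\ctt})$ and $P_{x,\ctt\corb F_*}=\pi(P_{x,\ctt})$ by the very definition of that notation. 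This already produces the four-term exact sequence in the statement; what remains is to justify its minimality.

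The crux is therefore that minimality is preserved by $\pi$. I would argue this at the level of differentials. A projective resolution is minimal exactly when each differential is a radical morphism, i.e.\ admits no isomorphism between indecomposable summands of source and target. The differentials of the original resolution are radical because that resolution is minimal. Under the identification $\Hom_{\ctt\corb F_*}(\pi z^{\wedge}_{\ctt},\pi w^{\wedge}_{\ctt})\cong\prod_{l\in\Z}\Hom_{\ctt}(z^{\wedge}_{\ctt},F^l_* w^{\wedge}_{\ctt})$ (the case $i=0$ of part $(ii)$ of \thref{projective resolution}), the image under $\pi$ of a block $z^{\wedge}_{\ctt}\to w^{\wedge}_{\ctt}$ lands in the $l=0$ component $\ctt(z,w)$. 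For indecomposable $z,w$ an isomorphism in $\ctt\corb F_*(p(z),p(w))$ can only occur in the component indexed by the $l$ with $z\cong F^l(w)$; if this forces $l=0$ then $z\cong w$ and the original block is radical, while if $z\cong F^l(w)$ only for some $l\neq 0$ the $l=0$ component still consists of non-isomorphisms. In either case $\pi$ carries radical differentials to radical differentials, so the $\pi$-image is again minimal. As a cross-check, minimality can also be read off from an Ext-count: by part $(ii)$ of \thref{projective resolution}, $\Ext^i_{\ctt\corb F_*}(S_{p(\sigma^{-1}(x))},\pi(S_y))\cong\prod_{l}\Ext^i_{\ctt}(S_{\sigma^{-1}(x)},S_{F^l(y)})$, and summing the minimal multiplicities of the original resolution over each $F$-orbit recovers exactly the multiplicities appearing in the claimed sequence.

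Finally, the injective coresolution is obtained by the dual argument, and this is where I expect the one genuine obstacle to lie. Being the left adjoint to restriction, $\pi$ preserves projectives but not, a priori, injectives, and the defining product $\pi(M)(p(x))=\prod_i M(F^i(x))$ must be reconciled with the direct-sum description of co-free modules over $\ctt\corb F_*$. I expect this to be handled by running the same argument over the opposite category and dualizing with $D$, or equivalently by establishing the evident injective-coresolution analogue of \thref{projective resolution}; throughout, the standing vanishing hypothesis $\Hom_{\cs_C}(X,F^l_*(X))=0$ for $l<0$ guarantees that all relevant products are finite in each degree, so that $\ctt\corb F_*$ and the functor $\pi$ are well defined and the $\pi$-images assemble into the asserted minimal injective coresolution of $S_{p(\sigma^{-1}(x)^{\wedge})}$.
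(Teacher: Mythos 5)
Your proposal takes essentially the same route as the paper: the paper obtains this corollary in a single step by applying $\pi$ to the minimal (co)resolutions of \thref{resolutions upstairs 1}, invoking part $(i)$ of \thref{projective resolution} and the fact that $\pi$ preserves simple modules, exactly as you do. Your extra verifications --- that radical differentials remain radical under $\pi$ (so minimality survives), and the dualization argument for the injective side --- are details the paper leaves implicit, so your write-up is the same proof, spelled out more carefully.
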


\begin{corollary}
\thlabel{resolution of simples 2}
Let $x$ be a vertex in $\Z Q$ and suppose that $x^{\wedge}$ is an indecomposable object of $\ctt$. Then the minimal projective resolution of the simple $\ctt \corb F_*$-module $S_{p(x^{\wedge})}$ is 
\begin{equation}
0 \to x^{\wedge}_{\ctt\corb F_{\ast}} \to E'^{\wedge}_{\ctt\corb F_{\ast}} \to E^{\wedge}_{\ctt\corb F_{\ast}} \to x^{\wedge}_{\ctt\corb F_{\ast}} \to S_{p(x^{\wedge})} \to 0
\end{equation}
and its minimal injective coresolution is 
\[
0 \to S_{p(x^{\wedge})} \to x^{\vee}_{\ctt \corb F_{\ast}}\to E'^{\vee}_{\ctt \corb F_{\ast}} \to E^{\vee}_{\ctt \corb F_{\ast}} \to x^{\vee}_{\ctt \corb F_{\ast}} \to 0 .
\]
\end{corollary}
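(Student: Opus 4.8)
The plan is to deduce both (co)resolutions directly from the upstairs computation in \thref{resolutions upstairs 2} by applying the left adjoint $\pi$, exactly as in the proof of \thref{resolution of simples 1}. Concretely, I would read the minimal projective resolution supplied by \thref{resolutions upstairs 2},
\[
0 \to F^{-1}(x)^{\wedge}_{\ctt} \to E'^{\wedge}_{\ctt} \to E^{\wedge}_{\ctt} \to x^{\wedge}_{\ctt} \to S_{x^{\wedge}} \to 0,
\]
as a resolution of $S_{x^{\wedge}}$ by finitely generated projective $\ctt$-modules (with $P_0 = x^{\wedge}_{\ctt}$, $P_1 = E^{\wedge}_{\ctt}$, $P_2 = E'^{\wedge}_{\ctt}$, $P_3 = F^{-1}(x)^{\wedge}_{\ctt}$) and apply $\pi$. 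By \thref{projective resolution}$(i)$, $\pi$ carries this to a resolution of $\pi(S_{x^{\wedge}})$ by finitely generated projective $\ctt\corb F_{\ast}$-modules, so the only remaining work is to identify the five terms.

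For the identification I would argue as follows. Since $\pi$ preserves simple modules (as recorded immediately before \thref{projective resolution}), $\pi(S_{x^{\wedge}}) \cong S_{p(x^{\wedge})}$. By the very definition of the notation we have $\pi(x^{\wedge}_{\ctt}) = x^{\wedge}_{\ctt\corb F_{\ast}}$, and likewise $\pi(E^{\wedge}_{\ctt}) = E^{\wedge}_{\ctt\corb F_{\ast}}$ and $\pi(E'^{\wedge}_{\ctt}) = E'^{\wedge}_{\ctt\corb F_{\ast}}$. The one point demanding attention is the left-most term: because $F^{-1}(x)$ and $x$ lie in the same $F$-orbit, we have $p(F^{-1}(x)^{\wedge}) \cong p(x^{\wedge})$ in $\ctt\corb F_{\ast}$, whence $\pi(F^{-1}(x)^{\wedge}_{\ctt}) \cong x^{\wedge}_{\ctt\corb F_{\ast}}$. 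This is precisely the ``wrap-around'' that collapses the two distinct ends of the upstairs resolution into the single indecomposable $x^{\wedge}_{\ctt\corb F_{\ast}}$, producing the asserted four-term sequence.

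It then remains to verify minimality and to treat the injective case. Upstairs the resolution is minimal, so every differential is a radical map; to see that this persists downstairs I would note that each $E$, $E'$ arises as the middle term of an exchange conflation whose image in $\cc_Q$ lies in $\add(\ct/F\setminus\{T_i\})$, so $p(x^{\wedge}) = T_i^{\wedge}$ is not a direct summand of $E^{\wedge}_{\ctt\corb F_{\ast}}$ nor of $E'^{\wedge}_{\ctt\corb F_{\ast}}$. Hence $\pi$ sends the radical differentials to radical differentials and minimality is preserved, even though the first and last projectives coincide. The minimal injective coresolution is obtained in the same way, applying the evident dual of \thref{projective resolution} --- legitimate by the earlier remark that all our statements adapt verbatim to Gorenstein injectives, and compatible with the product defining $\pi$ and the morphism spaces of $\ctt\corb F_{\ast}$ --- to the injective coresolution of \thref{resolutions upstairs 2}; here the identification $p(F(x)^{\vee}) \cong p(x^{\vee})$ again merges the two ends into $x^{\vee}_{\ctt\corb F_{\ast}}$. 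The main (and essentially the only) obstacle is the bookkeeping of these orbit identifications together with the check that no accidental isomorphism among the middle terms $E^{\wedge}, E'^{\wedge}$ spoils minimality; everything else is a formal consequence of \thref{projective resolution} and of $\pi$ preserving simples.
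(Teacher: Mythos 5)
Your proposal is correct and matches the paper's own (largely implicit) argument: the paper obtains this corollary exactly by applying $\pi$ to the (co)resolutions of \thref{resolutions upstairs 2}, invoking \thref{projective resolution} and the fact that $\pi$ preserves simples, just as you do. Your additional observations --- the wrap-around identification $\pi(F^{-1}(x)^{\wedge}_{\ctt}) \cong x^{\wedge}_{\ctt \corb F_{\ast}}$ coming from $p(F^{-1}(x)^{\wedge}) \cong p(x^{\wedge})$, and the preservation of minimality because no $F$-translate of $x^{\wedge}$ occurs as a summand of $E^{\wedge}$ or $E'^{\wedge}$ --- are precisely the details the paper leaves to the reader, and you have supplied them correctly.
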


\subsection{A distinguished labeling of $\ind(\eafast)$} From now on we assume $Q$ is a bipartite orientation of a Dynkin diagram $\Delta$.
Let $W$ be the Weyl group associated to $\Phi^{\Delta}$ and $s_i\in W$ be the reflection associated to the simple root $\alpha_i$. Following \cite{associahedra} we define two bijections $\tau_{\pm}:\Phi_{\geq-1} \to \Phi_{\geq-1}$ by the formula
\begin{equation}
\label{eq:tau-action}
\tau_\varepsilon (\alpha) =
\begin{cases}
\alpha & \text{if $\alpha = -\alpha_j$ with $\varepsilon(j) = -\varepsilon$;} \\[.05in]
t_\varepsilon (\alpha)
 & \text{otherwise}
\end{cases}
\end{equation}
where
\begin{equation}
t_\varepsilon = \prod_{\varepsilon(k)=\varepsilon} s_k.
\end{equation}
These elements are well defined because the reflections associated to vertices having the same sign mutually commute. It follows from the same fact that each $t_\varepsilon$ is an involution which makes it evident that each $\tau_{\varepsilon}$ is bijective. 

\begin{notation}
Let $i$ be a vertex of $Q$. Denote by $P(i)$, $I(i)$ and $S(i)$ the projective, injective and simple module associated to $i$, respectively. We consider the objects of $\mod(kQ)$ as objects in $\cd_Q$ via the canonical embedding $\mod(kQ)\to \cd_Q$ which sends a module to a complex concentrated in degree 0.
\end{notation}

\begin{remark}
\thlabel{tau on roots}
Let $\ind(kQ)$ denote the set of indecomposable right $kQ$-modules and $\Phi_{>0}$ be the set of positive roots. The function $\ind(kQ)\to \Phi_{>0}$ given by
$$
M\mapsto \sum_{i=1}^n d_i\alpha_i,
$$
where $\dimv(M)=(d_1,\dots,d_n)$ is the dimension vector of $M$, is a bijection. We denote by $M_{\alpha}$ the indecomposable module corresponding to $\alpha\in \Phi_{>0}$. It is well known that the set of indecomposable object of $\cc_Q$ is the union of the set of indecomposable $kQ$-modules and the set $\lbrace \Sigma P(i) \rbrace^n_{i=1}$ formed by the indecomposable projective $kQ$-modules shifted by $\Sigma$. Therefore, the bijection $\ind(kQ)\overset{\sim}{\to} \Phi_{>0}$ can be extended to a bijection $\ind(\cc_Q)\to \Phi_{\geq -1}$ by sending the object $\Sigma P(i)=\tau P(i)$ to the negative simple root $-\alpha_i$. In particular, the suspension functor $\Sigma = \tau:\cc_Q \to \cc_Q$ induces a bijection (which we still call) $\tau: \Phi_{\geq-1} \to \Phi_{\geq-1}$. Moreover, it can be verified that $\tau_{-}\circ \tau_{+}=\tau$.
\end{remark}

For each vertex $i$ of $Q$ let $C^i:\mod(kQ)\to \mod(k\mu_i(Q))$ be the \emph{Bernstein-Gelfand-Ponomarev reflection functor} (\cf \cite{BGP}). The functor $C_i$ induces an autoequivalence 
\begin{equation*}
C^i_{\cd}:\cd_Q \to \cd_Q
\end{equation*}
sending the simple module $S(i)$ to $\Sigma^{-\varepsilon(i)}S(i)$. Let $C^{\varepsilon}_{\cd}$ be the composition of all the reflection functors $C^i_{\cd}$ such that $\varepsilon(i)=\varepsilon$. The following is a well known result.

\begin{lemma}
\thlabel{on coxeter functors}
The functors $C^{\varepsilon}_{\cd}$ satisfy the following properties:
\begin{itemize}
\item $C^{\varepsilon}_{\cd} \circ C^{\varepsilon}_{\cd}\cong Id$,
\item $\Sigma \circ C^{\varepsilon}_{\cd} \cong C^{\varepsilon}_{\cd}\circ \Sigma$,
\item $C_{\cd}^{\varepsilon}(P(i))=\Sigma^{-\varepsilon}I(i)$,
\item if $\alpha$ is not a simple root, then $C_{\cd}^{\varepsilon}(M_{\alpha})=M_{\tau_{\varepsilon}(\alpha)}$.
\end{itemize}
\end{lemma}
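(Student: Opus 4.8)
The plan is to deduce the four properties from the classical theory of Bernstein--Gelfand--Ponomarev reflection functors together with the bipartite combinatorics, reducing everything to single-vertex reflections whenever possible. The first observation I would make is that, since $Q$ is bipartite, any two vertices of the same sign are non-adjacent; consequently the simple reflections $s_k$ with $\varepsilon(k)=\varepsilon$ commute in $W$, and the corresponding derived reflection functors $C^k_{\cd}$ have ``disjoint support'' and commute up to isomorphism. This shows at once that $C^{\varepsilon}_{\cd}=\prod_{\varepsilon(k)=\varepsilon}C^k_{\cd}$ is well defined independently of the order of composition, and it reduces the first bullet to the single-vertex case: each $C^k_{\cd}$ realizes the involution $s_k$ and is itself an involution up to natural isomorphism (the derived reflection at a source is quasi-inverse to the derived reflection at the resulting sink), so that by commutativity $(C^{\varepsilon}_{\cd})^2\cong\prod_k (C^k_{\cd})^2\cong \mathrm{Id}$.

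The second bullet is then immediate: $C^{\varepsilon}_{\cd}$ is a triangle functor, being a composite of derived tensor products with tilting complexes, and every triangle functor carries a canonical isomorphism with its conjugate by the suspension; hence $\Sigma\circ C^{\varepsilon}_{\cd}\cong C^{\varepsilon}_{\cd}\circ\Sigma$.

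For the fourth bullet I would argue first on the Grothendieck group and then lift to objects. A reflection functor acts on classes in $K_0(\cd_Q)$, identified with the root lattice, by the corresponding element of $W$; thus $C^{\varepsilon}_{\cd}$ acts by $t_\varepsilon=\prod_{\varepsilon(k)=\varepsilon}s_k$. Using the bijection $\ind(kQ)\iso\Phi_{>0}$ of \thref{tau on roots}, it then suffices to check that for a non-simple positive root $\alpha$ the functor sends $M_\alpha$ to an honest module, rather than a shift of one, whose class is $t_\varepsilon(\alpha)$. The key point is that $t_\varepsilon(\alpha)$ is again positive: the support of a root is connected while vertices of equal sign are non-adjacent, so a non-simple root must be supported on both colours; since $t_\varepsilon$ leaves the coefficients on the colour $-\varepsilon$ unchanged, $t_\varepsilon(\alpha)$ has a strictly positive coefficient there and is therefore a positive root. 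As $\alpha$ is not a negative simple root we have $\tau_\varepsilon(\alpha)=t_\varepsilon(\alpha)$ by \eqref{eq:tau-action}, and so $C^{\varepsilon}_{\cd}(M_\alpha)=M_{\tau_\varepsilon(\alpha)}$.

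The third bullet is the delicate one, and I expect it to be the main obstacle, since it is exactly here that the suspensions must be tracked carefully. My strategy is to compare $C^{\varepsilon}_{\cd}$ with the Auslander--Reiten translation through the Serre functor $\mathbb{S}=\Sigma\circ\tau$, which satisfies $\mathbb{S}(P(i))=I(i)$ and hence $\tau(P(i))=\Sigma^{-1}I(i)$. Working in the mesh category $k(\Z Q)\cong\ind(\cd_Q)$, the projectives $P(i)$ are the ``boundary'' objects, and one computes the image of each such vertex under the combinatorial automorphism induced by $C^{\varepsilon}_{\cd}$ directly from the exact sequences defining the reflection functors. The normalization $C^k_{\cd}(S(k))=\Sigma^{-\varepsilon(k)}S(k)$ is precisely what resolves the ambiguity of suspension arising from the identification $\cd_{\mu_i Q}\cong\cd_Q$, and feeding it into this computation yields $C^{\varepsilon}_{\cd}(P(i))=\Sigma^{-\varepsilon}I(i)$. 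The care needed to match the shift on both sides — in particular, to confirm that the identification of the reflected derived category with $\cd_Q$ contributes exactly the factor $\Sigma^{-\varepsilon}$ and nothing more — is the principal technical burden of the argument.
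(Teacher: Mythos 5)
The first thing you should know is that the paper contains no proof of this lemma at all: it is introduced by the sentence ``The following is a well known result'' and is never argued, so your proposal can only be judged on its own merits rather than against the author's argument. On its merits, it has genuine gaps, and the decisive one is your first step. You assert that each single reflection $C^k_{\cd}$, as an autoequivalence of $\cd_Q$, is an involution, on the grounds that the derived reflection at a source is quasi-inverse to the derived reflection at the resulting sink. But that quasi-inverse pair consists of functors $\cd_Q\to\cd_{\mu_k Q}$ and $\cd_{\mu_k Q}\to\cd_Q$, whereas $C^k_{\cd}$ is the reflection composed with a chosen identification $\cd_{\mu_k Q}\simeq\cd_Q$, and $(C^k_{\cd})^2$ composes that identification with itself, not with its inverse; it is not the composite of the source reflection with the sink reflection. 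In fact $C^k_{\cd}$ provably fails to be an involution: the paper normalizes $C^k_{\cd}(S(k))\cong\Sigma^{-\varepsilon(k)}S(k)$, and since $C^k_{\cd}$ commutes with $\Sigma$ (your own second bullet), one gets $(C^k_{\cd})^2(S(k))\cong\Sigma^{-2\varepsilon(k)}S(k)\not\cong S(k)$ in $\cd_Q$. So the reduction of the first bullet to single vertices cannot work, and the commutativity of the factors does not repair it.

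There are two further gaps, and behind them a defect of the statement itself that no face-value argument can overcome. For the fourth bullet, knowing that the class of $C^{\varepsilon}_{\cd}(M_\alpha)$ in $K_0$ is the positive root $t_\varepsilon(\alpha)$ does not show the image is a module: every indecomposable of $\cd_Q$ is $\Sigma^n M$ for a module $M$, and $[\Sigma^{2m}M]=[M]$, so positivity only excludes odd shifts; what is needed is the vanishing theorem of \cite{BGP} (the derived reflection of an indecomposable module with no summand $S(k)$ is concentrated in degree $0$, applied also to the intermediate images), which you never invoke. The third bullet you explicitly leave open (``the principal technical burden''), so it is simply not proved --- and the difficulty you sense there is real. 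Test the lemma on $Q\colon 1\to 2$: whichever module convention is used, one of the two indecomposable projectives is the non-simple module $M_{\alpha_1+\alpha_2}$, say $P(i_0)$. The third bullet forces $C^{\varepsilon}_{\cd}(P(i_0))\cong\Sigma^{-\varepsilon}I(i_0)$, a nonzero shift of a module and hence not a module, while the fourth forces $C^{\varepsilon}_{\cd}(M_{\alpha_1+\alpha_2})\cong M_{\tau_\varepsilon(\alpha_1+\alpha_2)}$, a module. Thus the four bullets, read literally for a covariant triangle autoequivalence of $\cd_Q$ with the stated normalization, are mutually inconsistent. The genuinely well-known results being gestured at here are statements about reflection functors relating the derived (or cluster) categories of \emph{different} orientations, i.e.\ the categorification of the maps $\tau_\pm$ of \cite{associahedra} due to Marsh--Reineke--Zelevinsky and to Zhu; any complete proof must first restate the lemma in such a form (for instance, restricting the third bullet and interpreting the involutivity after passage to $\cc_Q$) before it can be established, and your proposal, like the lemma, does not confront this.
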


\begin{corollary}
\thlabel{tau and ext}
Let $M_{\alpha}$ be the indecomposable $kQ$-module corresponding to $\alpha\in \Phi_{>0}$. Let $i$ be a vertex of $Q$. If $i$ is a source then
\begin{equation*}
[\tau_{+}(\alpha):\alpha_i]=\dim \Ext_{kQ}^1(M_{\alpha},S(i)).
\end{equation*}
If $i$ is a sink of $Q$ then
\begin{equation*}
[\tau_{-}(\alpha):\alpha_i]=\dim \Ext_{kQ}^{-1}(M_{\alpha},S(i)).
\end{equation*}
\begin{proof}
We know that $[\tau_{\varepsilon}(\alpha):\alpha_i]=
\dim \Hom (M_{\tau_{\varepsilon}(\alpha)},I(i))$. By \thref{on coxeter functors} we have the following isomorphisms
\begin{align}
\Hom (M_{\tau_{\varepsilon}(\alpha)},I(i))&\cong \Hom (M_{\tau_{\varepsilon}(\alpha)}, \Sigma^\varepsilon C^{\varepsilon}_{\cd}(P(i))) \nonumber \\
&= \Hom (C^{\varepsilon}_{\cd} (M_{\tau_{\varepsilon}(\alpha)}), \Sigma^\varepsilon P(i)) \nonumber \\
&= \Hom (M_{\tau_{\varepsilon} \tau_{\varepsilon}(\alpha)}, \Sigma^\varepsilon P(i)). \nonumber \\
&=\Hom (M_{\alpha}, \Sigma^\varepsilon P(i)). \nonumber
\end{align}
If $i$ is a source then $P(i)=S(i)$. If $i$ is a sink then $P(i)=S(i)$. The claim follows.
\end{proof}
\end{corollary}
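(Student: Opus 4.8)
The plan is to express the coefficient $[\tau_\varepsilon(\alpha):\alpha_i]$ as the dimension of a $\Hom$-space in $\cd_Q$, transport that space across the Coxeter autoequivalence $C^\varepsilon_\cd$ of \thref{on coxeter functors}, and read off the resulting $\Ext$-group. The only inputs are the formal properties of $C^\varepsilon_\cd$ and the bijection $\ind(\cc_Q)\to\Phi_{\geq-1}$ of \thref{tau on roots}.

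First I would record the standard identity $[\dimv N:\alpha_i]=\dim\Hom_{kQ}(N,I(i))$, valid for any finite-dimensional module $N$ because $\Hom_{kQ}(N,I(i))\cong D(N_i)$ is the dual of the component of $N$ at $i$. Applying it to $N=M_{\tau_\varepsilon(\alpha)}$ gives $[\tau_\varepsilon(\alpha):\alpha_i]=\dim\Hom_{\cd_Q}(M_{\tau_\varepsilon(\alpha)},I(i))$. Next I would rewrite $I(i)$ by means of \thref{on coxeter functors}: from $C^\varepsilon_\cd(P(i))=\Sigma^{-\varepsilon}I(i)$ we get $I(i)\cong\Sigma^{\varepsilon}C^\varepsilon_\cd(P(i))$, and since $C^\varepsilon_\cd$ is an equivalence commuting with $\Sigma$ and satisfying $C^\varepsilon_\cd\circ C^\varepsilon_\cd\cong\mathrm{Id}$, applying $C^\varepsilon_\cd$ inside the $\Hom$ yields
\[
\Hom_{\cd_Q}(M_{\tau_\varepsilon(\alpha)},I(i))\cong\Hom_{\cd_Q}\bigl(C^\varepsilon_\cd M_{\tau_\varepsilon(\alpha)},\,\Sigma^{\varepsilon}P(i)\bigr).
\]
Because $\tau_\varepsilon$ is an involution and $C^\varepsilon_\cd$ realises it on the non-exceptional roots, the left-hand argument collapses to $M_{\tau_\varepsilon\tau_\varepsilon(\alpha)}=M_\alpha$. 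Finally, at a vertex $i$ of sign $\varepsilon$ the object $P(i)$ agrees with the simple $S(i)$ up to the identifications forced by $i$ being a source or a sink, so $\dim\Hom_{\cd_Q}(M_\alpha,\Sigma^{\varepsilon}P(i))=\dim\Ext^{\varepsilon}_{kQ}(M_\alpha,S(i))$, which reads as $\Ext^1$ for a source ($\varepsilon=+$) and $\Ext^{-1}$ for a sink ($\varepsilon=-$).

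The main obstacle is the bookkeeping hidden in the last step together with the exceptional roots. The reflection-functor identity $C^\varepsilon_\cd(M_\beta)=M_{\tau_\varepsilon(\beta)}$ of \thref{on coxeter functors} breaks down precisely for those $\beta$ with $\tau_\varepsilon(\beta)\notin\Phi_{>0}$ (a negative simple root), where $M_{\tau_\varepsilon(\beta)}$ ceases to be a genuine module and a shift intervenes; these finitely many degenerate values of $\alpha$ must be inspected by hand, and it is exactly there that the precise match between the sign $\varepsilon(i)$, the identification of $S(i)$ with the extremal projective or injective indecomposable, and the direction of the shift $\Sigma^{\pm1}$ has to be pinned down. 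Once the source case is settled, the sink case can alternatively be deduced from it by the duality $Q\mapsto Q^{\mathrm{op}}$ (equivalently by applying $D$), which swaps sources and sinks, interchanges $P(i)$ and $I(i)$, and replaces $\Sigma$ by $\Sigma^{-1}$, thereby turning the $\Ext^1$-statement into the $\Ext^{-1}$-statement.
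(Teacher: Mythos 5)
Your proof is correct and follows essentially the same route as the paper's: express $[\tau_\varepsilon(\alpha):\alpha_i]$ as $\dim\Hom(M_{\tau_\varepsilon(\alpha)},I(i))$, rewrite $I(i)\cong\Sigma^{\varepsilon}C^{\varepsilon}_{\cd}(P(i))$, transport across the involutive Coxeter equivalence to land on $\Hom(M_\alpha,\Sigma^{\varepsilon}P(i))$, and identify $P(i)$ with $S(i)$ at a vertex of sign $\varepsilon$ to read off $\Ext^{\pm 1}$. If anything, you are more careful than the paper, which applies $C^{\varepsilon}_{\cd}(M_\beta)=M_{\tau_\varepsilon(\beta)}$ without flagging the exceptional (simple-root) cases that you explicitly note must be checked by hand.
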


Let $C=\Z Q_0$ be the maximal admissible configuration (so $\cR_C=\cR$). We identify the objects of $\cc_Q$ with the non-projective objects of $\eafast$. In particular, the bijection $\ind(\cc_Q)\to \Phi_{\geq -1}$ of \thref{tau on roots} can be used to label the indecomposable non-projective objects of $\eafast$. Indeed, we denote by $X_{\alpha}$ the non-projective indecomposable object of $\eafast$ corresponding to $\alpha\in \Phi_{\geq-1}$ under the bijection. 
We can extend this labeling to the indecomposable projective objects of $\eafast$ as follows: let $P$ be an indecomposable projective object of $\eafast$. Then $P=p(y^{\wedge})$ for some frozen vertex $y$ of $\Z \widetilde{Q}$. 
The indecomposable non-projective object $p(\sigma(y)^{\wedge})$ is isomorphic to $X_{\alpha}$ for some $\alpha \in \Phi_{\geq -1}$. We label $P$ by $\tau_+(\alpha)$ and write $P_{\tau_+(\alpha)}$. Note that the indecomposable projective object $p(\sigma^{-1}(x)^{\wedge})$ is labeled by $\tau_-(\alpha)$ because $\tau_-(\alpha)=\tau_+\circ \tau^{-1}(\alpha)$.
\begin{example}
Let $Q: 1\to 2 \leftarrow 3$ be a bipartite quiver of type $A_3$. There are two $\tau$-orbits in $\Phi_{\geq -1}$ that can be visualized as follows:
\begin{equation*}
\xymatrix{
-\alpha_{1} \ar@(dr,dl)^-{\tau_-}[] \ar@{<->}^-{\tau_+}[r] & \alpha_1  \ar@{<->}^-{ \tau_-}[r] &  \alpha_1 + \alpha_2 \ar@{<->}^-{\tau_+}[r] & 
 \alpha_2 + \alpha_3 \ar@{<->}^-{\tau_-}[r] & \alpha_3 \ar@{<->}^-{\tau_+ }[r] & -\alpha_3 \ar@(dl,dr)_-{\tau_-}[] 
}
\end{equation*}
\begin{equation*}
\xymatrix{
-\alpha_2  \ar@{<->}^-{\tau_-}[r] \ar@(dr,dl)^-{\tau_+}[] &  \alpha_2 \ar@{<->}^-{\tau_+}[r] &
\alpha_1 + \alpha_2 + \alpha_3. \ar@(dl,dr)_-{\tau_-}[] 
}
\end{equation*}
In Figure \ref{A_3 universal labeled} below we have represented the AR quiver of the category $\eafast$. Its vertices are labeled by the rule described above.
\begin{figure}[htbp]
\[
\xymatrix@=0.35cm@!{
X_{\alpha_{1}}\ar[r] \ar@{--}[d] \ar[dr]& P_{\alpha_1+\alpha_2} \ar[r]  & X_{\alpha_2 + \alpha_3} \ar[r]  \ar[dr]  & P_{\alpha_3} \ar[r] & X_{-\alpha_3} \ar[r] \ar[dr] & P_{-\alpha_3} \ar[r] &X_{\alpha_3}   \ar@{--}[d]   \\
P_{\alpha_2} \ar[r] \ar@{--}[d]& X_{\alpha_1 + \alpha_2 + \alpha_3} \ar[dr]  \ar[r] \ar[ur] & P_{\alpha_1+\alpha_2+\alpha_3} \ar[r] & X_{\alpha_2} \ar[r] \ar[ur] \ar[dr] & P_{-\alpha_2} \ar[r] & X_{-\alpha_2}\ar[dr] \ar[r] \ar[ur] & P_{\alpha_2} \ar@{--}[d] \\
 X_{\alpha_3}  \ar[r] \ar[ur] & P_{\alpha_2 + \alpha_3} \ar[r] & X_{\alpha_1+\alpha_2} \ar[r] \ar[ur] & P_{\alpha_1} \ar[r] & X_{-\alpha_1} \ar[r] \ar[ur] & P_{-\alpha_1} \ar[r] &  X_{\alpha_1}&
  }
\]
\caption{The quiver of $\ind(\eafast)$ for $Q$ of type $A_3$.}
\label{A_3 universal labeled}
\end{figure}
\end{example}

\subsection{Proof of \thref{categorification}} We are ready to prove our second main result. Let $C=\Z Q_0$ be the maximal admissible configuration. Throughout this subsection we let $T$ be the object $\bigoplus_{i\in Q_0} X_{-\alpha_i} \in \cc_Q$. Clearly, $T$ is a cluster-tilting object whose quiver is $Q$. Let $\ct / F$ be the associated cluster-tilting subcategory of $\cc_Q$ (see \thref{cts}). Note that the exchange triangles associated to $X_{-\alpha_i}$ and $\ct$ are of the form
\\
\begin{center}
\begin{tabular}{lllll}
$\phantom{l} X_{\alpha_i} \longrightarrow B \longrightarrow X_{-\alpha_i} \to \Sigma X_{\alpha_i}$
&
&
and
&
&
$ X_{-\alpha_i} \longrightarrow B' \longrightarrow X_{\alpha_i} \to \Sigma X_{-\alpha_i}$.
\end{tabular}
\end{center}

\begin{proposition}
\thlabel{exch sequences general}
Let $i$ be a vertex of $Q$.
\begin{itemize}
\item[$(i)$] If $i$ is a source then  the exchange conflations associated to $X_{-\alpha_i}$ and $\ctt \corb F_*$ have the form:
\begin{equation}
\label{first ses}
0 \longrightarrow X_{-\alpha_i} \longrightarrow   \bigoplus_{i\to j}X_{-\alpha_j} \oplus  P_{-\alpha_i}  \longrightarrow X_{\alpha_i} \longrightarrow 0
\end{equation}
and
\begin{equation}
\label{second ses}
0 \longrightarrow X_{\alpha_i} \longrightarrow \bigoplus_{\alpha \in \Phi_{>0}} P^{[\alpha:\alpha_i]}_{\alpha} \longrightarrow X_{-\alpha_i} \longrightarrow 0.
\end{equation}
\end{itemize}
\begin{itemize}
\item[$(ii)$] If $i$ is a sink then  the exchange conflations associated to $X_{-\alpha_i}$ and $\ctt \corb F_*$ have the form:
\begin{equation*}
0 \longrightarrow X_{-\alpha_i} \longrightarrow\bigoplus_{\alpha \in \Phi_{>0}} P^{[\alpha:\alpha_i]}_{\alpha} \longrightarrow X_{\alpha_i} \longrightarrow 0
\end{equation*}
and
\begin{equation*}
0 \longrightarrow X_{\alpha_i} \longrightarrow \bigoplus_{j\to i}X_{-\alpha_j} \oplus P_{-\alpha_i} \longrightarrow X_{-\alpha_i} \longrightarrow 0.
\end{equation*}
\end{itemize}
\end{proposition}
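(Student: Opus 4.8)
The plan is to build each exchange conflation in two stages: first read off the \emph{non-projective} part of the middle term from the exchange triangle in $\cc_Q$, and then determine the projective--injective (i.e.\ coefficient) summands that must be adjoined when that triangle is lifted to a conflation in $\eafast$. By \thref{cts} the cluster-tilting subcategories of $\cc_Q$ and $\ctt\corb F_*$ are in bijection, so the exchange conflations of $\ctt\corb F_*$ lift the exchange triangles of $\add(T)$ in $\cc_Q$, and by the definition of the cluster structure they differ from those triangles only by projective--injective summands, chosen so that the deflation is a minimal right $\add(\ctt\corb F_*\setminus\{X_{-\alpha_i}\})$-approximation and the inflation a minimal left one.

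First I would record the exchange triangles in $\cc_Q$. Since $T=\bigoplus_{i\in Q_0}X_{-\alpha_i}$ has quiver $Q$ and $X_{-\alpha_i}^{*}=X_{\alpha_i}$, mutation at $X_{-\alpha_i}$ produces, when $i$ is a source, the triangle $X_{-\alpha_i}\to\bigoplus_{i\to j}X_{-\alpha_j}\to X_{\alpha_i}\to\Sigma X_{-\alpha_i}$ (the middle term collects the arrows out of the source $i$) together with a degenerate triangle $X_{\alpha_i}\to 0\to X_{-\alpha_i}\to\Sigma X_{\alpha_i}$ expressing $X_{-\alpha_i}=\Sigma X_{\alpha_i}$ (using $X_{\alpha_i}=S(i)=P(i)$ for a source). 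For $i$ a sink the roles of the two triangles are interchanged and the non-degenerate middle term becomes $\bigoplus_{j\to i}X_{-\alpha_j}$. This already gives the non-projective summands appearing in \eqref{first ses} and in part $(ii)$.

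The heart of the argument is the degenerate triangle, whose lift is a conflation $0\to X_{\alpha_i}\to E\to X_{-\alpha_i}\to 0$ (source case) with $E$ entirely projective--injective; by minimality $E$ is simultaneously the injective hull of $X_{\alpha_i}$ and the projective cover of $X_{-\alpha_i}$ in $\eafast$. I would compute the multiplicity of each indecomposable projective--injective $P_\alpha$ in $E$ as the appropriate socle/top dimension and identify it, through the mesh structure of the Auslander--Reiten quiver of $\eafast$ relating $P_\alpha$ to its $\sigma$-neighbour $X_{\tau_+(\alpha)}$ (per the labelling of $\ind(\eafast)$), with a morphism space in $\cc_Q$. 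Concretely this multiplicity equals $\dim\Hom_{\cc_Q}(X_{\tau_+(\alpha)},\Sigma X_{\alpha_i})=\dim\Ext^1_{kQ}(M_{\tau_+(\alpha)},S(i))$, the second cluster-category $\Ext$-summand vanishing because $S(i)=P(i)$ is projective for a source. By \thref{tau and ext} this is exactly $[\tau_+(\tau_+(\alpha)):\alpha_i]=[\alpha:\alpha_i]$ for $\alpha\in\Phi_{>0}$, so $E\cong\bigoplus_{\alpha\in\Phi_{>0}}P_\alpha^{[\alpha:\alpha_i]}$, which is \eqref{second ses}; the simple-root case $\alpha=\alpha_i$ (where $\tau_+(\alpha_i)=-\alpha_i$ is not positive) is checked by hand, giving multiplicity $[\alpha_i:\alpha_i]=1$. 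The sink case is formally identical after replacing $\tau_+$, $\varepsilon=+$ and $\Ext^1$ by $\tau_-$, $\varepsilon=-$ and $\Ext^{-1}$ in \thref{tau and ext}. For the non-degenerate triangle I only need the projective summand forced into the minimal approximation of $X_{\alpha_i}$: since $\tau_+(\alpha_i)=-\alpha_i$ for a source (resp.\ $\tau_-(\alpha_i)=-\alpha_i$ for a sink), the labelling identifies $P_{-\alpha_i}$ as the unique indecomposable projective--injective whose $\sigma$-neighbour is $X_{\alpha_i}$, so it contributes a single summand and a dimension count shows no other projective appears, yielding \eqref{first ses} and its sink analogue. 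Finally I would confirm minimality of both maps by checking that splicing the resulting conflations reproduces the minimal projective resolutions of the simple $\ctt\corb F_*$-modules of \thref{resolution of simples 1} and \thref{resolution of simples 2}.

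The step I expect to be the main obstacle is the multiplicity computation for the all-projective middle term: passing from the categorical datum ``multiplicity of $P_\alpha$ in the projective cover of $X_{-\alpha_i}$'' to the morphism space $\Hom_{\cc_Q}(X_{\tau_+(\alpha)},X_{-\alpha_i})$ requires a careful analysis of the socles and tops of the projective--injective objects of $\eafast$ through the standardness (mesh) structure of their Auslander--Reiten quiver, and it is precisely at this point that \thref{tau and ext} and the universal-coefficient exponents $[\alpha:\alpha_i]$ of \thref{th universal coefficients} enter the picture.
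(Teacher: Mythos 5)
Your proposal is correct in outline and, for the crucial sequence \eqref{second ses}, it is essentially the paper's own argument: the degenerate triangle $X_{\alpha_i}\to 0\to X_{-\alpha_i}\to\Sigma X_{\alpha_i}$ (valid since $\Sigma X_{\alpha_i}\cong X_{-\alpha_i}$ for a source) forces an entirely projective--injective middle term, whose multiplicities are $\dim\Ext^1_{kQ}(M_{\tau_+(\alpha)},S(i))=[\alpha:\alpha_i]$ by \thref{tau and ext} and $\tau_+^2=\operatorname{id}$, including your observation that the second summand of the cluster-category $\Ext^1$ vanishes because $S(i)=P(i)$ is projective. The divergences are in how the projective summands get pinned down. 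For \eqref{first ses} the paper does not lift the stable exchange triangle and then hunt for its projective part: it projects the mesh conflation $0\to x^{\wedge}\to\bigoplus_{x\to y}y^{\wedge}\oplus\sigma^{-1}(x)^{\wedge}\to\tau^{-1}(x)^{\wedge}\to 0$ of $\ce$ (available by standardness, \thref{description of gpr}) along the exact projection $p$, which delivers the middle term $\bigoplus_{i\to j}X_{-\alpha_j}\oplus P_{-\alpha_i}$ at once, with minimality free since AR conflations are minimal. Your alternative --- ``the labelling identifies $P_{-\alpha_i}$ \dots\ and a dimension count shows no other projective appears'' --- is not an argument as it stands; to close it you would need that the projective part of the minimal lift is a direct summand of the injective hull of $X_{-\alpha_i}$, which is the indecomposable object $p(\sigma^{-1}(x)^{\wedge})=P_{-\alpha_i}$, and that this summand is nonzero (for instance because the irreducible map $X_{-\alpha_i}\to P_{-\alpha_i}$ cannot factor through $\bigoplus_{i\to j}X_{-\alpha_j}$); the mesh conflation sidesteps all of this. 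Likewise, the ``main obstacle'' you flag for \eqref{second ses} --- converting multiplicities in the projective cover into morphism spaces in $\cd_Q$ --- is precisely the content of the lemma preceding \thref{resolutions upstairs 1}, which constructs $P_{x,\ct}$ with multiplicities $\dim\cd_Q/\ct(y,x)$ via an $\mathrm{R}\Hom$ computation against the simple modules, so you can cite it rather than redo the socle/top analysis through the mesh category. Finally, one caution: verifying minimality by comparison with \thref{resolution of simples 2} is circular, since that corollary is itself obtained by splicing the exchange conflations you are constructing; minimality is instead automatic here (an AR conflation is minimal, and a deflation from a projective object with indecomposable non-projective kernel is a projective cover).
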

\begin{proof}
Let $i$ be a source of $Q$ and $x\in (\Z \widetilde{Q})_0$ be a non-frozen vertex such that 
\begin{equation*}
p(x^{\wedge})=X_{-\alpha_i}.
\end{equation*} 
In particular, 
\begin{equation*}
p(\sigma^{-1}(x)^{\wedge})=P_{\tau_-(-\alpha_i)}=P_{-\alpha_i}
 \mbox{  \ \ \   and \ \ \ }
 p(\tau^{-1}(x)^{\wedge})=X_{\alpha_i}. 
\end{equation*}
The mesh relation starting at $x$ gives a conflation
\begin{equation*}
0 \to x^{\wedge} \to \bigoplus_{x\to y} y^{\wedge} \oplus (\sigma^{-1}(x))^{\wedge} \to (\tau^{-1}(x))^{\wedge}\to 0
\end{equation*}
in $\ce$, where the sum ranges over the set of arrows of $\Z \widetilde{Q}$ whose tail is $x$ and whose head is a non-frozen vertex $y$. By the exactness of $p$, there is a conflation
\begin{equation}
\label{mesh relation on proof}
0 \to X_{-\alpha_i} \to \bigoplus_{x\to y} p(y^{\wedge}) \oplus P_{-\alpha_i} \to X_{\alpha_i}\to 0
\end{equation}
in $\eaf$. By construction, the image of \eqref{mesh relation on proof} in the stable category $\ul{\eafast} \cong \cd_Q/F$ is the image of the mesh relation in $\cd_Q$ starting at $\Sigma P(i)$. Thus 
\begin{equation*}
 \bigoplus_{x\to y} p(y^{\wedge}) =  \bigoplus_{i\to j\in Q_1} \Sigma P(j).
 \end{equation*}
Since $ \Sigma P(j)$ corresponds to $-\alpha_j$ under the bijection $\ind(\cc_Q)\to \Phi_{\geq -1}$ the sequence \eqref{mesh relation on proof} identifies with the sequence \eqref{first ses}. 

Since $i$ is a source, there is a triangle
\[
X_{\alpha_i}\to 0 \to X_{-\alpha_i}\to X_{-\alpha_i}
\]
in $\cc_Q$. Therefore, in this case, the exact sequence of $\cR$-modules in (\ref{resolution of x D/T}) is given by
\begin{equation*}
0 \rightarrow (\Sigma^{-1}x)^{\wedge} \rightarrow P_{x,\ct} \rightarrow x^{\wedge} \rightarrow x^{\wedge}_{\cd/\ct} \rightarrow 0,
\end{equation*}
and gives rise to the conflation
\begin{equation}
\label{ses in eaf}
0\to {\Sigma^{-1}x}^{\wedge}_{\ce} \to P_{x,\ct} \to x^{\wedge}_{\ce}\to 0
\end{equation}
in $\ce$. We apply the canonical projection $p:\ce \to \eafast$ to this conflation to a obtain the conflation
\begin{equation}
\label{ses in eaf}
0\to \Sigma^{-1}X_{-\alpha_i} \to p(P_{x,\ct}) \to X_{-\alpha_i}\to 0
\end{equation}
of $\eafast$. We claim that \eqref{second ses} corresponds to \eqref{ses in eaf}. Indeed: since $C=\Z Q_0$ and
\begin{equation*}
\Sigma^{-1}p (x^{\wedge})= \tau^{-1}p (x^{\wedge})
\end{equation*}
in $\eafast$, then \eqref{ses in eaf} takes the form
\begin{equation*}
0 \rightarrow X_{\alpha_i} \rightarrow  \bigoplus_{y\in \Z Q} \cd_Q(y,x) \ten p(\sigma(y)^\wedge)\rightarrow X_{-\alpha_i} \rightarrow 0.
\end{equation*}
We have seen that $x=\Sigma P(i)=\Sigma S(i)$. We obtain the following isomorphisms
\begin{align}
\bigoplus_{y\in \Z Q} \cd_Q(y,x)&=\bigoplus_{\alpha \in \Phi_{>0}} \cd_Q(M_{\alpha}, \Sigma S(i)) \nonumber \\
&= \bigoplus_{\alpha \in \Phi_{>0}} \Ext_{kQ}^1(M_{\alpha}, S(i)).
\end{align}
We obtain that the multiplicity of $P_{\alpha}$ in $P_{x, \ct}$ is equal to the dimension of $\Ext^1_{kQ}(M_{\tau_{+}(\alpha)},S_i)$. 
 By \thref{tau and ext} we know that $\dim \Ext^1_{kQ}(M_{\tau_{+}(\alpha)},S_i)=[\alpha, \alpha_i]$, \ie
 \begin{equation*}
p(P(x)) = \bigoplus_{\alpha \in \Phi_{>0}} P^{[\alpha:\alpha_i]}_{\alpha}.  
 \end{equation*}
 The claim follows. We can treat similarly the case where $i$ is a sink.
\end{proof}
\begin{proof}[Proof of \thref{categorification}] $(i)$ Let $C$ be an admissible configuration. Part $(i)$ states that $\ce/F_{\ast}$ is a Frobenius 2-CY realization of a cluster algebra with coefficients of type $Q$. This is a direct consequence of Theorem II.1.6 of \cite{BIRS}. 

$(ii)$ In view of \thref{a remark on frozen quivers}, it is enough to prove that there is a cluster-tilting object of $ \eafast$ whose frozen quiver coincides with the initial coefficients defining $\ca_{Q}^{\rm univ}$ described in \thref{th universal coefficients}. We claim that
\begin{equation*}
\widetilde{T}=\bigoplus_{i\in Q_0} X_{-\alpha_i} \oplus \bigoplus_{\alpha \in \Phi_{\geq -1}}P_{\alpha}
\end{equation*}
satisfies this assertion. It is clear that $\widetilde{T}$ is a cluster-tilting object since $\bigoplus_{i\in Q_0} X_{-\alpha_i}$ is a cluster-tilting object of $\cc_Q$ . Let $\ct = \add(T)$. Denote by $i$ the vertex of $Q^{F}_{\ct}$ corresponding to the indecomposable object $X_{-\alpha_i}$ and by $\boxed{\alpha}$ the frozen vertex of $Q^F_{\ct}$ corresponding to $P_{\alpha}$, $\alpha \in \Phi_{\geq -1}$. We can describe $Q^{F}_{\ct}$ using the projective resolutions of the simple $\eafast$-modules described in \thref{resolution of simples 1} and \thref{resolution of simples 2}. To compute these resolutions it is enough to describe the exchange conflations associated to $\widetilde{T}$ given in \thref{exch sequences general}. If $i$ is a source of $Q$ then there are
\begin{itemize}
\item $[\alpha:\alpha_i]$ arrows from $\boxed{\alpha}$ to $i$ for each $\alpha\in \Phi_{>0}$,
\item one arrow from $i$ to $j$ for each arrow $i\to j $ in $Q$,
\item one arrow from $i$ to the vertex corresponding to $\boxed{-\alpha_i}$.
\end{itemize}
This is precisely the frozen quiver encoding by the initial coefficients of $\ca_{Q}^{\rm univ}$ described in \thref{th universal coefficients}.
We obtain a similar description when $i$ is a sink of $Q$. The claim follows.
\end{proof}

\section{Examples}

\begin{example}
Let $Q:1\to 2$ be a Dynkin quiver of type $A_2$.
Thus, $\varepsilon(1)=+$ and $\varepsilon(2)=-$.
By \thref{th universal coefficients}, the cluster algebra with universal coefficients of type $A_2$
is defined over the tropical semifield
\begin{equation*}
\label{eq:PP-univ-A2}
\P = \Trop
(p_{-\alpha_1},p_{-\alpha_2},p_{\alpha_1},p_{\alpha_1+\alpha_2},p_{\alpha_2}).
\end{equation*}
The cluster algebra $\ca_{A_2}^{\rm univ}$ is completely determined by the ice quiver
\begin{equation*}
\xymatrix{
& & \boxed{\alpha_2} & \\
& \boxed{\alpha_1 + \alpha_2} \ar[d] & 2 \ar[l] \ar[u] & \boxed{-\alpha_2} \ar[l] \\
\boxed{\alpha_1} \ar[r] & 1\ar[ru] \ar[r] & \boxed{-\alpha_1} & 
}
\end{equation*}
where the frozen vertex corresponding to the generator $\alpha\in \Phi_{\geq -1}$ is denoted by $\boxed{\alpha}$.

Recall that there is a bijection between the cluster variables in $\ca_{A_2}^{\rm univ}$ and the roots in $\Phi_{\geq-1}$. We let 
$x_{\alpha}$ denote the cluster variable associated to $\alpha$. Then the exchange relations in~$\ca_{A_2}^{\rm univ}$
are:
\begin{align}
\label{eq:exch-univ-A2-1}
x_{-\alpha_2}\,x_{\alpha_2} \nonumber
&= p_{-\alpha_2}\,x_{-\alpha_1}+p_{\alpha_2}\,p_{\alpha_1+\alpha_2}\,,\\[.1in] \nonumber
x_{-\alpha_1}\,x_{\alpha_1+\alpha_2} \nonumber
&= p_{\alpha_1}\,x_{\alpha_2}+ p_{-\alpha_1}\,p_{\alpha_2}\,,\\[.1in]
x_{\alpha_2}\,x_{\alpha_1} \nonumber
&= p_{\alpha_1+\alpha_2}\,x_{\alpha_1+\alpha_2}+ p_{-\alpha_1}\,p_{-\alpha_2}\,,\\[.1in]
x_{\alpha_1+\alpha_2}\,x_{-\alpha_2} \nonumber
&= p_{\alpha_2}\,x_{\alpha_1}+ p_{-\alpha_2}\,p_{\alpha_1}\,,\\[.1in]
\nonumber
x_{\alpha_1}\,x_{-\alpha_1}
&= p_{-\alpha_1}\,x_{-\alpha_2}+p_{\alpha_1}\,p_{\alpha_1+\alpha_2}\,.
\end{align}

In this case, the quiver of $\ce\corb F_*$ is the following
$$
\xymatrix{
P_{\alpha_2} \ar[r] \ar@{--}[d]  &X_{\alpha_1 + \alpha_2} \ar[r] \ar[rd] & P_{\alpha_1} \ar[r] & X_{-\alpha_1} \ar[r] \ar[rd] & P_{-\alpha_1} \ar[r] &  X_{\alpha_1} \ar@{--}[d] \\
X_{\alpha_1} \ar[r] \ar[ru] & P_{\alpha_1+\alpha_2}  \ar[r] & X_{\alpha_2} \ar[r] \ar[ru] &  P_{-\alpha_2} \ar[r] &  X_{-\alpha_2}\ar[r] \ar[ru] &   P_{\alpha_2}.
}
$$
We can verify that the exchange relations above correspond to the exchange conflations in $\ce \corb F$. For instances, the exchange relation 
\begin{equation*}
x_{\alpha_2}\,x_{\alpha_1} 
= p_{\alpha_1+\alpha_2}\,x_{\alpha_1+\alpha_2}+ p_{-\alpha_1}\,p_{-\alpha_2}
\end{equation*}
corresponds to the non-split short exact sequences
\begin{equation*}
0 \longrightarrow X_{\alpha_1} \longrightarrow  P_{\alpha_1+\alpha_2}\oplus X_{\alpha_1+\alpha_2}\longrightarrow X_{\alpha_2} \longrightarrow 0
\end{equation*}
and
\begin{equation*}
0 \longrightarrow X_{\alpha_2} \longrightarrow P_{-\alpha_1}\oplus P_{-\alpha_2} \longrightarrow X_{\alpha_1} \longrightarrow 0.
\end{equation*}
The ice quiver of
$
T=X_{-\alpha_1} \oplus X_{-\alpha_2} \oplus P_{-\alpha_1} \oplus P_{-\alpha_2} \oplus P_{\alpha_1} \oplus P_{\alpha_1+\alpha_2} \oplus P_{\alpha_2} 
$ is
\begin{equation*}
\xymatrix{
& & \boxed{P_{\alpha_2}} & \\
& \boxed{P_{\alpha_1 + \alpha_2}} \ar[d] & X_{-\alpha_2} \ar[l] \ar[u] & \boxed{P_{-\alpha_2}} \ar[l] \\
\boxed{P_{\alpha_1}} \ar[r] & X_{-\alpha_1}\ar[ru] \ar[r] & \boxed{P_{-\alpha_1}} & 
}
\end{equation*}
as expected.
\end{example}

Using \thref{Frobenius model orbit} we can see that the categorification of finite-type Grassmannian cluster algebras introduced in \cite{JKS} is equivalent to $\gpr(\cs_C)\corb F_*$ for an appropriate choice of admissible configuration $C$. This follows from the fact that these categories are standard Frobenius models of a finite-type cluster category.

\end{document}